\newcommand{\xL}{\mathcal{L}}
\newcommand{\Id}{\operatorname{\mathbf{Id}}}
\newcommand{\xH}{\mathcal{H}}
\newcommand{\xF}{\mathcal{F}} 
\newcommand{\xP}{\mathbf{Q}}
\newcommand{\xtr}[1]{\operatorname{Tr}\left(#1 \right)}
\newcommand{\xK}{\mathcal{K}}
\newcommand{\xN}{\mathbb{N}}
\newcommand{\cD}{\mathcal D}
\newcommand{\LL}{{\mathbf{\Gamma}}}
\newcommand{\Hamil}{{\bf H}}
\newcommand{\Proj}[1]{\mathbf{P}_{#1}}
\newcommand{\cA}{\mathcal A}
\newcommand{\qop}{{\bf q}}
\newcommand{\Pop}{{\bf p}}
\newcommand{\destroy}{{\bf a}}
\newcommand{\create}{{\bf a^\dag}}
\newcommand{\destroyb}{{\bf b}}
\newcommand{\createb}{{\bf b^\dag}}
\newcommand{\rN}{{\boldsymbol{\rho}_{(N)}}}
\newcommand{\Nop}{\mathbf{N}}
\newcommand{\Uop}{\mathbf{U}}
\newtheorem{lemma}{Lemma}
\newtheorem{remark}{Remark}
\newtheorem{definition}{Definition}
\newtheorem{proposition}{Proposition}
\newtheorem*{proposition*}{Proposition}
\newtheorem{corollary}{Corollary}
\newtheorem{example}{Example}
\newtheorem*{example*}{Example}
\title{A posteriori error estimates for the Lindblad master equation}
\author{Paul-Louis Etienney}
\email{paul-louis.etienney@proton.me}
\author{Rémi Robin}
\email{remi.robin@minesparis.psl.eu}
\author{Pierre Rouchon}
\email{pierre.rouchon@minesparis.psl.eu}
\affiliation{Laboratoire de Physique de l'\'Ecole Normale Supérieure, Mines Paris, Inria, CNRS, ENS-PSL, Sorbonne Universit\'{e}, PSL Research University, Paris, France}
\date{March 09, 2026}
\begin{document}
\maketitle

\begin{abstract}
We are interested in the simulation of open quantum systems governed by the Lindblad master equation in an infinite-dimensional Hilbert space. To simulate the solution of this equation, the standard approach involves two sequential approximations: first, we truncate the Hilbert space to derive a differential equation in a finite-dimensional subspace. Then, we use discrete time-step to obtain a numerical solution to the finite-dimensional evolution.

In this paper, we establish bounds for these two approximations that can be explicitly computed to guarantee the accuracy of the numerical results. Through numerical examples, we demonstrate the efficiency of our method, empirically highlighting the tightness of the upper bound. While adaptive time-stepping is already a common practice in the time discretization of the Lindblad equation, we extend this approach by showing how to dynamically adjust the truncation of the Hilbert space. This enables fully adaptive simulations of the density matrix. For large-scale simulations, this approach can significantly reduce computational time and relieves users of the challenge of selecting an appropriate truncation.
Furthermore, as a special case, our method naturally applies to Hamiltonian (unitary) dynamics.

\end{abstract}
\tableofcontents

\section{Introduction}
\label{subsec_lindblad}
\subsection{Presentation of the problem and main contributions}
In this article, we focus on simulating open quantum systems obeying the Lindblad master equation~\cite{lindbladGeneratorsQuantumDynamical1976, goriniCompletelyPositiveDynamical1976,gkls_history}. This equation models the evolution of an open quantum system weakly coupled to a Markovian environment \cite{breuerTheoryOpen2006}. We are particularly interested in the case where the underlying Hilbert space $\xH$ is infinite-dimensional, with a particular focus on bosonic modes, also known as quantum resonators, or cavities. 
More precisely, the Lindblad equation reads as follows:
\begin{align}
    \notag
    \xL(\boldsymbol{\rho})& =-i[\Hamil,\boldsymbol{\rho}] + \sum_i \cD_{\LL^i}(\boldsymbol{\rho}),  \\
    \frac{d}{dt} \boldsymbol{\rho}(t)&= \xL(\boldsymbol{\rho}(t)), \quad \boldsymbol{\rho}(0)=\boldsymbol{\rho}_0,
    \label{eq-lindblad_full}
\end{align}
where $\boldsymbol{\rho}$ is a density operator, that is a self-adjoint positive semidefinite operator of trace one, $\Hamil$ is the Hamiltonian of the system, which is a self-adjoint operator, the bracket denotes the commutator, and the dissipators $\cD_{\LL^i}(\boldsymbol{\rho})$ act on $\boldsymbol{\rho}$ as
\begin{equation}
	\cD_{\LL}(\boldsymbol{\rho})
	= \LL \boldsymbol{\rho} {\LL}^\dag - \frac12 \left( {\LL}^\dag\LL \, \boldsymbol{\rho} + \boldsymbol{\rho}\, {\LL}^\dag \LL\right).
    \label{eq-dissipator}
\end{equation}

In the case of infinite dimensional Hilbert space and unbounded operators $\LL^i$ and/or $\Hamil$, special care is needed to define the solution of the equation, we refer to~\cite{daviesGeneratorsDynamicalSemigroups1979,chebotarevSufficientConditionsConservativity1998} for the general case or \cite{gondolfEnergyPreservingEvolutions2023} for the specific case of bosonic modes.

We are interested in computing an approximation of the solution $\boldsymbol{\rho}(T)$ of \cref{eq-lindblad_full} at a final time $T>0$.
To this aim, two intermediate steps are classically performed.

\paragraph{Step 1.} We begin by selecting a finite-dimensional subspace $\xH_N \subset \xH$. Our goal is to define a finite-dimensional approximation of the solutions of \cref{eq-lindblad_full} within this subspace. Let $\Proj{N}$ denotes the orthogonal projector onto $\xH_N$. We consider the corresponding truncated operators:
\begin{align}
  \Hamil_N=\Proj{N} \Hamil \Proj{N},\quad {\LL^i}_N=\Proj{N} \LL^i \Proj{N}, \quad \forall i\in \xN.
\end{align}

For a single bosonic mode, the Hilbert space is given by $\xH = \{ \sum_{n=0}^\infty u_n \ket{n} \mid \sum_n |u_n|^2< \infty\} \simeq  l^2(\xN)$, where the canonical orthonormal basis is provided by the Fock states $(\ket{n})_{n \geq 0}$, and the corresponding dual basis is $(\bra{n})_{n \geq 0}$. In this setting, a common method is to truncate the Fock basis, namely consider
\begin{align}
    \xH_N=\operatorname{Span} \{\ket{n}\mid 0 \leq n  \leq N\}, \quad \Proj{N}=\sum_{i=0}^{N}\ket{i}\bra{i}.
\end{align}
Next, we define the following Lindbladian for both density operators on $\xH$ and on $\xH_N$.
\begin{align}
    \label{truncated_lindblad}
    \xL_N(\boldsymbol{\rho})& =-i[\Hamil_N,\boldsymbol{\rho}] + \sum_i \cD_{{\LL^i}_N}(\boldsymbol{\rho}).
\end{align}
Thus, we can define the approximated solution $\rN(t)$ by
\begin{align}
    \label{eq-lindblad_trunc-1}
    \frac{d}{dt} \rN(t)&= \xL_N(\rN(t)), \quad \rN(0)=\Proj{N}\boldsymbol{\rho}_0\Proj{N}.
\end{align}
It is important to keep in mind that $\Proj{N}\boldsymbol{\rho}(t)\Proj{N}$ is not equal in general to $\rN(t)$; but we expect $\rN(t)$ to be an approximation of $\boldsymbol{\rho}(t)$. For the sake of simplicity, we assume that $\boldsymbol{\rho}_0$ has support in $\xH_N$, that is $\boldsymbol{\rho}_0=\Proj{N}\boldsymbol{\rho}_0\Proj{N}$ (see \cref{rmk:initial state} for the general case).

\paragraph{Step 2:} Approximate the solution of the linear differential equation \cref{eq-lindblad_trunc-1} using an Ordinary Differential Equation (ODE) solver. In the following, we will denote by $\xF_{\delta t}$ one step of the considered scheme. Let us give an example with the first order explicit Euler scheme, which reads as follows
\begin{align}
    e^{\delta t \xL_N} (\boldsymbol{\rho}) \approx \xF_{\delta t}(\boldsymbol{\rho})= \boldsymbol{\rho} + \delta t \xL_N(\boldsymbol{\rho}).
\end{align}

Taking $N_{step}$ and $ \delta t$ such that $T=N_{step} \delta t$, we compute iteratively $\tilde{ \boldsymbol{\rho}}_{n+1,\delta t}= \xF_{\delta t}(\tilde{ \boldsymbol{\rho}}_{n,\delta t})$ with $\tilde{ \boldsymbol{\rho}}_{0,\delta t}=\boldsymbol{\rho}_0$, that is $\tilde{ \boldsymbol{\rho}}_{N_{step},\delta t}= \xF_{\delta t}^{N_{step}}(\boldsymbol{\rho}_0)$. At this point, we expect $\tilde{ \boldsymbol{\rho}}_{N_{step},\delta t}$ to be close to $\boldsymbol{\rho}(T)$.

\paragraph{}

These two steps are completely standard and can be easily performed using a library for quantum simulation like \textsc{Qutip} \cite{JOHANSSON20131234}, \textsc{Qiskit dynamics} \cite{qiskit_dynamics_2023}, \textsc{Dynamiqs} \cite{guilmin2024dynamiqs}, \textsc{QuantumOptics.jl} \cite{kramer2018quantumoptics}, \textit{etc.} The time solver scheme is usually more complex and powerful than a first order explicit Euler scheme. For instance, it can be a Runge-Kutta scheme or a structure preserving one, like \cite{cao2024structurepreservingnumericalschemeslindblad,appelo2024krauskinghighordercompletely,robin2025unconditionallystabletimediscretization}. The space discretization method we employed in step 1 is commonly known as a Galerkin approximation. For a detailed study of the convergence and limitation of this method to the Schrödinger equation, we refer to \cite{fischer2025quantumparticlewrongbox}. In this article, we do not consider non-linear space discretization methods, such as reduced rank or tensor network methods, nor do we address stochastic unraveling schemes and cumulant expansions.

\paragraph{}

This paper focuses on providing computable bounds on the distance between the approximations $\rN(T)$ or $\tilde{ \boldsymbol{\rho}}_{N_{step},\delta t}$ and the true solution $\boldsymbol{\rho}(T)$. The error $\|\rN(T)-\boldsymbol{\rho}(T)\|_1$ is referred to as the (space-)truncation error, while $\|\rN(T)-\tilde{ \boldsymbol{\rho}}_{N_{step},\delta t}\|_1$ is the time discretization error. Previous works, such as \cite{PRL_estimator}, have addressed (while not being the main contribution of the paper) the space truncation error for finite-dimensional Hamiltonian systems coupled to chains of bosonic modes, using time-adaptive density matrix renormalization group (t-DMRG) methods. Our approach differs in several important ways. First, our proposed method applies to systems that are themselves infinite-dimensional, such as bosonic modes. Second, it is suitable for Lindblad dynamics, not just closed systems. See \cref{sec:PRL_comp} for details.

Other works focus on \textit{a priori} error analysis, typically for specific Hamiltonian systems (e.g.\ \cite{Tong2022provablyaccurate,Peng_2025}); an exception is the recent study of Lindblad equations in \cite{robin2025convergenceanalysisgalerkinapproximations}. However, these studies do not provide explicit usable computable estimates, but rather asymptotic convergence rates. Even when constants can be found, they are typically not explicit and the resulting bounds are not sharp. Conceptually, \textit{a priori} estimates use bounds depending on the exact solution $\boldsymbol{\rho}(t)$ or its regularity (for example the norm of $ \boldsymbol{\rho}(t)$ in a well-chosen Sobolev space), whereas \textit{a posteriori} estimates only depend on the computed solution $\rN(t)$ or $\tilde{ \boldsymbol{\rho}}_{N_{step},\delta t}$. While the  first allows proving convergence and rates of convergence, the second is much sharper in practice to control the error of a given simulation.

Many examples considered in this work are motivated by the bosonic code community, a subfield of quantum error correction devoted to the robust encoding of quantum information in bosonic modes. In this setting, one encounters a broad class of Hamiltonians and dissipative generators, frequently expressed as polynomials in the creation and annihilation operators. In the experimental platform of circuit quantum electrodynamics, sum of unitary operators of the form $e^{i\eta \qop}+e^{-i\eta \qop}$ naturally arise, where $\qop$ denotes the position operator. Such terms originate from the Josephson junction nonlinearity, whose interaction energy is proportional to the cosine of the flux. The simulation of dissipative cat-qubits \cite{mirrahimiDynamicallyProtectedCatqubits2014}, as well as several proposals for implementations of Gottesman--Kitaev--Preskill (GKP) qubits \cite{GKP_original}, is often subtle. Indeed, these qubits deliberately exploit the large dimensionality of the underlying infinite-dimensional Hilbert space in order to encode quantum information with enhanced protection.

The techniques and estimates developed here are, however, not restricted to bosonic systems. They extend directly to other quantum many-body settings—for instance, to spin lattices—as briefly discussed at the end of \cref{subsec_spaceestimate}.

The organization of this article is as follows.

In \cref{subsec_notation}, we introduce notations. In \cref{sec:sec_1,sec_ex1,sec_ex2,sec_faster}, we assume that we have access to the continuous-in-time solution $(\rN(t))_{0 \leq t \leq T}$ of \cref{eq-lindblad_trunc-1}, that is we neglect the time discretization errors. This is motivated by the fact that it is common to have efficient adaptive high-order time integration schemes, so the primary source of difficult-to-control error is precisely the truncation error.

In \cref{sec:sec_1}, we provide an upper bound on the truncation error $\|\rN(T)-\boldsymbol{\rho}(T)\|_1$. This upper bound can be computed using the trajectory $(\rN(t))_{0\leq t \leq T}$ in many cases of interest. We mention the case of spin lattices.

In \cref{sec_ex1}, we apply this estimate to the case of bosonic modes with polynomials of creation and annihilation operators. In this case we first show that it is always possible to compute the bound given in \cref{sec:sec_1}. Then we study in more details several examples from the bosonic code community. We provide simulations illustrating the tightness of the estimate.

In \cref{sec_ex2}, we investigate more complex examples involving either unitary operators or dissipators of the form $e^{i\eta\qop} \left( \Id-\epsilon\Pop\right)$. The main difficulty is that for the truncated Fock basis $\xH_N$, $e^{i\eta\qop}\xH_N$ is not supported on a finite set of Fock states.

Then, in \cref{sec_faster}, we leverage these estimates to introduce a \textit{space-adaptive} method. While \textit{time-adaptive} stepping is common in practice, dynamically adapting the size of the Hilbert space is not. Hence, we show that our estimates offer numerous opportunities to enhance the efficiency and robustness of numerical schemes. This approach also relieves the user from the challenging task of manually determining an efficient truncation balancing accuracy and computational cost. These ideas are implemented in the new open source library \textsc{dynamiqs\_adaptive} developed by the first author.

In \cref{sec:time_estimator}, we do not neglect the time-discretization error anymore, and provide both \textit{a posteriori} estimates for time-discretization (that is $\|\rN(T)-\tilde{ \boldsymbol{\rho}}_{N_{step},\delta t}\|_1$) and for the cascade of the two approximations (i.e. $\|\boldsymbol{\rho}(T)- \tilde{ \boldsymbol{\rho}}_{N_{step},\delta t}\|_1$).

In \cref{sec:pathological}, we provide examples of dynamics where the approach of comparing $\rN(t)$ and $\boldsymbol{\rho}_{(N+K)}(t)$ fails, and in \cref{sec:PRL_comp}, we compare our method to the one presented in \cite{PRL_estimator}.

\subsection{Notations}
\label{subsec_notation}
In this paper, we use the following notations
\begin{itemize}
    \item $\xH$ denotes a separable (often infinite dimensional) Hilbert space.
    \item $\xH_N$ is a finite dimensional vector space embedded in $\xH$. $\Proj{N}$ is the orthogonal projector on $\xH_N$ and $\Proj{N}^\perp$ is the orthogonal projector on $\xH_N^\perp$. In particular, $\Proj{N}+\Proj{N}^\perp=\Id$. As an  example, for $m$ bosonic modes, we have $\xH= l^2(\xN)^{\otimes m}$, $N$ could be the multi-index $(N_1,\ldots,N_{m})$ and $\xH_N$ would be defined by
    \begin{align}
        \xH_N=\operatorname*{Span}\{ \ket{i_1}\otimes \ldots \otimes \ket{i_m}\mid 0\leq i_1 \leq N_1,\, \ldots,\, 0\leq i_m \leq N_m \}.
    \end{align}
    It is always assumed that $\xH_N$ is in the domain of the Lindbladian $\xL$. Besides, in \cref{sec:time_estimator}, we will also assume that  $\xH_N$ is in the domain of $\xL\circ \xL$ (and as many iterations required for higher-order schemes).
    \item $\mathcal{O}(\xH)$ (resp. $\mathcal{O}(\xH_N)$) denotes the set of operators on $\xH$ (resp. $\xH_N$). Bold letters are used to represent these operators.
    \item $\Id$ and $\Id_N$ denote the identity operator in resp. $\xH$ and $\xH_N$.
    \item For any operator $\mathbf{A}\in \mathcal{O}(\xH)$, we denote $\mathbf{A}_N=\Proj{N} \mathbf{A} \Proj{N}$ its truncation to $\xH_N$.
    \item $\xL_N$ denotes the Lindbladian on $\xH_N$ obtained by truncating the Hamiltonian and the dissipators, see \cref{truncated_lindblad}.
    \item $(\rN(t))_{0 \leq t \leq T}$ is the solution of the Lindblad equation with Lindbladian $\xL_N$ on $\xH_N$, see \cref{eq-lindblad_trunc-1}. Note that we often assume that $\rN(0)=\boldsymbol{\rho}_0\in \mathcal{O}(\xH_N)$, refer to \cref{rmk:initial state} for the general case.
    \item For a single bosonic mode, we recall that the annihilation operator $\destroy$ and its adjoint the creation operator $\create$ are defined as follows:
    \begin{align}
        \destroy= \sum_{n=1}^\infty\sqrt{n}\ket{n-1}\bra{n}, \quad \create = \sum_{n=0}^\infty\sqrt{n+1}\ket{n+1}\bra{n}.
    \end{align}
    The position and momentum operators obey the relations
    \begin{align}
    \qop &= \frac{{\destroy + \create}}{\sqrt{2}}, \quad \Pop = \frac{{\destroy - \create}}{i\sqrt{2}},
    \end{align}
    and $\Nop=\create \destroy$ denotes the photon number operator. For a system with two modes, that is $\xH=l^2(\xN)\otimes l^2(\xN)$, the annihilation operator on the first mode ($\destroy \otimes \Id$) and on the second mode ($\Id \otimes \ \destroy$) are denoted by $\destroy$ and $\destroyb$ with a surcharge of notation. 
    \item We recall that the trace norm (also known as nuclear norm) is defined by 
    \begin{align}
        \label{eq_nuclear_norm}
        \|\boldsymbol{\rho}\|_1= \xtr{|\boldsymbol{\rho}|}, \quad |\boldsymbol{\rho}| =\sqrt{\boldsymbol{\rho}^\dag \boldsymbol{\rho}}.
    \end{align}
    The set of trace-class operators $\xK^1(\xH)= \{ \boldsymbol{\rho} \in \xH \mid \|\boldsymbol{\rho}\|_1 <\infty\}$ equipped with the trace-norm is a Banach space. We denote $\xK^1_s(\xH)$ the linear set of self-adjoint operators in $\xK^1(\xH)$ and $\xK^1_+(\xH)\subset \xK^1_s(\xH)$ the convex cone of positive semidefinite operators. As $\dim(\xH_N)<\infty$, the set $\xK^1(\xH_N)$ coincides with $\mathcal{O}(\xH_N)$.
    
\end{itemize}

\section{\textit{A posteriori} truncation error estimates for Lindblad equation}
\label{sec:sec_1}
\label{subsec_spaceestimate}
In this section, we assume that we have access to the continuous-in-time solution $(\rN(t))_{0 \leq t\leq T}$ of \cref{eq-lindblad_trunc-1}. Note that for all $0 \leq t \leq T$, $\rN(t)\in \xK^1_+(\xH_N)$.

The key to obtain our estimates resides in the fact that the flow of the Lindblad equation contracts the trace norm. Let us first recall a classical result.
\begin{proposition}{\cite[Lemma 1]{KOSSAKOWSKI1972247}}
    \label{prop:CPTP}
    Let $\mathcal{M}$ be a Completely Positive Trace Preserving (CPTP) map, and $\boldsymbol{\sigma}\in \xK_s^1$. Then,
    \begin{align}
        \|\mathcal{M}(\boldsymbol{\sigma})\|_1 \leq \|\boldsymbol{\sigma}\|_1.
    \end{align} 
\end{proposition}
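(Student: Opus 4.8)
The plan is to reduce the statement to the well-known fact that a CPTP map is a contraction for the trace norm on Hermitian operators, which follows from the positive/negative part decomposition. Concretely, I would first note that it suffices to treat $\boldsymbol{\sigma} \in \xK^1_s$ (the statement is already restricted to self-adjoint operators, which is what makes it work — it is false on general trace-class operators without an extra factor). Then I would invoke the Jordan decomposition: write $\boldsymbol{\sigma} = \boldsymbol{\sigma}_+ - \boldsymbol{\sigma}_-$ with $\boldsymbol{\sigma}_\pm \in \xK^1_+$, $\boldsymbol{\sigma}_+ \boldsymbol{\sigma}_- = 0$, so that $\|\boldsymbol{\sigma}\|_1 = \xtr{\boldsymbol{\sigma}_+} + \xtr{\boldsymbol{\sigma}_-}$.

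Next I would apply the map: by linearity $\mathcal{M}(\boldsymbol{\sigma}) = \mathcal{M}(\boldsymbol{\sigma}_+) - \mathcal{M}(\boldsymbol{\sigma}_-)$, and by positivity of $\mathcal{M}$ both $\mathcal{M}(\boldsymbol{\sigma}_\pm)$ are in $\xK^1_+$. The triangle inequality for $\|\cdot\|_1$ then gives
\begin{align}
    \|\mathcal{M}(\boldsymbol{\sigma})\|_1 \leq \|\mathcal{M}(\boldsymbol{\sigma}_+)\|_1 + \|\mathcal{M}(\boldsymbol{\sigma}_-)\|_1 = \xtr{\mathcal{M}(\boldsymbol{\sigma}_+)} + \xtr{\mathcal{M}(\boldsymbol{\sigma}_-)},
\end{align}
where the last equality uses that the trace norm of a positive operator is just its trace. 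Finally, trace preservation yields $\xtr{\mathcal{M}(\boldsymbol{\sigma}_\pm)} = \xtr{\boldsymbol{\sigma}_\pm}$, and summing reproduces $\|\boldsymbol{\sigma}\|_1 = \xtr{\boldsymbol{\sigma}_+} + \xtr{\boldsymbol{\sigma}_-}$, closing the chain of inequalities.

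The only genuinely delicate point is the very first reduction step — ensuring the Jordan/Hahn decomposition is available and well-behaved in the infinite-dimensional trace-class setting, i.e. that $\boldsymbol{\sigma} \in \xK^1_s(\xH)$ really does split as a difference of two orthogonal positive trace-class operators with $\|\boldsymbol{\sigma}\|_1$ equal to the sum of their traces. This is standard spectral theory (apply the Borel functional calculus $x \mapsto x^\pm$ to the self-adjoint compact-resolvent-free operator $\boldsymbol{\sigma}$, or simply use that $\xK^1_s(\xH)$ is the self-adjoint part of a Banach space in which the positive cone is generating with this exact norm identity), so I would just cite it rather than prove it. Note also that only positivity (not complete positivity) and trace preservation are actually used; the "completely" in CPTP is not needed for this particular inequality. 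Everything else is a two-line manipulation with the triangle inequality and linearity.
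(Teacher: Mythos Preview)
Your proof is correct and follows essentially the same route as the paper's own proof: Jordan decomposition of $\boldsymbol{\sigma}$ into orthogonal positive and negative parts, triangle inequality, positivity to identify trace norm with trace, and trace preservation to close the chain. Your additional remarks that only positivity (not complete positivity) is required and that the Jordan decomposition in $\xK^1_s(\xH)$ deserves a word of justification are accurate observations that the paper does not make explicit.
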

\begin{proof}
    $\boldsymbol{\sigma}$ is self-adjoint, thus we can decompose it into a positive part and negative part, so that $\boldsymbol{\sigma}=\boldsymbol{\sigma}_+-\boldsymbol{\sigma}_-$, with $\boldsymbol{\sigma}_+\geq 0$, $\boldsymbol{\sigma}_- \geq 0$ and $\boldsymbol{\sigma}_- \boldsymbol{\sigma}_+ = 0$. Thus,
    \begin{align}
        \|\boldsymbol{\sigma}\|_1 &= \|\boldsymbol{\sigma}_+ - \boldsymbol{\sigma}_-\|_1 = \xtr{\boldsymbol{\sigma}_+} + \xtr{\boldsymbol{\sigma}_-}.
    \end{align}
    Then, we compute
    \begin{align}
        \|\mathcal{M}(\boldsymbol{\sigma})\|_1 &= \|\mathcal{M}(\boldsymbol{\sigma}_+) - \mathcal{M}(\boldsymbol{\sigma}_-)\|_1\notag\\
        &\leq \|\mathcal{M}(\boldsymbol{\sigma}_+)\|_1+\|\mathcal{M}(\boldsymbol{\sigma}_-)\|_1 \quad\text{(triangle inequality)}\notag\\
        &= \operatorname{Tr}(\mathcal{M}(\boldsymbol{\sigma}_+)) + \operatorname{Tr}(\mathcal{M}(\boldsymbol{\sigma}_-)) \quad\text{($\mathcal{M}$ is completely positive)}\notag\\
        &= \operatorname{Tr}(\boldsymbol{\sigma}_+) + \operatorname{Tr}(\boldsymbol{\sigma}_-) \quad\text{($\mathcal{M}$ is trace preserving)}\notag\\
        & = \|\boldsymbol{\sigma}\|_1,
    \end{align}
    which concludes the proof.
\end{proof}

The flow of the Lindbladian, that is $\boldsymbol{\rho}_0 \mapsto e^{t\xL}\boldsymbol{\rho}_0$ for any $t\geq 0$, is CPTP. Hence, it contracts the trace norm.

\begin{lemma}
    \label{prop-aposteriori}
    We have the following estimate
    \begin{align}
        \label{eq_estimate_space}
        \|\boldsymbol{\rho}(t)-\rN(t)\| \leq \|\boldsymbol{\rho}(0)-\rN(0)\|_1 + \int_0^t \left\|\left(\xL-\xL_N\right) \rN(s)\right\|_1 ds.
    \end{align}
\end{lemma}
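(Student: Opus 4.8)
The plan is to use the variation-of-constants (Duhamel) formula to compare the two evolutions, and then apply the trace-norm contractivity of the exact Lindblad flow from \Cref{prop:CPTP}. First I would write $\boldsymbol{\rho}(t) = e^{t\xL}\boldsymbol{\rho}(0)$ and $\rN(t) = e^{t\xL_N}\rN(0)$ — here $e^{t\xL_N}$ is understood as acting on operators on $\xH$ after the natural embedding $\xK^1(\xH_N)\hookrightarrow \xK^1(\xH)$, so that $\xL-\xL_N$ makes sense applied to $\rN(s)$. The key identity is
\begin{align}
    \boldsymbol{\rho}(t)-\rN(t) = e^{t\xL}\bigl(\boldsymbol{\rho}(0)-\rN(0)\bigr) + \int_0^t e^{(t-s)\xL}\,(\xL-\xL_N)\,\rN(s)\; ds,
\end{align}
which one obtains by differentiating $s\mapsto e^{(t-s)\xL}\rN(s)$: the derivative is $-\xL e^{(t-s)\xL}\rN(s) + e^{(t-s)\xL}\xL_N\rN(s) = -e^{(t-s)\xL}(\xL-\xL_N)\rN(s)$, and integrating from $0$ to $t$ gives $\boldsymbol{\rho}(t) = e^{t\xL}\rN(0) + \int_0^t e^{(t-s)\xL}(\xL-\xL_N)\rN(s)\,ds$, then subtract $\rN(t)$ and regroup the first term with $e^{t\xL}\boldsymbol{\rho}(0)$.

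Next I would take the trace norm on both sides, apply the triangle inequality (including passing it under the integral sign), and then use that $e^{r\xL}$ is CPTP for every $r\geq 0$, so by \Cref{prop:CPTP} it contracts the trace norm on self-adjoint trace-class operators. Both $\boldsymbol{\rho}(0)-\rN(0)$ and $(\xL-\xL_N)\rN(s)$ are self-adjoint trace-class operators (the latter because $\rN(s)$ is self-adjoint, $\xL$ and $\xL_N$ preserve self-adjointness, and $\xH_N$ lies in the domain of $\xL$), so we get $\|e^{t\xL}(\boldsymbol{\rho}(0)-\rN(0))\|_1 \leq \|\boldsymbol{\rho}(0)-\rN(0)\|_1$ and $\|e^{(t-s)\xL}(\xL-\xL_N)\rN(s)\|_1 \leq \|(\xL-\xL_N)\rN(s)\|_1$. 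This yields exactly \cref{eq_estimate_space}.

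The main obstacle is not the algebra but the functional-analytic justification in infinite dimensions: one must check that the Duhamel formula is valid for the (possibly unbounded) generators $\xL$ and $\xL_N$, that $s\mapsto e^{(t-s)\xL}\rN(s)$ is genuinely differentiable in the trace norm with the claimed derivative, and that $s\mapsto \|(\xL-\xL_N)\rN(s)\|_1$ is integrable on $[0,t]$. This rests on the standing assumption that $\xH_N$ is in the domain of $\xL$ together with the well-posedness results for the Lindblad semigroup cited in the introduction; since $\rN(s)$ stays in $B(\xH_N)$ and $\xH_N$ is finite-dimensional, $(\xL-\xL_N)\rN(s)$ is in fact a finite-rank, hence trace-class, operator depending continuously on $s$, which makes the integrability clear. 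Under the simplifying assumption $\boldsymbol{\rho}_0 = \Proj{N}\boldsymbol{\rho}_0\Proj{N}$ the first term vanishes, but I would keep it in the statement for the general case treated in \Cref{rmk:initial state}.
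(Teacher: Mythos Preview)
Your proof is correct and follows essentially the same route as the paper's: derive the Duhamel identity $\boldsymbol{\rho}(t)-\rN(t) = e^{t\xL}(\boldsymbol{\rho}(0)-\rN(0)) + \int_0^t e^{(t-s)\xL}(\xL-\xL_N)\rN(s)\,ds$ and then invoke the trace-norm contractivity of the CPTP semigroup $e^{r\xL}$ from \cref{prop:CPTP}. There is a harmless slip in your intermediate line (integrating $\tfrac{d}{ds}\bigl[e^{(t-s)\xL}\rN(s)\bigr]$ yields $\rN(t)-e^{t\xL}\rN(0)$, not an expression for $\boldsymbol{\rho}(t)$), but the displayed key identity and the rest of the argument are correct.
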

\begin{proof}
    The evolution of $\mathbf{r}(t)=\boldsymbol{\rho}(t)-\rN(t)$ can be written as
    \begin{align}
        \notag\frac{d}{dt}\mathbf{r}(t)&=\mathcal{L}(\boldsymbol{\rho}(t))-\mathcal{L}(\rN(t))+\mathcal{L}(\rN(t))-\mathcal{L}_N(\rN(t))\\
        \label{eq:errors}
        &=\mathcal{L}(\mathbf{r}(t))+(\mathcal{L}-\mathcal{L}_N)(\rN(t)).
    \end{align}
    Then, using Duhamel's principle, we get
    \begin{align}
        \label{eq-rt}
        \mathbf{r}(t)= e^{t\xL}\mathbf{r}(0) + \int_0^t  e^{(t-s)\xL} \left( \left(\xL-\xL_N\right) \rN(s)\right)ds.
    \end{align}
    Taking the trace-norm and applying the triangle inequality gives
    \begin{align}
        \|\mathbf{r}(t)\|_1 \leq  \|\mathbf{r}(0)\|_1 + \int_0^t  \left\|\left(\xL-\xL_N\right) \rN(s)\right\|_1 ds.
    \end{align}
\end{proof}
As a consequence, we can define the estimator $\xi$ as
\begin{align}
    \label{eq_estimator_xi}
    \dot \xi(t)= \|\left(\xL-\xL_N\right) \rN(t)\|_1, \quad \xi(0)= \| \boldsymbol{\rho}_0- \rN(0)\|_1,
\end{align}
and we get $\xi(t)\geq \|\boldsymbol{\rho}(t)-\rN(t)\|_1$ for all $t\geq 0$.
\begin{remark}
    \label{rmk:initial state}
   If $\boldsymbol{\rho}_0$ does not belong to $\xK^1_+(\xH_N)$, but only to $\xK^1_+(\xH)$, we use the unnormalized initial condition $\Proj{N}\boldsymbol{\rho}_0\Proj{N}\in \xK^1_s(\xH_N)$ as initial condition for $\rN(0)$. The first term on the right-hand-side of \cref{eq_estimate_space} accounts for this error.
\end{remark}

\begin{remark}
    \label{time:dependent}
   \cref{prop-aposteriori} holds for time-dependent Lindbladians as well. The proof is the same, except that we need to replace $e^{t\xL}$ (resp. $e^{(t-s)\xL}$) by the propagator of the time-dependent Lindbladian from time $0$ to time $t$ (resp. from time $s$ to time $t$). Indeed, the propagator of a time-dependent Lindbladian is also CPTP, and thus contracts the trace norm; we refer the reader to \cite[Theorem 3.5 and remark 3 after Theorem 2.11]{gondolfEnergyPreservingEvolutions2023} or \cite{article_chebo}.
\end{remark}

\cref{prop-aposteriori} provides an \textit{a posteriori} error estimate because the computation of $\|(\xL-\xL_N)(\rN(t))\|_1$ only requires the knowledge of the trajectory $(\rN(t))_{0 \leq t \leq T}$ (and not of $(\boldsymbol{\rho}(t))_{0 \leq t \leq T}$, which is usually intractable). To compute the estimator, two main components are required:

\paragraph{}First, we need the ability to compute $\|\left(\xL-\xL_N\right) \rN(t)\|_1$. For a general Lindbladian, this computation may not be feasible since $\xL(\rN(t))$ might be inaccessible\footnote{For instance, \(\xL(\rN(t))\) might have infinite rank, and is thus impossible to represent within a finite-dimensional linear space.}. However, in the following sections, we present a broad class of Lindbladians where it is possible to compute or bound cleverly \(\|\left(\xL - \xL_N\right) \rN(t)\|_1\). In \cref{sec_ex1}, we consider Lindbladians involving only polynomials in creation and annihilation operators. In this case $\xL(\rN)$ can always be explicitly computed as it has support in the slightly larger Hilbert space $\xH_{N+d}$ for some integer $d$. In \cref{sec_ex2}, we consider more complex examples where $\xL(\rN)$ cannot be explicitly computed but one can still obtain a good upper bound on $\|(\xL-\xL_N)(\rN(t))\|_1$.

\paragraph{}The second requirement is the ability to compute the integral in \cref{eq_estimate_space}. This can be approximately achieved by numerically solving \cref{eq_estimator_xi}. For a bound that does not involve time interpolation, we refer the reader to \cref{sec:time_estimator}.

\paragraph{Application to spin lattices}
One family of examples satisfying the first requirement is spin lattices in the thermodynamic limit, where the truncation retains only a finite number of spins. Consider a $d$-dimensional lattice indexed by $\mathbb{Z}^d$ (or a subset thereof), with each site $j$ having a finite-dimensional local Hilbert space $\mathbb{C}^{d_j}$ (e.g., $d_j = 2$ for spin-1/2 systems). Given a finite cubic domain $\Lambda_N \subset \mathbb{Z}^d$ of linear size $N$, the truncated Hilbert space is the finite tensor product $\xH_N = \bigotimes_{j \in \Lambda_N} \mathbb{C}^{d_j}$. The infinite system is understood in the thermodynamic limit as $N\to\infty$, or more rigorously in the operator-algebraic framework via the inductive limit of local algebras.

For physically relevant spin lattices, the Lindbladian $\xL$ is typically a sum of local terms: $\xL = \sum_{j} \xL_j$, where each $\xL_j$ acts non-trivially only on a finite number of neighboring spins (e.g., nearest-neighbor or finite-range interactions). The truncated Lindbladian $\xL_N$ retains only those terms $\xL_j$ that are supported in $\Lambda_N$. Crucially, the difference $(\xL - \xL_N)(\rN(t))$ involves only boundary terms—interactions crossing the boundary $\partial \Lambda_N$ of the truncation region. More precisely, if the interactions have range $r$ (i.e., $\xL_j$ couples at most $r$-th nearest neighbors), then $\xL(\rN(t))$ can be represented as an operator on a slightly larger finite region $\Lambda_{N+r}$. The error $\|(\xL-\xL_N)(\rN(t))\|_1$ can thus be numerically computed, as it only involves operators on the finite-dimensional space $\xH_{N+r}$.

\section{Application 1: estimates for polynomial operators on bosonic modes}
\label{sec_ex1}
\subsection{General case}
\label{subsec:ap1general_case}
In this section, we consider the case where $\xH=l^2{(\xN)}^{\otimes m}$, and the Hamiltonian and the dissipators are polynomials in creation and annihilation operators. We show that for these systems, the computation of $\xL(\rN(t))$ is simple. First, we treat the case of a single mode before discussing the generalization to several modes.

\begin{definition}
    An (unbounded) operator $\LL$ on $l^2(\xN)$ is a polynomial in the creation and annihilation operators of degree $d$ if there exists a (non-commutative) polynomial of degree $d\in \xN$ of the form ${Q}[X, Y]=\sum_{i+j\leq d}\nu_{i,j}X^iY^j$ such that $\LL={Q}[\destroy, \create]=\sum_{i+j\leq d}\nu_{i,j}\destroy^i \create^j$.
\end{definition}
If the Hamiltonian and the dissipators are polynomials in $\destroy$ and $\create$, the corollaries of the next proposition show that it is always possible to compute explicitly $\xL(\rN)$.
\begin{proposition}
    \label{proposition:polyink}
    Let ${Q}[X, Y]$ be a polynomial of degree $d\geq 1$, then
    \begin{align}
       {Q}[\destroy, \create] \xH_N \subset \xH_{N+d}.
    \end{align}
\end{proposition}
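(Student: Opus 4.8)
The plan is to reduce the statement to a bound on how far each monomial $\destroy^i\create^j$ with $i+j\le d$ can push a Fock state $\ket{n}$ with $n\le N$. Since $\xH_N=\operatorname{Span}\{\ket{n}\mid 0\le n\le N\}$ and $Q[\destroy,\create]$ is a finite linear combination of such monomials, it suffices by linearity to show that $\destroy^i\create^j\ket{n}\in\xH_{N+d}$ for every $n\le N$ and every $i,j$ with $i+j\le d$.

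First I would recall the elementary actions $\create\ket{n}=\sqrt{n+1}\ket{n+1}$ and $\destroy\ket{n}=\sqrt{n}\ket{n-1}$ (with $\destroy\ket{0}=0$), from which $\create^j\ket{n}$ is a scalar multiple of $\ket{n+j}$ and $\destroy^i\ket{m}$ is a scalar multiple of $\ket{m-i}$ (or zero). Composing, $\destroy^i\create^j\ket{n}$ is a scalar multiple of $\ket{n+j-i}$ when $n+j-i\ge 0$ and zero otherwise. In either case the resulting vector lies in $\operatorname{Span}\{\ket{n+j-i}\}\subset\xH_{N+d}$, because $n+j-i\le n+j\le N+j\le N+d$ (using $j\le i+j\le d$) and $n+j-i\ge 0$ whenever the vector is nonzero. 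Taking linear combinations over the monomials of $Q$ and over basis vectors of $\xH_N$ then gives $Q[\destroy,\create]\xH_N\subset\xH_{N+d}$.

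There is essentially no obstacle here: the only thing to be slightly careful about is that monomials are written in the ordering $\destroy^i\create^j$ (annihilation on the right, creation on the left acting first on the ket), so that the worst-case upward shift is controlled by the total number of creation operators $j$, and $j\le d$; the annihilation operators only move weight downward (or annihilate it), so they never threaten the upper bound. One could alternatively phrase the argument via the number operator $\Nop$: on $\xH_N$ one has $\Nop\le N$, and each application of $\create$ raises the $\Nop$-eigenvalue by $1$ while each $\destroy$ lowers it by $1$, so $\destroy^i\create^j$ maps an $\Nop$-eigenspace of eigenvalue $n$ into the eigenspace of eigenvalue $n+j-i\le N+d$. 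I would present the direct basis computation as it is the most transparent.

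One remark worth including: the bound $N+d$ is attained (take $Q=\create^d$ and $n=N$), so the degree of the polynomial is exactly the right quantity governing the enlargement of the support, which is what makes this proposition the natural tool for computing $\xL(\rN)$ in the subsequent corollaries.
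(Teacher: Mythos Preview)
Your proof is correct. You work directly with the monomials $\destroy^i\create^j$ in the normal form already supplied by the definition, computing $\destroy^i\create^j\ket{n}$ as a scalar multiple of $\ket{n+j-i}$ and bounding $n+j-i\le N+d$. The paper instead invokes the commutation relation $[\destroy,\create]=\Id$ to rewrite $Q[\destroy,\create]$ as $\sum_{i+2j\le d}\lambda_{i,j}\destroy^i\Nop^j+\mu_{i,j}\create^i\Nop^j$, then uses that $\Nop$ and $\destroy^i$ preserve $\xH_N$ while $\create^i$ maps into $\xH_{N+i}$. Your route is more elementary and avoids the algebraic rewriting; the paper's form would be the natural one if the polynomial were given in an arbitrary ordering of $\destroy$ and $\create$, but since the definition already fixes the anti-normal ordering $\destroy^i\create^j$, your direct basis computation is the cleaner choice. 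Your remark on sharpness ($Q=\create^d$) is a nice addition not present in the paper.
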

\begin{proof}
    Using that $[\destroy,\create]=\Id$, we can reduce ${Q}[\destroy, \create]$ to the following form
    \begin{align}
        {Q}[\destroy, \create]&= \sum_{i+2j \leq d} \lambda_{i,j} \destroy^i \Nop^j + \mu_{i,j}  \create^i \Nop^j,
    \end{align}
    where $\Nop=\create \destroy$ is the photon number operator. If we apply the operator ${Q}[\destroy, \create]$ on an element of $\xH_N$, the first part of the sum remains in $\xH_N$. As $\xH_N$ is invariant under the action of $\Nop$, $\create^i \Nop^j \xH_N \subset \xH_{N+i}$. Hence, ${Q}[\destroy, \create] \xH_N \subset \xH_{N+d}$.
\end{proof}
\begin{example}
    For any $\boldsymbol{\sigma}_N \in \mathcal{O}(\xH_N)$, we can decompose $\boldsymbol{\sigma}_N=\sum_{0\leq i,j\leq N} \sigma_{i,j} \ket{i}\bra{j}$, and we get indeed that $\create \boldsymbol{\sigma}$ belongs to $\mathcal{O}(\xH_{N+1})$.
\end{example}
\begin{corollary}
    \label{co:Hamiltonian}
    Let $\Hamil$ be a polynomial operator of degree $d$ in $\destroy$ and $\create$. Then, for every operator $\boldsymbol{\sigma}_N\in \xK_s^1(\xH_N)$:
    \begin{align}
        -i[\Hamil,\boldsymbol{\sigma}_N]=-i[\Hamil_{N+d},\boldsymbol{\sigma}_N].
    \end{align}
\end{corollary}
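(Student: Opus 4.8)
The claim is that when the Hamiltonian $\Hamil$ is a polynomial of degree $d$ in $\destroy$ and $\create$, the commutator $-i[\Hamil,\boldsymbol{\sigma}_N]$ is unchanged if we replace $\Hamil$ by its truncation $\Hamil_{N+d}=\Proj{N+d}\Hamil\Proj{N+d}$. The plan is to show that each of the two terms $\Hamil\boldsymbol{\sigma}_N$ and $\boldsymbol{\sigma}_N\Hamil$ already "lives" inside $\xH_{N+d}$ in the appropriate sense, so that sandwiching $\Hamil$ between copies of $\Proj{N+d}$ changes nothing.

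First I would treat $\Hamil\boldsymbol{\sigma}_N$. Since $\boldsymbol{\sigma}_N\in\xK^1_s(\xH_N)$, its range is contained in $\xH_N$ and $\boldsymbol{\sigma}_N=\Proj{N}\boldsymbol{\sigma}_N$. By \cref{proposition:polyink} applied to the polynomial defining $\Hamil$, we have $\Hamil\xH_N\subset\xH_{N+d}$, hence $\Proj{N+d}\Hamil\boldsymbol{\sigma}_N=\Hamil\boldsymbol{\sigma}_N$. Moreover $\Hamil\boldsymbol{\sigma}_N=\Hamil\Proj{N}\boldsymbol{\sigma}_N=\Hamil\Proj{N+d}\boldsymbol{\sigma}_N$ because $\boldsymbol{\sigma}_N$ is supported on $\xH_N\subset\xH_{N+d}$, so in fact $\Hamil\boldsymbol{\sigma}_N=\Proj{N+d}\Hamil\Proj{N+d}\boldsymbol{\sigma}_N=\Hamil_{N+d}\boldsymbol{\sigma}_N$. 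The term $\boldsymbol{\sigma}_N\Hamil$ is handled by taking adjoints: since $\Hamil$ is self-adjoint, $\boldsymbol{\sigma}_N\Hamil=(\Hamil\boldsymbol{\sigma}_N)^\dag$, and one checks identically that $\boldsymbol{\sigma}_N\Hamil=\boldsymbol{\sigma}_N\Hamil_{N+d}$ using $\Hamil_{N+d}^\dag=\Hamil_{N+d}$. Subtracting and multiplying by $-i$ gives $-i[\Hamil,\boldsymbol{\sigma}_N]=-i[\Hamil_{N+d},\boldsymbol{\sigma}_N]$.

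I do not expect any genuine obstacle here; the statement is essentially a bookkeeping consequence of \cref{proposition:polyink}. The only point requiring a little care is making sure the projectors are inserted on the correct side: one must use both that $\boldsymbol{\sigma}_N$ has range in $\xH_N$ (to absorb the right-hand projector when acting on the left) and that $\Hamil$ maps $\xH_N$ into $\xH_{N+d}$ (to absorb the left-hand projector). Since $\boldsymbol{\sigma}_N$ is self-adjoint with both range and "co-range" in $\xH_N$, the symmetric treatment of the two terms in the commutator goes through without fuss.
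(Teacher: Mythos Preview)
Your proposal is correct and follows exactly the paper's approach: use \cref{proposition:polyink} together with $\boldsymbol{\sigma}_N=\Proj{N+d}\boldsymbol{\sigma}_N$ to conclude $\Hamil\boldsymbol{\sigma}_N=\Hamil_{N+d}\boldsymbol{\sigma}_N$, and handle the second term of the commutator by adjoints. The paper's version is simply terser.
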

\begin{proof}
    Using \cref{proposition:polyink}, $\Hamil\boldsymbol{\sigma}_N=\Proj{N+d}\Hamil\boldsymbol{\sigma}_N$. Besides, $\Proj{N+d}\boldsymbol{\sigma}_N=\boldsymbol{\sigma}_N$, so that $\Hamil\boldsymbol{\sigma}_N= \Hamil_{N+d}\boldsymbol{\sigma}_N$.
\end{proof}
\begin{remark}

    We recall that as $\boldsymbol{\sigma}_N\in \xK_s^1(\xH_N)$ and $\xH_N\subset \xH$, one has $\boldsymbol{\sigma}_N\in \xK_s^1(\xH)$.
\end{remark}
\begin{corollary}
    \label{co:dissipator}
    Let $\LL$ be a polynomial operator of degree $d$ in $\destroy$ and $\create$. Then, for every density operator $\boldsymbol{\sigma}_N\in \xK_s^1(\xH_N)$,
    \begin{align}
        \cD_{\LL}(\boldsymbol{\sigma}_N)=\cD_{\LL_{N+2d}}(\boldsymbol{\sigma}_N).
    \end{align}
\end{corollary}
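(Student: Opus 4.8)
The plan is to treat the dissipator term by term. Recall that $\cD_{\LL}(\boldsymbol{\sigma}_N) = \LL\boldsymbol{\sigma}_N\LL^\dag - \tfrac12\bigl(\LL^\dag\LL\,\boldsymbol{\sigma}_N + \boldsymbol{\sigma}_N\,\LL^\dag\LL\bigr)$, and similarly for $\cD_{\LL_{N+2d}}$. I will show that $\LL\boldsymbol{\sigma}_N\LL^\dag = \LL_{N+2d}\boldsymbol{\sigma}_N\LL_{N+2d}^\dag$ and $\LL^\dag\LL\,\boldsymbol{\sigma}_N = \LL_{N+2d}^\dag\LL_{N+2d}\,\boldsymbol{\sigma}_N$; the remaining term follows by taking adjoints and using that $\boldsymbol{\sigma}_N$ is self-adjoint. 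The only inputs needed are \cref{proposition:polyink}, the elementary identities $\Proj{M}\Proj{M'} = \Proj{\min(M,M')}$ and $\boldsymbol{\sigma}_N = \Proj{N}\boldsymbol{\sigma}_N\Proj{N}$, and the observation that $\LL^\dag$ is again a polynomial of degree $d$ in $\destroy,\create$ — because $(\destroy^i\create^j)^\dag = \destroy^j\create^i$, so adjunction preserves the total degree — so that \cref{proposition:polyink} applies to $\LL^\dag$ as well, yielding $\LL\,\xH_M \subset \xH_{M+d}$ and $\LL^\dag\,\xH_M \subset \xH_{M+d}$ for every $M$.

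First I would record the key building block: $\LL\boldsymbol{\sigma}_N = \LL_{N+2d}\boldsymbol{\sigma}_N$. Writing $\LL\boldsymbol{\sigma}_N = \LL\Proj{N}\boldsymbol{\sigma}_N$ and using that $\LL\Proj{N}$ has range in $\xH_{N+d} \subset \xH_{N+2d}$, one gets $\LL\Proj{N} = \Proj{N+2d}\LL\Proj{N} = \Proj{N+2d}\LL\Proj{N+2d}\Proj{N} = \LL_{N+2d}\Proj{N}$, hence $\LL\boldsymbol{\sigma}_N = \LL_{N+2d}\Proj{N}\boldsymbol{\sigma}_N = \LL_{N+2d}\boldsymbol{\sigma}_N$. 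Taking adjoints (and using $\boldsymbol{\sigma}_N^\dag = \boldsymbol{\sigma}_N$) gives the companion identity $\boldsymbol{\sigma}_N\LL^\dag = \boldsymbol{\sigma}_N\LL_{N+2d}^\dag$.

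With these two facts the first term is immediate: $\LL\boldsymbol{\sigma}_N\LL^\dag = \LL_{N+2d}\boldsymbol{\sigma}_N\LL^\dag = \LL_{N+2d}\bigl(\boldsymbol{\sigma}_N\LL^\dag\bigr) = \LL_{N+2d}\boldsymbol{\sigma}_N\LL_{N+2d}^\dag$. For the second term I would first note that $\LL\boldsymbol{\sigma}_N$ has range in $\xH_{N+d}$, so $\LL^\dag\LL\boldsymbol{\sigma}_N = \LL^\dag\Proj{N+d}\LL\boldsymbol{\sigma}_N$; since $\LL^\dag\Proj{N+d}$ has range in $\xH_{N+2d}$, the same projector argument yields $\LL^\dag\Proj{N+d} = \LL_{N+2d}^\dag\Proj{N+d}$, whence $\LL^\dag\LL\boldsymbol{\sigma}_N = \LL_{N+2d}^\dag\LL\boldsymbol{\sigma}_N = \LL_{N+2d}^\dag\LL_{N+2d}\boldsymbol{\sigma}_N$, the last equality again by the building block. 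Finally, $\boldsymbol{\sigma}_N\LL^\dag\LL = (\LL^\dag\LL\boldsymbol{\sigma}_N)^\dag = (\LL_{N+2d}^\dag\LL_{N+2d}\boldsymbol{\sigma}_N)^\dag = \boldsymbol{\sigma}_N\LL_{N+2d}^\dag\LL_{N+2d}$. Collecting the three contributions with their signs gives $\cD_{\LL}(\boldsymbol{\sigma}_N) = \cD_{\LL_{N+2d}}(\boldsymbol{\sigma}_N)$.

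I do not expect a real obstacle here; the argument is essentially bookkeeping. The one point that deserves attention is the choice of truncation index: the factor $\LL\boldsymbol{\sigma}_N$ alone only reaches $\xH_{N+d}$, but applying $\LL^\dag$ to it (as in $\LL^\dag\LL\boldsymbol{\sigma}_N$) can reach $\xH_{N+2d}$, which is why $N+2d$ — and not $N+d$ — is the correct common truncation for the whole dissipator. One should also check that all operators involved are genuinely defined on the finite-dimensional subspaces in play, which is guaranteed since polynomials in $\destroy,\create$ map each $\xH_M$ into some $\xH_{M'}$.
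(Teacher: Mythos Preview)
Your proof is correct and follows essentially the same approach as the paper: both treat the dissipator term by term via \cref{proposition:polyink}, observing that $\LL\boldsymbol{\sigma}_N\LL^\dag$ is captured already at level $N+d$ while $\LL^\dag\LL\boldsymbol{\sigma}_N$ forces the truncation up to $N+2d$. Your argument is simply more explicit than the paper's (which states the key identities without the intermediate projector manipulations), and your remark that $\LL^\dag$ is again a polynomial of degree $d$ is a detail the paper leaves implicit.
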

\begin{proof}
    Using \cref{proposition:polyink}, one has $\LL \boldsymbol{\sigma}_N \LL^\dag =\LL_{N+d}\boldsymbol{\sigma}_N \LL_{N+d}^\dag$. In general $\LL^\dag \LL \boldsymbol{\sigma}_N$ is not equal to $\LL_{N+d}^\dag \LL_{N+d}\boldsymbol{\sigma}_N$ (a counter-example is given in \cref{ex:squeezecat}). Nevertheless, we have $\LL^\dag \LL \boldsymbol{\sigma}_N=\LL_{N+2d}^\dag \LL_{N+2d}\boldsymbol{\sigma}_N$, which ensures that $\cD_{\LL}(\boldsymbol{\sigma}_N)=\cD_{\LL_{N+2d}}(\boldsymbol{\sigma}_N)$.
\end{proof}

Extension to several bosonic modes is rather straightforward. Let ${Q}[\mathbf{X_1},\mathbf{Y_1},\ldots \mathbf{X_m},\mathbf{Y_m}]$ be a polynomial of degree $d$. We easily generalize \cref{proposition:polyink} to $m$ modes and obtain
\begin{align}
    {Q}[\destroy_1,\destroy^\dag_1,\ldots, \destroy_m,\destroy^\dag_m]\xH_{N_1,\ldots N_m} \subset \xH_{N_1+d,\ldots, N_m+d}.
\end{align}
As a consequence, $\xL(\boldsymbol{\sigma}_N)$ coincides with $\xL_{N+d}(\boldsymbol{\sigma}_N)$, where $d=\max(d_{\Hamil}, 2\max_i(d_{i}))$ with $d_{\Hamil}$ the degree of a polynomial generating the Hamiltonian, $d_i$ the degree of $\LL_i$, and $N+d$ denotes the m-uplet $(N_1+d,\ldots N_m+d)$.

\subsection{Some examples}
We showed that it is always possible to compute the space estimate of \cref{sec:sec_1} expressing $\xL_{N+d}$, $\xL_N$ and $\boldsymbol{\rho}_N$ on $\xK_ s^1(\xH_{N+d})$, and neglecting the time-discretization errors.
In this part, we investigate some explicit examples to provide an intuition on the estimates. In \cref{ex:cat,ex:catplusbuffer}, we also compare our estimate against the truncation error to assess the degree of overestimation in the upper bound. We also assume that $\boldsymbol{\rho}_0\in \xK^1_+(\xH_N)$ in all these examples.
\subsubsection{\texorpdfstring{Frequency driven damped oscillator - $\Hamil = u(t) \create \destroy$ and $\cD_{\destroy}$}{Frequency driven damped oscillator}}
\label{ex:aadag}
The Hamiltonian $u(t) \create \destroy$ for a scalar function $u(t)$ is not responsible for any truncation error as $\Proj{N}$ commutes with $\Nop=\create \destroy$ ($\Nop$ being diagonal in the Fock basis). As a consequence, $i[\Hamil-\Hamil_N,\rN]=0$.

Concerning the dissipator, it can be noticed that $\destroy \rN = \Proj{N} \destroy \Proj{N} \rN$ and $\rN \create = \rN \Proj{N} \create \Proj{N} $. Besides, $\create_N \destroy_N \rN=\create \destroy \rN$, so that $\cD_{\destroy}(\rN)=\cD_{\destroy_N}(\rN)$.
As a consequence, $\xL_N(\boldsymbol{\sigma}_N)=\xL(\boldsymbol{\sigma}_N)$ for every $\boldsymbol{\sigma}_N\in \xK^1_s(\xH_N)$ and the truncation is not responsible for any error. Both the truncation error and our estimate are null.

\subsubsection{\texorpdfstring{Driven oscillator - $\Hamil = u(t)(\destroy + \create)$}{Driven oscillator}}
\label{ex:aplusadag}
We have
\begin{align}
    \notag i[\Hamil - \Hamil_N,\boldsymbol{\rho}_N]&=i\Proj{N}^\perp \Hamil \rN +h.c.\\
    \notag &=i\left( \sum_{n=N}^\infty \sqrt{n+1}\ket{n+1}\bra{n} + \sum_{n=N+1}^\infty\sqrt{n+1}\ket{n}\bra{n+1}\right)\boldsymbol{\rho}_N + h.c.\\
    &=i\sqrt{N+1}\ket{N+1}\bra{N}\boldsymbol{\rho}_N + h.c.\ .
\end{align}
Hence, we have to compute
\begin{multline}
    \|[\Hamil-\Hamil_N,\rN(t)]\|_1=|u(t)| \\
    \times\xtr{\sqrt{\left(i\sqrt{N\!+\!1}\ket{N\!+\!1}\bra{N}\rN(t) + h.c.\right)\left(i\sqrt{N\!+\!1}\ket{N\!+\!1}\bra{N}\rN(t) + h.c.\right)^{\dag}}}.
\end{multline}
Using $\rN \ket{N+1}=0$, we have
\begin{align}
    \notag &\left(i\sqrt{N+1}\ket{N+1}\bra{N}\rN(t) + h.c.\right)\left(i\sqrt{N+1}\ket{N+1}\bra{N}\rN(t) + h.c.\right)^{\dag}\\
    &=\rN\ket{N}\bra{N}\rN + \ket{N+1}\bra{N}\rN^2\ket{N}\bra{N+1}.
\end{align}
As $\rN \ket{N}$ and $\ket{N+1}$ are orthogonal vectors, we get
\begin{multline}
    \sqrt{\rN\ket{N}\bra{N}\rN + \ket{N\!+\!1}\bra{N}\rN^2\ket{N}\bra{N\!+\!1}}\\
    = \sqrt{\rN\ket{N}\bra{N}\rN} + \sqrt{\ket{N\!+\!1}\bra{N}\rN^2\ket{N}\bra{N\!+\!1}}.
\end{multline}
Then, as for a rank one symmetric matrix $S$, $\xtr{\sqrt{S}}=\sqrt{\xtr{S}}$, we obtain
\begin{align}
    \notag \|[\Hamil-\Hamil_N,\rN(t)]\|_1=&=|u(t)|\sqrt{N+1} \Bigl( \sqrt{\xtr{
    \rN\ket{N}\bra{N}\rN}} + \sqrt{\bra{N}{\rN(t)}^2 \ket{N}}\Bigr)\\
    &=2|u(t)|\sqrt{N+1} \sqrt{\bra{N}{\rN(t)}^2 \ket{N}}.
\end{align}
From a numerical point of view, the estimate is computationally cheap to compute, as one only needs to compute the Hilbert norm of the N$^{th}$ row of $\boldsymbol{\rho}_N$. Hence, the space estimate obtained in \cref{prop-aposteriori} gives
\begin{align}
    \|\boldsymbol{\rho}(t)-\rN(t)\|_1 \leq \int_0^t 2|u(s)|\sqrt{N+1} \sqrt{\bra{N}{\rN(s)}^2 \ket{N}}ds.
\end{align}
\subsubsection{\texorpdfstring{Dissipative cat-qubit - $\LL = \destroy^2- \alpha^2 \Id$, with $\alpha \in \mathbb{R}$}{Dissipative cat-qubit}}
\label{ex:cat}
The dissipator $\destroy^2- \alpha^2 \Id$ is used for the stabilization of dissipative cat-qubits, see \cite{mirrahimiDynamicallyProtectedCatqubits2014}. Well-posedness and convergence toward the codespace $\operatorname{Span} \{ \ket{\pm \alpha}\bra{\pm \alpha},\ket{\pm \alpha}\bra{\mp \alpha}  \}$ is proved in \cite{azouitWellposednessConvergenceLindblad2016}.
\paragraph{Expression of the estimate}

One starts considering 
\begin{align}
    \notag \cD_{\LL}(\rN(t))-\cD_{\LL_N}(\rN(t))&=  -\frac{\alpha^2}{2}  \big(\sqrt{(N\!+\!1)(N\!+\!2)} \notag\\
    &\quad\times( \ket{N\!+\!2}\bra{N} \rN(t)+ \rN(t) \ket{N}\bra{N\!+\!2})\\
    &+
    \sqrt{N(N\!+\!1)} ( \ket{N\!+\!1}\bra{N\!-\!1} \rN(t)\notag\\
    &\quad+ \rN(t) \ket{N\!-\!1}\bra{N\!+\!1}) \big),
\end{align}
which is an operator supported on $\operatorname*{Span}\{ \rN \ket{N}, \rN \ket{N-1}, \ket{N+1},\ket{N+2}\}$. After some computations, postponed to \cref{eq:cat}, we get
\begin{align}
    \notag \|\cD_{\LL}(\rN)-\cD_{\LL_N}(\rN)\|_1 &= \sqrt{N\!+\!1}\frac{\alpha^2}{2} \notag\\
    &\quad\times\Biggl(\xtr{\sqrt{\rN \Big(\sqrt{N}\ket{N\!-\!1}\bra{N\!-\!1} + \sqrt{N\!+\!2}\ket{N}\bra{N}\Big)\rN}}\\
    & + \sqrt{N}\bra{N\!-\!1}\rN^2\ket{N\!-\!1} + \sqrt{N\!+\!2}\bra{N}\rN^2\ket{N} \Biggr).
\end{align}

Note that this expression is not expensive to numerically compute, as it only requires  performing elementary operations on the last two rows of $\rN$ and computing the eigenvalues of a rank 2 matrix.  

\begin{figure}[hbt]
    \includegraphics[width=0.48\textwidth]{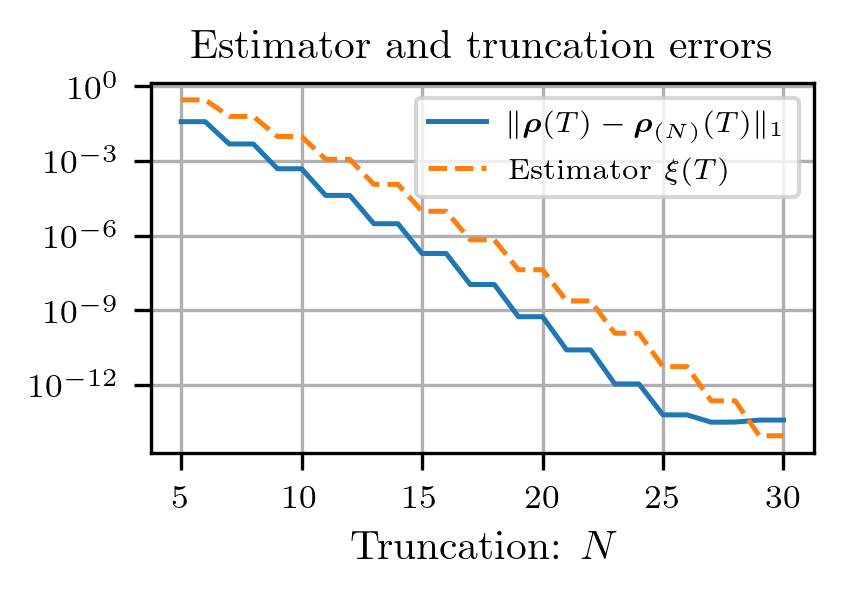}
    \hfill
    \includegraphics[width=0.48\textwidth]{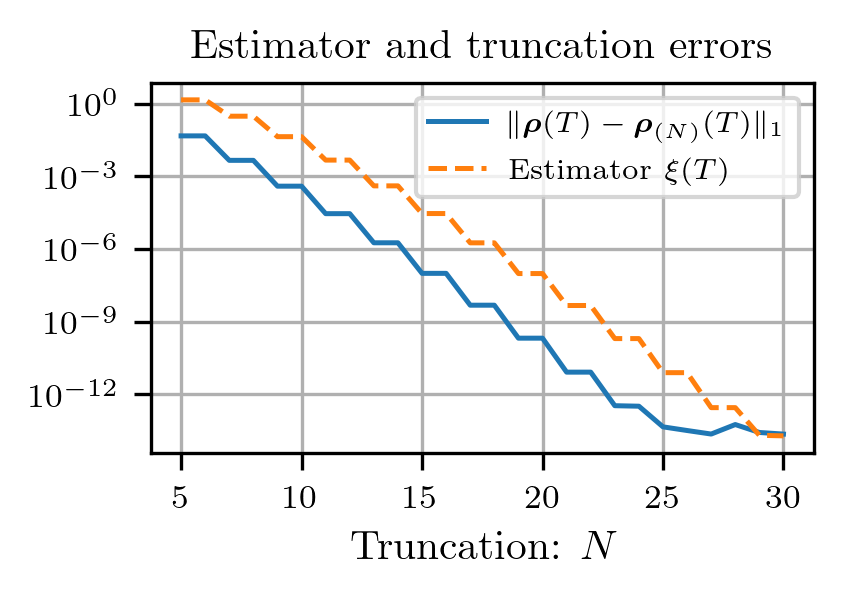}
    \caption{Left plot: Dissipative cat-qubit - $\LL = \destroy^2- \alpha^2 \Id$, $\alpha=1$. Right plot: Dissipative squeezed-cat qubit - $\LL = (\cosh(r)\destroy + \sinh(r)\create)^2 - \alpha^2 \Id$, $\alpha=1$, $r=5/4$. In both cases, $\|\boldsymbol{\rho}(T) - \rN(T)\|_1$ is the truncation error at the end of the simulation described in \cref{ex:cat,ex:squeezecat}, and $\xi(T)$ is the estimate output. Errors and estimates change only at odd numbers due to the preservation of the parity operator $e^{i \pi \create \destroy}$. Saturation occurs below $10^{-13}$ due to the precision of the time solvers.}
    \label{fig:estimator_1D}
\end{figure}

\paragraph{Numerical test}

We simulate the dynamics for various truncations indexed by $N$ in \cref{fig:estimator_1D}, starting with an initial state $\boldsymbol{\rho}_0 = \ket{0}\bra{0}$. Our focus is on evaluating the estimate's performance at time $T = 1$ and comparing it to the exact error $\|r(T)\|_1 = \|\boldsymbol{\rho}(T) - \rN(T)\|_1$. The numerical solution is obtained using an adaptive high-order Runge-Kutta method, with absolute and relative tolerances set below $10^{-14}$. The reference solution is computed with $N = 40$. The estimate for the reference yields an error below $ 4 \cdot 10^{-15}$, indicating that, aside from errors introduced by the time solver and finite numerical precision (both estimated to be on the order of $10^{-13}$), we have a certification that the computed $\|r(T)\|_1$ is accurate to within $4 \cdot 10^{-15}$. Indeed, we have 
\begin{align}
    \notag &\|\rN(T)-\boldsymbol{\rho}_{(40)}(T) \|_1 - \|\boldsymbol{\rho}_{(40)}(T)-\boldsymbol{\rho}(T)\|_1\notag\\
    &\quad\leq\|r(T)\|_1 \leq \|\rN(T)-\boldsymbol{\rho}_{(40)}(T) \|_1 + \|\boldsymbol{\rho}_{(40)}(T)-\boldsymbol{\rho}(T)\|_1, \\
    &\|\boldsymbol{\rho}_{(40)}(T)-\boldsymbol{\rho}(T)\|_1 \leq \xi_{40}(T) \leq 4 \cdot 10^{-15}.
\end{align}

\subsubsection{\texorpdfstring{Dissipative squeezed-cat qubit  - $\LL = (cosh(r)\destroy + sinh(r)\create)^2 - \alpha^2 \Id$, with $\alpha, r \in \mathbb{R}$}{Dissipative squeezed-cat qubit }}
\label{ex:squeezecat}
This dissipator generalizes the previous one (corresponding to $r=0$) used to stabilize squeezed cat states~\cite{Hillmann_2023}. Note that for $r>0$, $\xL(\rN)$ belongs to $\xK^1_s(\xH_{N+4})$ rather than just $\xK^1_s(\xH_{N+2})$.
We repeat the simulations from \cref{ex:cat} with $r=5/4$ and ensure the same accuracy on the reference. Numerical results are reported in \cref{fig:estimator_1D}.

\subsubsection{\texorpdfstring{Dissipative cat-qubit with buffer - $\Hamil = (\destroy^2 - \alpha^2 \Id)\createb + (\create^2 - \alpha^2 \Id)\destroyb$, with $\alpha\in\mathbb{R}$ and $\cD_{\destroyb}$}{Dissipative cat-qubit with buffer}}
\label{ex:catplusbuffer}
This two-modes bosonic system describes the dissipative engineering of two-photon loss for a dissipative cat qubit using a lossy buffer cavity. For physical motivation, we refer to the pioneering article \cite{mirrahimiDynamicallyProtectedCatqubits2014}. Well-posedness and convergence toward the codespace $\operatorname{Span} \{ \ket{\pm \alpha}\bra{\pm \alpha}\otimes \ket{0}\bra{0},\ket{\pm \alpha}\bra{\mp \alpha} \otimes \ket{0}\bra{0} \}$ is proved in \cite{robinConvergenceBipartiteOpen2023}.

As in \cref{ex:aadag}, the dissipator $\cD_{\destroyb}$ does not induce errors, meaning it is enough to focus on the Hamiltonian part. We recall that $\xH_{(n_1,n_2)}=\{\ket{i}\otimes \ket{j} \mid i\leq n_1,j \leq n_2\}$. We can then easily check that $\Hamil\rN\in \xK^1(\xH_{(n_1+2,n_2+1)})$ and that $\Hamil_{(n_1+2,n_2+1)}\rN=\Hamil\rN$. While an explicit expression can be obtained, in practice we simply compute $\Hamil_{(n_1+2,n_2+1)}-\Hamil_{(n_1,n_2)}\in \mathcal{O}(\xH_{(n_1+2,n_2+1)})$, and compute the trace norm (in $\xK^1_s(\xH_{(n_1+2,n_2+1)})$) of $\| (\Hamil_{(n_1+2,n_2+1)}-\Hamil_{(n_1,n_2)}) \rN\|_1$.

\begin{figure}[hbt]
    \centering
    \includegraphics[width=1.0\textwidth]{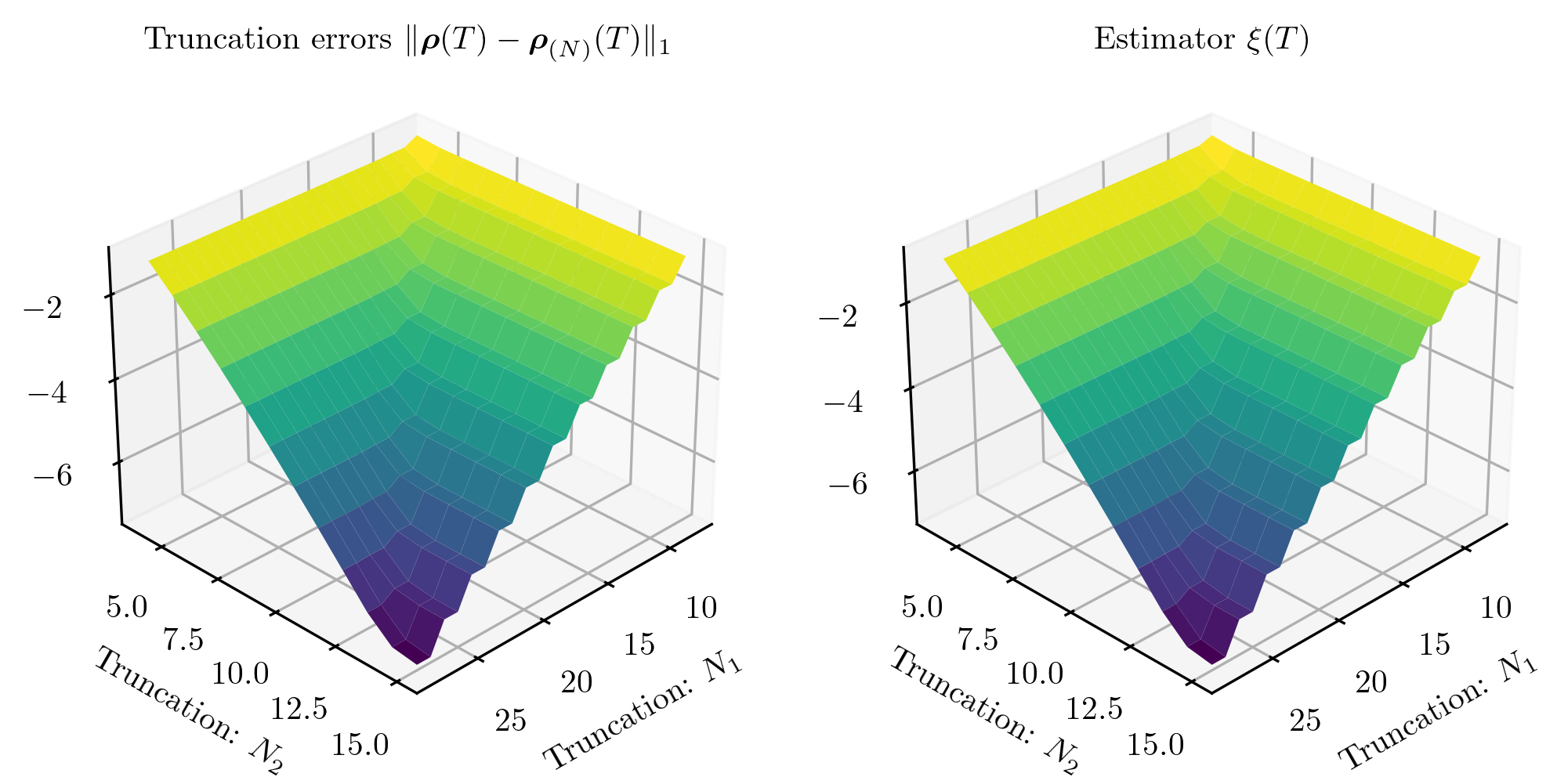}
    \captionsetup{justification=centering}
    \caption{3D plots in log scale of the truncation error $\|\boldsymbol{\rho}(T) - \rN(T)\|_1$ (\textit{left}) and the estimate $\xi(T)$ (\textit{right}). Slices of these plots are reproduced in \cref{fig:estimator_2D_slice}.}
    \label{fig:estimator_2D}
\end{figure}

\begin{figure}[!htbp]
    \centering
    \begin{minipage}[b]{0.49\textwidth}
        \centering
        \includegraphics[width=\linewidth]{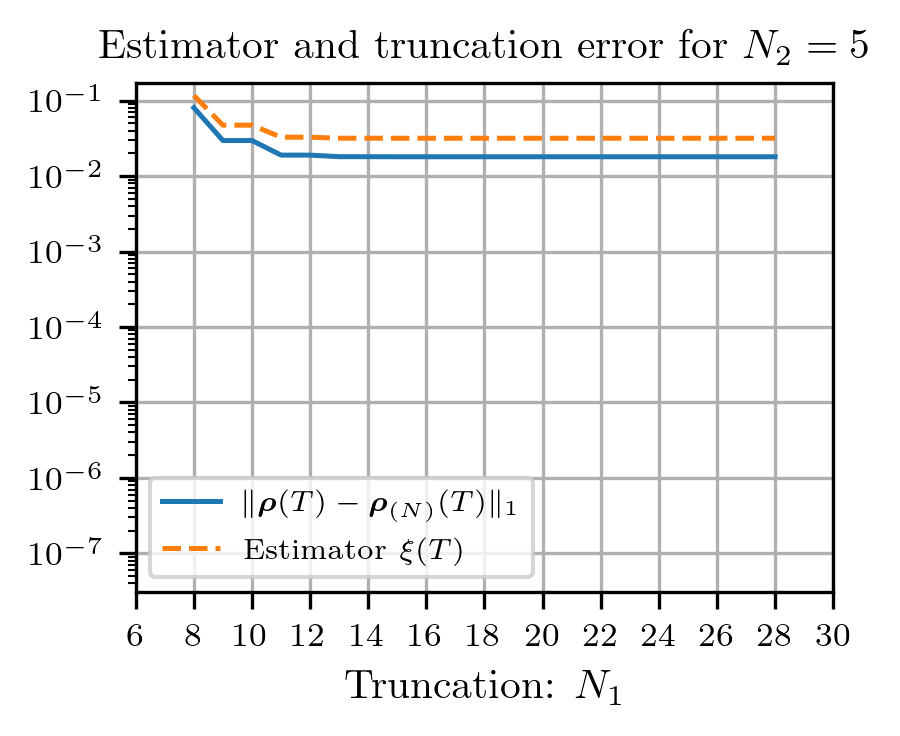}
    \end{minipage}
    \begin{minipage}[b]{0.49\textwidth}
        \centering
        \includegraphics[width=\linewidth]{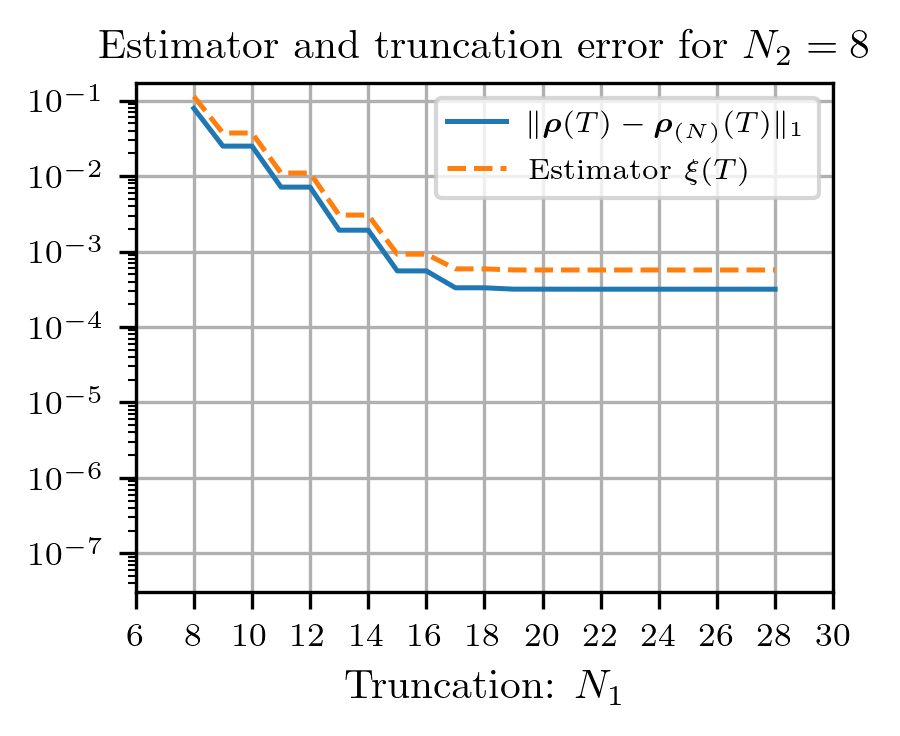}
    \end{minipage}

    \begin{minipage}[b]{0.49\textwidth}
        \centering
        \includegraphics[width=\linewidth]{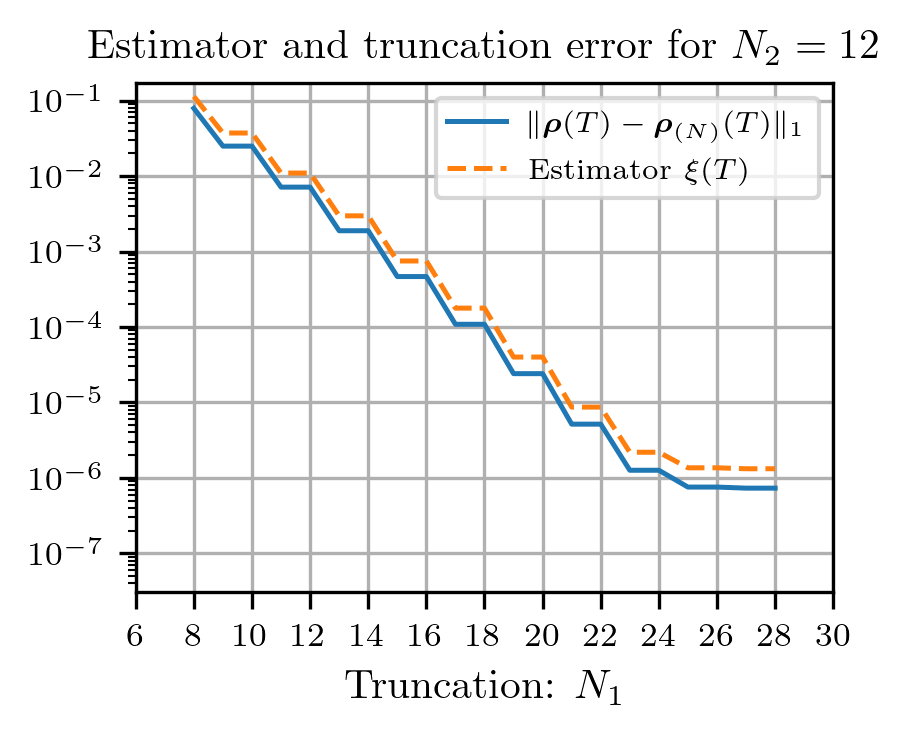}
    \end{minipage}
    \begin{minipage}[b]{0.49\textwidth}
        \centering
        \includegraphics[width=\linewidth]{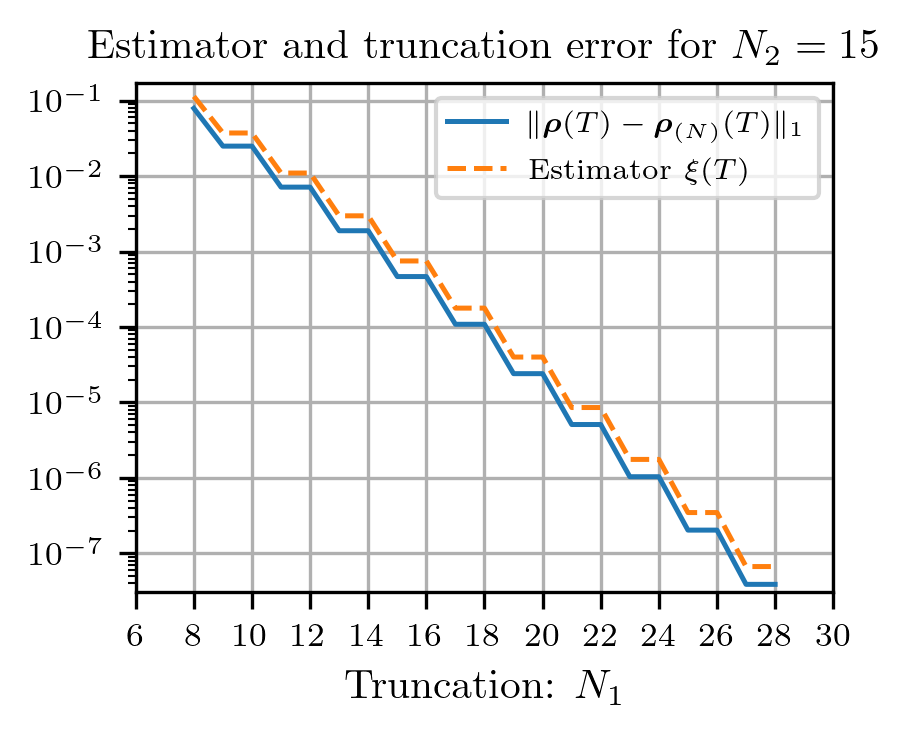}
    \end{minipage}
    
    \caption{Slices of the 3D plot in \cref{fig:estimator_2D} with fixed $N_2$.}
    \label{fig:estimator_2D_slice}
\end{figure}
\paragraph{Numerical test}

We simulate the dynamics for various truncations (from ($N_1 = 8, N_2 = 4$) to ($N_1 = 28, N_2 = 15$)) in \cref{fig:estimator_2D}, starting with an initial state $\boldsymbol{\rho}_0 = \ket{00}\bra{00}$ and setting $\alpha=1$. Our focus is on evaluating the estimate's performance at time $T = 1$ and comparing it to the error $\|r(T)\| = \|\boldsymbol{\rho}(T) - \rN(T)\|_1$. The numerical solution is obtained using an adaptive high-order Runge-Kutta method, with absolute and relative tolerances set below $10^{-14}$. The reference solution is computed with $(n_1 = 40, n_2 = 20)$. The estimate for the reference yields an error below $3 \cdot 10^{-15}$, indicating that, aside from errors introduced by the time solver and finite numerical precision (both estimated to be on the order of $10^{-13}$), we have certification that the computed $\|r(T)\|$ is accurate to within $3 \cdot 10^{-15}$. \cref{fig:estimator_2D_slice} provides  plots of some slices of \cref{fig:estimator_2D} for fixed $N_2$.

\section{Application 2: estimates for Lindbladian with unitary operators involved}
\label{sec_ex2}

\subsection{Unitary operators}
\label{subsec:unitary}

In this section, we compute the \textit{a posteriori} truncation estimate given in \cref{prop-aposteriori} for Lindbladians that involve either a unitary operator as Hamiltonian or more interestingly unitary operators as jump operators.

We are interested in operators for which we know the exact finite truncation. More precisely, we assume that analytic formulas for the coefficients of $\Uop_N=\Proj{N}\Uop\Proj{N}$ are available. For example, the matrix elements $\bra{i}e^{i\eta X}\ket{j}$, for any $\eta \in \mathbb{R}$, and $(i,j)\in \xN^2$, admit an explicit (and numerically stable) formula, see e.g. \cite[Appendix E.3]{PhysRevX.15.011011}. Hence, we have access to the truncated operator $\Uop_N$ exactly. The key estimate for this section is the following Lemma.
\begin{lemma}
    \label{lem_unitary}
    Let $\Uop$ be a unitary operator on $\xH$, and $\mathbf{M}_N$ be an operator on $\xH_N$. We have the following norm equality:
    \begin{equation}
        \label{th:unitary}
        \|\Proj{N}^\perp \Uop \mathbf{M}_N\|_1 = \xtr{\sqrt{\mathbf{M}_N^\dagger(\Id_N - \Uop_N^\dagger \Uop_N)\mathbf{M}_N}}.
    \end{equation}
\end{lemma}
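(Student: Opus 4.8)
The plan is to compute the trace norm $\|\Proj{N}^\perp \Uop \mathbf{M}_N\|_1 = \xtr{\sqrt{(\Proj{N}^\perp \Uop \mathbf{M}_N)^\dagger (\Proj{N}^\perp \Uop \mathbf{M}_N)}}$ directly by simplifying the operator under the square root. First I would write
\begin{align}
    (\Proj{N}^\perp \Uop \mathbf{M}_N)^\dagger (\Proj{N}^\perp \Uop \mathbf{M}_N) = \mathbf{M}_N^\dagger \Uop^\dagger \Proj{N}^\perp \Uop \mathbf{M}_N,
\end{align}
using that $(\Proj{N}^\perp)^\dagger \Proj{N}^\perp = \Proj{N}^\perp$ since $\Proj{N}^\perp$ is an orthogonal projector. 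The key substitution is $\Proj{N}^\perp = \Id - \Proj{N}$, which turns the middle factor into $\Uop^\dagger(\Id - \Proj{N})\Uop = \Id - \Uop^\dagger \Proj{N} \Uop$, where I used unitarity $\Uop^\dagger \Uop = \Id$. Then, because $\mathbf{M}_N = \Proj{N} \mathbf{M}_N = \mathbf{M}_N \Proj{N}$ (it is supported on $\xH_N$), I can sandwich the identity appropriately: $\mathbf{M}_N^\dagger \Uop^\dagger \Proj{N} \Uop \mathbf{M}_N = \mathbf{M}_N^\dagger \Proj{N} \Uop^\dagger \Proj{N} \Uop \Proj{N} \mathbf{M}_N = \mathbf{M}_N^\dagger \Uop_N^\dagger \Uop_N \mathbf{M}_N$ by the definition $\Uop_N = \Proj{N} \Uop \Proj{N}$. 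Similarly $\mathbf{M}_N^\dagger \Id \mathbf{M}_N = \mathbf{M}_N^\dagger \Proj{N} \mathbf{M}_N = \mathbf{M}_N^\dagger \Id_N \mathbf{M}_N$. Combining, the operator under the square root equals $\mathbf{M}_N^\dagger(\Id_N - \Uop_N^\dagger \Uop_N)\mathbf{M}_N$, and taking $\xtr{\sqrt{\cdot}}$ gives the claimed identity.

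One point that needs a little care is that the trace-norm identity requires the operator under the square root to be positive semidefinite, which it is here: it is of the form $\mathbf{A}^\dagger \mathbf{A}$ with $\mathbf{A} = \Proj{N}^\perp \Uop \mathbf{M}_N$. A subtler point is that $\mathbf{M}_N^\dagger(\Id_N - \Uop_N^\dagger \Uop_N)\mathbf{M}_N$ should be read as an operator on the finite-dimensional space $\xH_N$ (equivalently, on $\xH$ with support in $\xH_N$); since $\Id_N - \Uop_N^\dagger \Uop_N \succeq 0$ follows from $\|\Uop_N^\dagger \Uop_N\| \leq \|\Uop\|^2 = 1$, this is consistent, though positivity of this factor alone is not actually needed for the equality — only positivity of the full sandwiched expression, which is automatic.

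I do not expect a serious obstacle: the whole argument is a short chain of algebraic manipulations using $\Proj{N}^\perp = \Id - \Proj{N}$, unitarity of $\Uop$, and the support property of $\mathbf{M}_N$. The only thing to be vigilant about is bookkeeping of projectors — making sure each insertion of $\Proj{N}$ next to $\mathbf{M}_N$ or $\mathbf{M}_N^\dagger$ is justified before collapsing $\Proj{N}\Uop\Proj{N}$ into $\Uop_N$ — and being explicit that $\xtr{\sqrt{\mathbf{A}^\dagger \mathbf{A}}} = \||\mathbf{A}|\|_1 = \|\mathbf{A}\|_1$ is just the definition of the trace norm recalled in \cref{eq_nuclear_norm}.
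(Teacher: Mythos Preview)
Your proof is correct and follows essentially the same approach as the paper: both arguments reduce to the identity $\Proj{N}\Uop^\dagger\Proj{N}^\perp\Uop\Proj{N}=\Id_N-\Uop_N^\dagger\Uop_N$, which the paper obtains via the block decomposition of $\Uop$ with respect to $\xH=\xH_N\oplus\xH_N^\perp$ and you obtain by writing $\Proj{N}^\perp=\Id-\Proj{N}$ and inserting $\Proj{N}$ next to $\mathbf{M}_N$, $\mathbf{M}_N^\dagger$. The remaining step---plugging this into $\xtr{\sqrt{\mathbf{M}_N^\dagger(\cdot)\mathbf{M}_N}}$---is identical in both.
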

\begin{proof}
    We write $\Uop$ using the space decomposition $\xH=\xH_N \oplus \xH_N^\perp$:
    \begin{align}
        \Uop= \begin{pmatrix}
            \Uop_N&\Proj{N} \Uop \Proj{N}^\perp \\
            \Proj{N}^\perp \Uop \Proj{N}&\Proj{N}^\perp \Uop \Proj{N}^\perp 
        \end{pmatrix}.
    \end{align}
    Since $\Uop$ is unitary, $\Proj{N}( \Uop^\dag \Uop )\Proj{N}=\Id_N=\Uop_N^\dagger \Uop_N + (\Proj{N}^\perp \Uop \Proj{N})^\dagger (\Proj{N}^\perp \Uop \Proj{N})$. Hence, $(\Proj{N}^\perp \Uop \Proj{N})^\dagger (\Proj{N}^\perp U \Proj{N}) = \Id_N - \Uop_N^\dagger \Uop_N$.
    As a consequence,
    \begin{align}
        \notag
        \|\Proj{N}^\perp \Uop \mathbf{M}_N\|_1 &= \xtr{\sqrt{\mathbf{M}_N^\dagger (\Proj{N}^\perp \Uop \Proj{N})^\dagger (\Proj{N}^\perp \Uop \Proj{N}) \mathbf{M}_N}} \\
        \label{eq_sqrt}&= \xtr{\sqrt{\mathbf{M}_N^\dagger (\Id_N - \Uop_N^\dagger \Uop_N) \mathbf{M}_N}}.
    \end{align}
\end{proof}
Note that the symmetric operator inside the square root of \cref{eq_sqrt} can be computed and has support on $\xH_N$.

\paragraph{Hamiltonian error involving a unitary operator}
While being quite specific, let us assume that $\Hamil$ is also unitary. In this case
\begin{align}
    \|[\Hamil-\Hamil_N,\rN]\|_1 \leq 2 \|(\Hamil-\Hamil_N)\rN\|_1 = 2 \|\Proj{N}^\perp \Hamil \rN\|_1.
\end{align}
As a consequence, we can use \cref{lem_unitary} to compute the error.
\paragraph{Dissipator error involving a unitary operator}\mbox{}\\
We split into three parts the dissipator's error $(\cD_{\Uop}-\cD_{\Uop_N})(\rN)$, with $\Uop$ a unitary operator on $\xH$.
\begin{align}
    \label{eq_decompoDU}
    \notag
    \|(\cD_{\Uop}-\cD_{\Uop_N})(\rN)\|_1 &\leq \|\Uop \rN \Uop^\dag-\Uop_N \rN \Uop_N^\dag\|_1 \notag\\
    &+ \|\Uop^\dag \Uop \rN- \Uop_N^\dag \Uop_N \rN\|_1 \\
    &+ \|\rN \Uop^\dag \Uop-\rN  \Uop_N^\dag \Uop_N \|_1.
\end{align}
We start by the second and third term of the right-hand side of \cref{eq_decompoDU}.
\begin{align}
    \notag \|\rN \Uop^\dag \Uop-\rN  \Uop_N^\dag \Uop_N \|_1&=\|\Uop^\dag \Uop \rN- \Uop_N^\dag \Uop_N \rN\|_1\notag\\
    &=\|\rN- \Uop_N^\dag \Uop_N \rN\|_1\\
    &= \|(\Id_N-\Uop_N^\dag \Uop_N)\rN\|_1.
\end{align}
This expression can be numerically computed. Focusing on the remaining term, we get
\begin{equation}
    \|\Uop \rN \Uop^\dag - \Uop_N \rN \Uop_N^\dag\|_1 \leq \| \Proj{N}^\perp \Uop \rN \Uop^\dag \Proj{N}^\perp\|_1 + \| \Proj{N} \Uop \rN \Uop^\dag \Proj{N}^\perp\|_1 + \| \Proj{N}^\perp \Uop \rN \Uop^\dag \Proj{N}\|_1.
\end{equation}
The last two terms are equal and can be handled using \cref{th:unitary} with $\mathbf{M}_N=\rN \Uop_N^\dagger$:
\begin{align*}
    \notag \| \Proj{N}^\perp \Uop \rN \Uop^\dag \Proj{N}\|_1 &= \| \Proj{N}^\perp \Uop \rN \Uop^\dag_N\|_1\\
    &= \xtr{\sqrt{\Uop_N \rN \Uop_N^\dagger (\Id_N - \Uop_N^\dagger \Uop_N) \Uop_N \rN \Uop_N^\dagger}}.
\end{align*}
The remaining term is $\| \Proj{N}^\perp \Uop \rN \Uop^\dag \Proj{N}^\perp\|_1$. As $\rN \geq 0$, we also have $\Proj{N}^\perp \Uop \rN \Uop^\dag \Proj{N}^\perp \geq 0$. Hence, the trace norm and the trace of these operators coincide. Then,
\begin{align}
    \notag \xtr{\Proj{N}^\perp \Uop \rN \Uop^\dagger \Proj{N}^\perp}&=\xtr{\Uop \rN \Uop^\dag \Proj{N}^\perp}\\
    \notag &=\xtr{\Uop \rN \Uop^\dag (\Id-\Proj{N})}\\
    &= 1 - \xtr{\Proj{N} \Uop \rN \Uop^\dag \Proj{N}}.
\end{align}
Eventually, we get the following estimate, that can be numerically computed: 
\begin{align}
    \|(\cD_{\Uop}-\cD_{\Uop_N})(\rN)\|_1 &\leq 2\|(\Id_N-\Uop_N^\dag \Uop_N)\rN\|_1 + \| \Proj{N}^\perp \Uop \rN \Uop^\dag_N\|_1 + 1 - \xtr{ \Uop_N \rN \Uop_N^\dag }.
\end{align}
\subsection{Application to a more complex example}
\label{subsec:GKP}
In this section, we study a more complex example involving both unitary operators and polynomials in creation and annihilation operators. More precisely, we study the following dynamics introduced in \cite{PhysRevX.15.011011} for dissipative GKP stabilization:
\begin{equation}
	\label{eq:lindblad-GKP}
	\frac{d}{dt} \boldsymbol{\rho} = \sum_{k=0}^{3} \cD_{\LL_k} (\boldsymbol{\rho}),
\end{equation}
with
\begin{align}
    \LL_0 = \cA \, e^{i\eta\qop} \left(  \Id-\epsilon\Pop\right) -  \Id,\quad
	\mathbf{R}= e^{i\pi {\create \destroy}/2}, \quad
	\LL_k = \mathbf{R}^k \, \LL_0 \, \mathbf{R}^{-k},
    \label{eq:def-LGKP}
\end{align}
where $\mathcal{A},\eta$ and $\epsilon$ are given constant real numbers. We recall that $\qop$ and $\Pop$ are the position and momentum operators whose definition is recalled in \cref{subsec_notation}. Note also that $\mathbf{R}$ is unitary and $\mathbf{R}^4=\Id$. To lighten a little the notations, we introduce $\Uop=e^{i\eta \qop}$, and $\xP=\cA \left(  \Id-\epsilon\Pop\right)$.

Our goal is obtaining a good upper bound on $\|\xL(\rN)-\xL_N(\rN)\|_1$ that can be numerically computed. Because the dissipator $\LL_0$ involves both the polynomial $\xP$ and the unitary operator $e^{i\eta\qop}$, we cannot directly apply the results of \cref{sec_ex1} or \cref{subsec:unitary}.

\paragraph{Tools}
Let us first state a generalization of \cref{th:unitary}. Assume $\xH_{N_1}\subset \xH_{N_2}$, and let us consider $\Uop$ a unitary operator on $\xH$, and $\mathbf{M}_{N_2}\in \mathcal{O}(\xH_{N_2})$.  Note that $\Proj{N_1}^\perp = \Id-\Proj{N_1}=\Id-\Proj{N_2}+\Proj{N_2}-\Proj{N_1}$. Besides as $\Proj{N_2}^\perp$ and $\Proj{N_2}-\Proj{N_1}$ are orthogonal projectors with orthogonal ranges, we have
\begin{align}
	\notag\|\Proj{N_1}^\perp \Uop \mathbf{M}_{N_2}\|_1 &=  \|\Proj{N_2}^\perp \Uop \mathbf{M}_{N_2}\|_1 + \|(\Proj{N_2} - \Proj{N_1}) \Uop \mathbf{M}_{N_2}\|_1.
\end{align}
Besides, using \cref{th:unitary}, we get 
\begin{align}
	\label{th:unitary_mod}
	\|\Proj{N_1}^\perp \Uop \mathbf{M}_{N_2}\|_1&=  \xtr{\sqrt{\mathbf{M}_{N_2}^\dagger(\ \Id_{N_2} - \Uop_{N_2}^\dagger \Uop_{N_2})\mathbf{M}_{N_2}}} + \|(\Proj{N_2} - \Proj{N_1}) \Uop \mathbf{M}_{N_2}\|_1.
\end{align}

\paragraph{Dissipator error}\mbox{}\\
Let us start the analysis with the error term coming from $\LL_0$.
\begin{align}
    (\cD_{\Uop\xP- \Id}-\cD_{\Proj{N} \Uop \xP \Proj{N} -  \Id_N})(\rN)&= \operatorname{\mathbf{I}}_1-\frac{1}{2}(\operatorname{\mathbf{I}}_2 + \operatorname{\mathbf{I}}_2^\dag),
\end{align}
with
\begin{align}
	\operatorname{\mathbf{I}}_1&=(\Uop \xP- \Id) \rN (\xP^\dagger \Uop^\dag- \Id) - \Proj{N}(\Uop \xP- \Id) \rN (\xP^\dagger \Uop^\dag- \Id)\Proj{N},\\
	\operatorname{\mathbf{I}}_2&=(\xP^\dagger \Uop^\dag- \Id) (\Uop \xP- \Id) \rN- \Proj{N}(\xP^\dagger \Uop^\dag- \Id) \Proj{N} (\Uop \xP- \Id) \rN.
\end{align}
We start with $\operatorname{\mathbf{I}}_1$:
\begin{align}
	\notag \operatorname{\mathbf{I}}_1 &= (\Uop \xP \rN \xP^\dagger \Uop^\dag)  + \rN - (\rN \xP^\dagger \Uop^\dag) - (\Uop \xP \rN)\\
	\notag& \quad - \Bigl[(\Proj{N} \Uop \xP \rN \xP^\dagger \Uop^\dag \Proj{N}) + \rN - (\rN \xP^\dagger \Uop^\dag \Proj{N}) - (\Proj{N} \Uop \xP \rN)\Bigr]\\
	 & = \Uop \xP \rN \xP^\dagger \Uop^\dag - \Proj{N} \Uop \xP \rN \xP^\dagger \Uop^\dag \Proj{N} -  (\Proj{N}^\perp \Uop \xP \rN + h.c.).
\end{align}
Similarly,
\begin{align}
	\label{GKP_term2}
	\notag \operatorname{\mathbf{I}}_2&=(\xP^\dagger \Uop^\dag -  \Id)(U\xP -  \Id)\rN - \Proj{N}(\xP^\dagger \Uop^\dag -  \Id)\Proj{N}(\Uop \xP -  \Id)\rN\ \\
	&=(\xP^\dagger\xP - \Proj{N}\xP^\dagger \Uop_{N+1}^\dagger \Proj{N} \Uop_{N+1} \xP \Proj{N})\rN -\Proj{N}^\perp \Uop \xP \rN - \Proj{N}^\perp \xP^\dagger \Uop^\dag \rN.
\end{align}
Thus,
\begin{align}
	\operatorname{\mathbf{I}}_1-\frac{1}{2}(\operatorname{\mathbf{I}}_2 + \operatorname{\mathbf{I}}_2^\dag)&=\mathbf{A}_1+\mathbf{A}_2+\mathbf{A}_3+\mathbf{A}_4,
\end{align}
with
\begin{align}
	\mathbf{A}_1&=\Uop \xP \rN \xP^\dagger \Uop^\dag - \Proj{N} \Uop \xP \rN \xP^\dagger \Uop^\dag \Proj{N} ,\\
	\mathbf{A}_2&=-(\xP^\dagger\xP - \Proj{N}\xP^\dagger \Uop_{N+1}^\dagger \Proj{N} \Uop_{N+1} \xP \Proj{N})\rN,\\
	\mathbf{A}_3&=\frac{1}{2}\Proj{N}^\perp \Uop \xP \rN + h.c.\, ,\\
	\mathbf{A}_4&=\frac{1}{2} \Proj{N}^\perp \xP^\dagger \Uop^\dag \rN + h.c.\, 
\end{align}
We then apply the triangle inequality. It remains to find a way to compute the trace norm of each of the $A_i$. 
First, we get
\begin{align}
	\label{eq_30} &
	\|\mathbf{A}_1\|_1 \leq \| \Proj{N}^\perp \Uop \xP \rN \xP^\dagger \Uop^\dag \Proj{N}^\perp\|_1 + 2\| \Proj{N}^\perp \Uop \xP \rN \xP^\dagger \Uop^\dag \Proj{N}\|_1.
\end{align}
The second term of the right-hand side of \cref{eq_30} can be numerically computed using \cref{th:unitary_mod}, with $\mathbf{M}_{N_2}$ being  $\xP \rN \xP^\dagger \Uop^\dag \Proj{N}\in \mathcal{O}(\xH_{N+1})$. The first term is handled as follows:
\begin{align}
	\notag \| \Proj{N}^\perp \Uop \xP \rN \xP^\dagger \Uop^\dag \Proj{N}^\perp\|_1&=\xtr{\Proj{N}^\perp \Uop \xP \rN \xP^\dagger \Uop^\dag \Proj{N}^\perp}\\
	\notag &=\xtr{\Proj{N}^\perp \Uop \xP \rN \xP^\dagger \Uop^\dag}\\
	\notag &=\xtr{\xP \rN \xP^\dagger}- \xtr{\Proj{N} \Uop \xP \rN \xP^\dagger \Uop^\dag \Proj{N}}.
\end{align}
Then, $\mathbf{A}_2$ and $\mathbf{A}_3$ can be numerically computed as an element of resp. $\mathcal{O}(\xH_{N+2})$ and $\mathcal{O}(\xH_{N+1})$. For $\mathbf{A}_4$, we use the Heisenberg-picture evolution of $\Pop$ under the Hamiltonian $\qop$ to get
\begin{align}
	\notag \xP^\dag \Uop^\dag &=\cA(\Id-\epsilon \Pop) e^{-i\eta \qop}\\
	\notag &=\cA e^{-i\eta \qop}e^{i\eta \qop}(\Id-\epsilon \Pop) e^{-i\eta \qop}\\
	&=\cA e^{-i\eta \qop}(\Id-\epsilon \Pop - \epsilon\eta \qop).
\end{align}
So that we obtain  
\begin{align}
	\|\mathbf{A}_4\|_1\leq  \cA\|\Proj{N}^\perp e^{-i\eta \qop}(\Id-\epsilon \Pop - \epsilon \eta \qop)\rN\|_1.
\end{align}
As a consequence, we get $\|(\cD_{\LL_0}-\cD_{\Proj{N} \LL_0 \Proj{N}})(\rN)\|_1\leq f(\rN)$ with
\begin{align}
	\notag f(\rN)&=
	\xtr{\xP \rN \xP^\dagger}- \xtr{\Proj{N} \Uop \xP \rN \xP^\dagger \Uop^\dag \Proj{N}}+ 2\| \Proj{N}^\perp \Uop \xP \rN \xP^\dagger \Uop^\dag \Proj{N}\|_1\\
	\notag &+ \|(\xP^\dagger\xP - \Proj{N}\xP^\dagger \Uop_{N+1}^\dagger \Proj{N} \Uop_{N+1} \xP \Proj{N})\rN\|_1\\
	\notag &+\|\Proj{N}^\perp \Uop \xP \rN\|_1\\
	&+\cA\|\Proj{N}^\perp \Uop^\dag(\Id-\epsilon \Pop-\epsilon \eta \qop)\rN\|_1.
\end{align} 
To compute the errors due to the dissipators $\LL_k$, $1\leq k\leq 3$, note that as the unitary $\mathbf{R}$ commutes with $\create \destroy$, it commutes with its spectral projectors $\Proj{N},\Proj{N}^\perp$ and $\Proj{N+1}$. One can then check that
\begin{align}
	(\cD_{\LL_k}-\cD_{\Proj{N} \LL_k \Proj{N}})(\rN)=\mathbf{R}^k (\cD_{\LL_0}-\cD_{\Proj{N} \LL_0 \Proj{N}})(\mathbf{R}^{-k}\rN \mathbf{R}^{k})\mathbf{R}^{-k}.
\end{align}
Hence,
\begin{align}
	\|(\cD_{\LL_k}-\cD_{\Proj{N} \LL_k \Proj{N}})(\rN)\|\leq f(\mathbf{R}^{-k}\rN \mathbf{R}^k).
\end{align}
Eventually, we deduce that
\begin{align}
	\label{eq:estimate}
	\|\xL(\rN)-\xL_N(\rN)\|_1 \leq \sum_{k=0}^3 f(\mathbf{R}^{-k}\rN \mathbf{R}^k).
\end{align}

\paragraph{Numerical test}

\begin{figure}[hbt]
    \centering
    \includegraphics{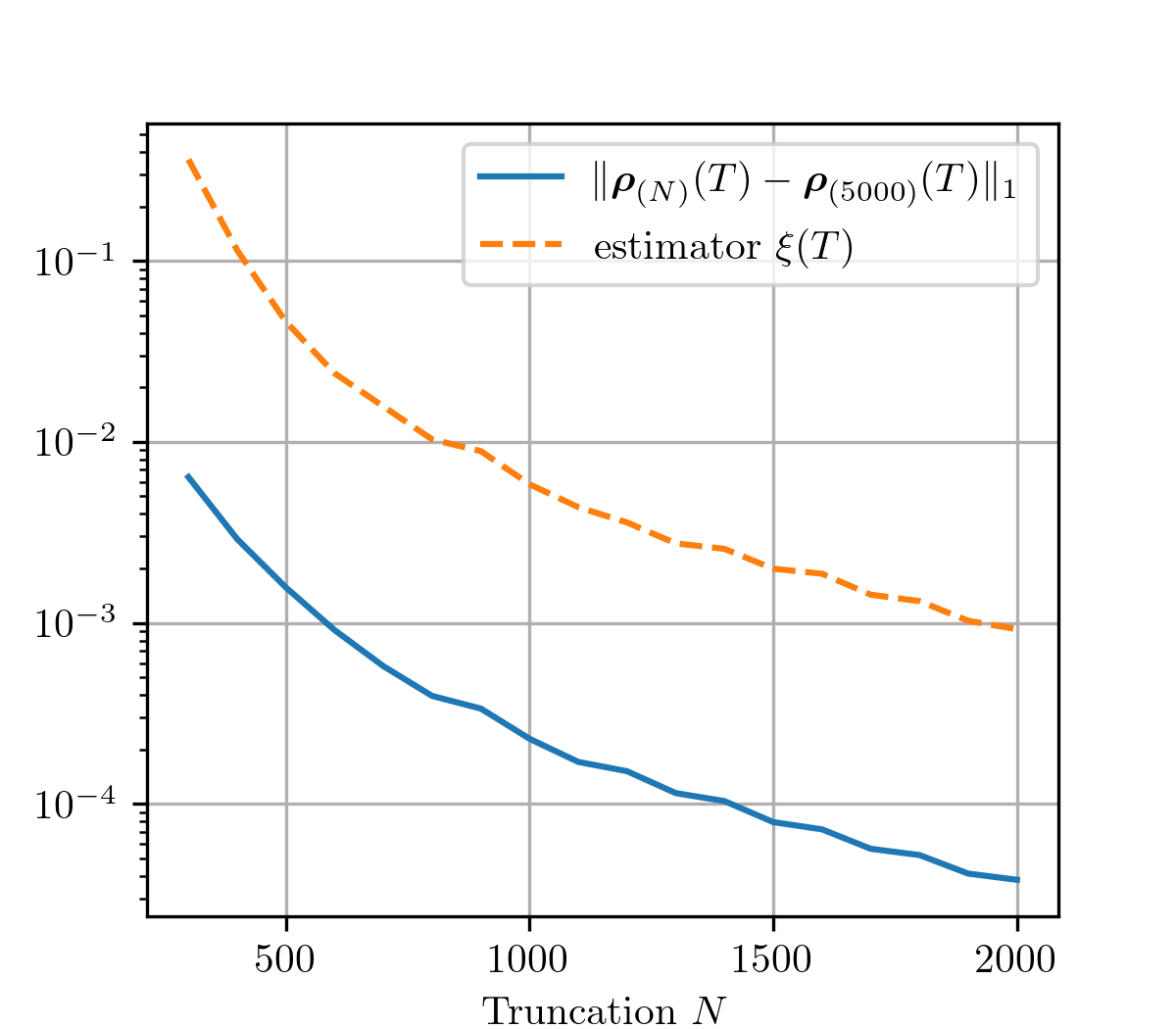}
    \captionsetup{justification=centering}
    \caption{$\|\boldsymbol{\rho}_{(5000)}(T) - \boldsymbol{\rho}_N(T)\|_1$ for various $N$ compared to the estimate $\xi(T)$ using \cref{eq:estimate}, for the simulations described at the end of \cref{subsec:GKP}.}
    \label{fig:estimator_GKP}
\end{figure}

We simulate the solution of \cref{eq:lindblad-GKP} with parameters $\epsilon=0.15$ and $\eta=2\sqrt{\pi}$ for various truncations in \cref{fig:estimator_GKP}, initiating the system in state $\boldsymbol{\rho}_0 = \ket{0}\bra{0}$. Time integration is performed using a second-order CPTP scheme, as described in \cite{robin2025unconditionallystabletimediscretization}, up to $T = 2\frac{1}{\epsilon \eta}$ with a fixed time step $\delta t = 5 \cdot 10^{-4} \cdot T$. We compare the results to the solution at $N=5000$ in \cref{fig:estimator_GKP}.

Due to the prohibitive computational cost of computing numerous trace-norms for such high dimensions, we were unable to perform the estimate for $N=5000$. Instead, we employ a 'naive' approach, comparing the results with those obtained for $N=4000$. This comparison yields a difference of $3 \cdot 10^{-6}$, providing a measure of confidence in our reference solution.

\subsection{Cosine operator}
\label{app-cos.tex}
We provide here an additional estimate related to unitary operators. In superconducting circuits, Josephson Junctions \cite{JOSEPHSON1962251} are commonly used non-linear elements. Their effect on the system Hamiltonian is typically to introduce a term of the form $\cos(\mathbf{O})$ (see e.g., \cite{devoretcQED}), where $\mathbf{O}$ is a self-adjoint operator, usually a sum of position and/or momentum operators acting on different modes. We assume that we have access to the truncation of the unitary operators $(e^{i\mathbf{O}})_N$, $(e^{2i\mathbf{O}})_N$, and their Hermitian conjugates, which is feasible for linear combinations of position and momentum operators. Denoting $\Uop=e^{i\mathbf{O}}$, we have $\cos(\mathbf{O}) = \frac{ \Uop + \Uop^\dag}{2}$.

Let us show that we can compute $\|(\cos(\mathbf{O}) - {(\cos(\mathbf{O}))}_N) \rN\|_1$. Using the decomposition $\xH=\xH_N\oplus \xH_N^\perp$, we introduce the notations: 
\begin{align}
    \Uop=
    \begin{pmatrix}
        \Uop_N&\Proj{N} \Uop \Proj{N}^\perp \\
        \Proj{N}^\perp \Uop \Proj{N}&\Proj{N}^\perp \Uop \Proj{N}^\perp 
    \end{pmatrix}\eqqcolon
    \begin{pmatrix}
        \mathbf{A}&\mathbf{C} \\
        \mathbf{B}&\mathbf{D}
    \end{pmatrix}.
\end{align}
We can now compute
\begin{align}
    \notag \|(\cos(\mathbf{O}) - {(\cos(\mathbf{O}))}_N) \rN\|_1 &= \frac12 \|((\Uop + \Uop^\dag) - (\Uop_N + \Uop_N^\dagger) )\rN\|_1\\
    \notag &= \frac12 \|(\Proj{N}^\perp \Uop + \Proj{N}^\perp \Uop^\dag)\rN\|_1\\
    \notag &= \frac12 \xtr{\sqrt{\rN(\mathbf{B}^\dagger + \mathbf{C})(\mathbf{B} + \mathbf{C}^\dagger)\rN}}\\
    &= \frac12 \xtr{\sqrt{\rN(\mathbf{B}^\dagger \mathbf{B} + \mathbf{C} \mathbf{C}^\dagger + \mathbf{B}^\dagger \mathbf{C}^\dagger + \mathbf{C}\mathbf{B})\rN}}.
\end{align}
As in \cref{th:unitary}, we have the following equalities that are obtained using $\Uop^\dag \Uop = \Uop \Uop^\dag = \Id$:
\begin{align}
        \mathbf{B}^\dagger \mathbf{B} &= \Id - \mathbf{A}^\dagger \mathbf{A},\\
        \mathbf{C} \mathbf{C}^\dagger &= \Id - \mathbf{A} \mathbf{A}^\dagger.
\end{align}
Hence, it remains to obtain $\mathbf{C}\mathbf{B}$ and its Hermitian conjugate. To this aim, we simply use that $\Proj{N}\Uop^2\Proj{N}=\mathbf{A}^2 + \mathbf{C}\mathbf{B}$. As a consequence, we obtain the following expression:
\begin{align}
    \|(\cos(\mathbf{O}) - {(\cos(\mathbf{O}))}_N) \rN\|_1 &= \frac12 \xtr{\sqrt{\rN(\Id - \Uop_N^\dagger \Uop_N + {(\Uop^2)}_N - (\Uop_N)^2 + h.c )\rN}}.
\end{align}

\section{Application 3: Space-adaptive solver for polynomial operators on bosonic modes}
\label{sec_faster}
\subsection{Dynamical reshapings, single mode}
\label{sec:reshapings}
Having an error estimate of the space truncation error not only allows us to bound the simulation's final error but also to monitor it throughout the time integration process. With this, we can detect when the truncated space is too small --causing significant error-- or overly large --resulting in wasted resources--. With this in mind, we developed an adaptive solver that dynamically adjusts the truncation size.
For the simulation on one bosonic mode, we propose the following algorithm which takes the following inputs:
\begin{enumerate}
\item $N_0\in \xN$, the initial truncation,
\item $\boldsymbol{\rho}_0\in \xK^1_+(\xH_{N_0})$ the initial state,
\item $T>0$ the final time, $\text{\texttt{space\_tol}}$ the space error tolerance, and $\text{\texttt{time\_tol}}$ the time solver tolerance (assumed to be small enough compared to $\text{\texttt{space\_tol}}$).
\item The functions $(t,N)\mapsto \Hamil_N(t)$ and $(t,N)\mapsto \LL^i_N(t)$ to construct $\xL_N$, as well as the required integer $w$ ensuring that $\xL(\rN)=\xL_{N+w}(\rN)$.
\item The parameters $w>1$ controlling the criteria for downsizing, and the decreasing and increasing size parameters $ n_-,n_+\in \xN$.
\end{enumerate}
We define $adaptive\_solve\_one\_step(N,\boldsymbol{\rho},t)$ to be a function that solves one discretisation step of the ODE $\frac{d}{dt}\boldsymbol{\rho}= \xL_N(\boldsymbol{\rho})$ with tolerance $\text{\texttt{time\_tol}}$. This function returns the chosen value of the time step and the value of the state at the following step.
We then perform the \cref{algo_1mode}.

\begin{algorithm}[h!]
\begin{algorithmic}[1]
    \State $N\gets N_0$
    \State $t\gets 0$
    \State $\xi \gets 0$
    \State $\boldsymbol{\rho} \gets \boldsymbol{\rho}_0$
    \While {$t \leq T$}
    \State $\delta \boldsymbol{\rho},\delta t \gets adaptive\_solve\_one\_step(N,\boldsymbol{\rho},t)$
    \State $\delta \xi \gets \| \xL(\boldsymbol{\rho}+\delta \boldsymbol{\rho})- \xL_N(\boldsymbol{\rho}+\delta \boldsymbol{\rho})\|_1$
        \If{$\xi+\delta \xi< (t+\delta t)/T* \text{\texttt{space\_tol}}$}
            \Comment{We accept the step}
            \State $\boldsymbol{\rho} \gets \boldsymbol{\rho} +\delta \boldsymbol{\rho}$
            \State $t \gets t +\delta t$
            \State $\xi \gets \xi +\delta \xi$
            \If{$\xi + \delta \xi + \|\boldsymbol{\rho}-\Proj{N-n_-}\boldsymbol{\rho} \Proj{N-n_-}\|_1 < (t+\delta t)/T* \text{\texttt{space\_tol}}/w$}\\
            \Comment{We downsize the state $\boldsymbol{\rho}$}
                \State $\boldsymbol{\rho} \gets \Proj{N-n_-} \boldsymbol{\rho} \Proj{N-n_-}$ \Comment{We reallocate $\boldsymbol{\rho}$ to a smaller matrix by deleting its tail}
                \State $ N \gets N-n_-$
                \State $\xi \gets \xi  + \|\Proj{N-n_-}\boldsymbol{\rho} \Proj{N-n_-}\|_1$
            \EndIf
        \Else \Comment{We rejected the step}
            \While{$\xi+\delta \xi< (t+\delta t)/T* \text{\texttt{space\_tol}}$}
                \State $\boldsymbol{\rho} \gets \Proj{N+n_+}\boldsymbol{\rho} \Proj{N+n_+}$ \Comment{We reallocate $\boldsymbol{\rho}$ on a bigger matrix by adding zeros}
                \State $N \gets N+n_+$
                \State $\delta \boldsymbol{\rho},\delta t \gets adaptive\_solve\_one\_step(N,\boldsymbol{\rho},t)$
                \State $\delta \xi \gets \| \xL(\boldsymbol{\rho}+\delta \boldsymbol{\rho})- \xL_N(\boldsymbol{\rho}+\delta \boldsymbol{\rho})\|_1$
            \EndWhile
            \Comment{We have accepted the step}
            \State $\boldsymbol{\rho} \gets \boldsymbol{\rho} +\delta \boldsymbol{\rho}$
            \State $t \gets t +\delta t$
            \State $\xi \gets \xi +\delta \xi$
        \EndIf

    \EndWhile
\end{algorithmic}
\caption{Space-adaptive solver}
\label{algo_1mode}
\end{algorithm}
    Observe that $\xi$ approximately solves the ODE
    \begin{align}
        \frac{d}{dt} \xi = \| \xL(\boldsymbol{\rho}) - \xL_N(\boldsymbol{\rho}) \|_1,& \ \xi(0)=0
    \end{align}
    which implies that, neglecting time-discretization error, it bounds the space-truncation error. The algorithm ensures that $\xi(t) \leq t \cdot \text{\texttt{space\_tol}} / T$, while adaptively trying to reduce the size of $\boldsymbol{\rho}$ to accelerate computation, albeit with decreased accuracy, when $\xi(t) \leq t \cdot \text{\texttt{space\_tol}} / (d \cdot T)$.
    
    Additionally, note that the time step is governed by the ODE of $\boldsymbol{\rho}$. To enhance the robustness of the interpolation, $\xi$ is updated implicitly using
    \begin{align}
        \xi(t + \delta t) - \xi(t) \approx \delta t \| \xL(\boldsymbol{\rho}(t + \delta t)) - \xL_N(\boldsymbol{\rho}(t + \delta t)) \|_1 .
    \end{align}

    \cref{fig:reshaping_overview} demonstrates the result over time of \cref{algo_1mode} when applied to the example described in \cref{ex:cat}. Initially, a large truncation is used (blue plots), which the algorithm subsequently reduces to enhance speed. Conversely, when starting with a low truncation (red plots), the algorithm increases it to improve accuracy.

    \begin{figure}[hbt]
        \centering
        \includegraphics[width=0.45\textwidth]{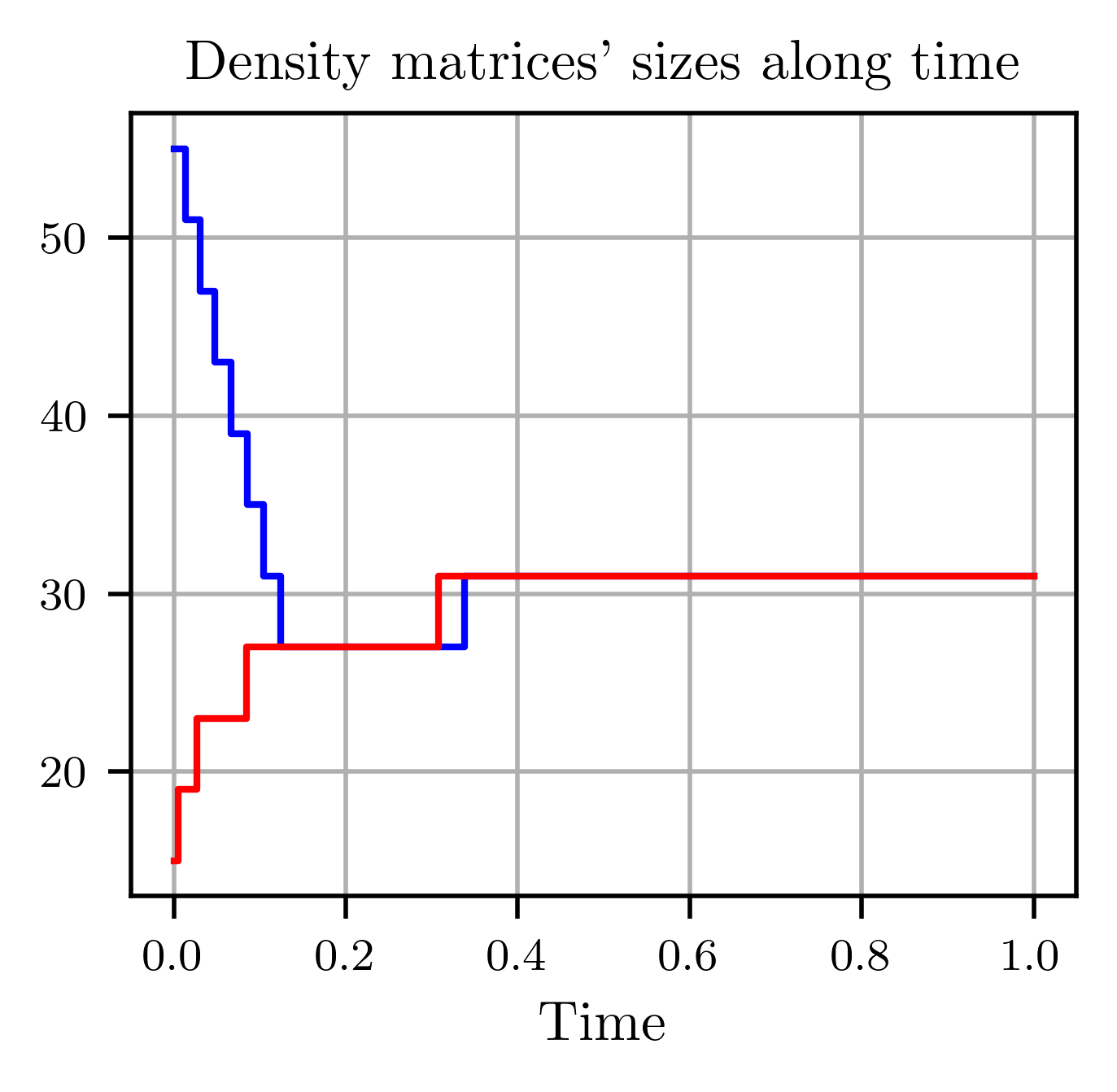}
        \hfill
        \includegraphics[width=0.45\textwidth]{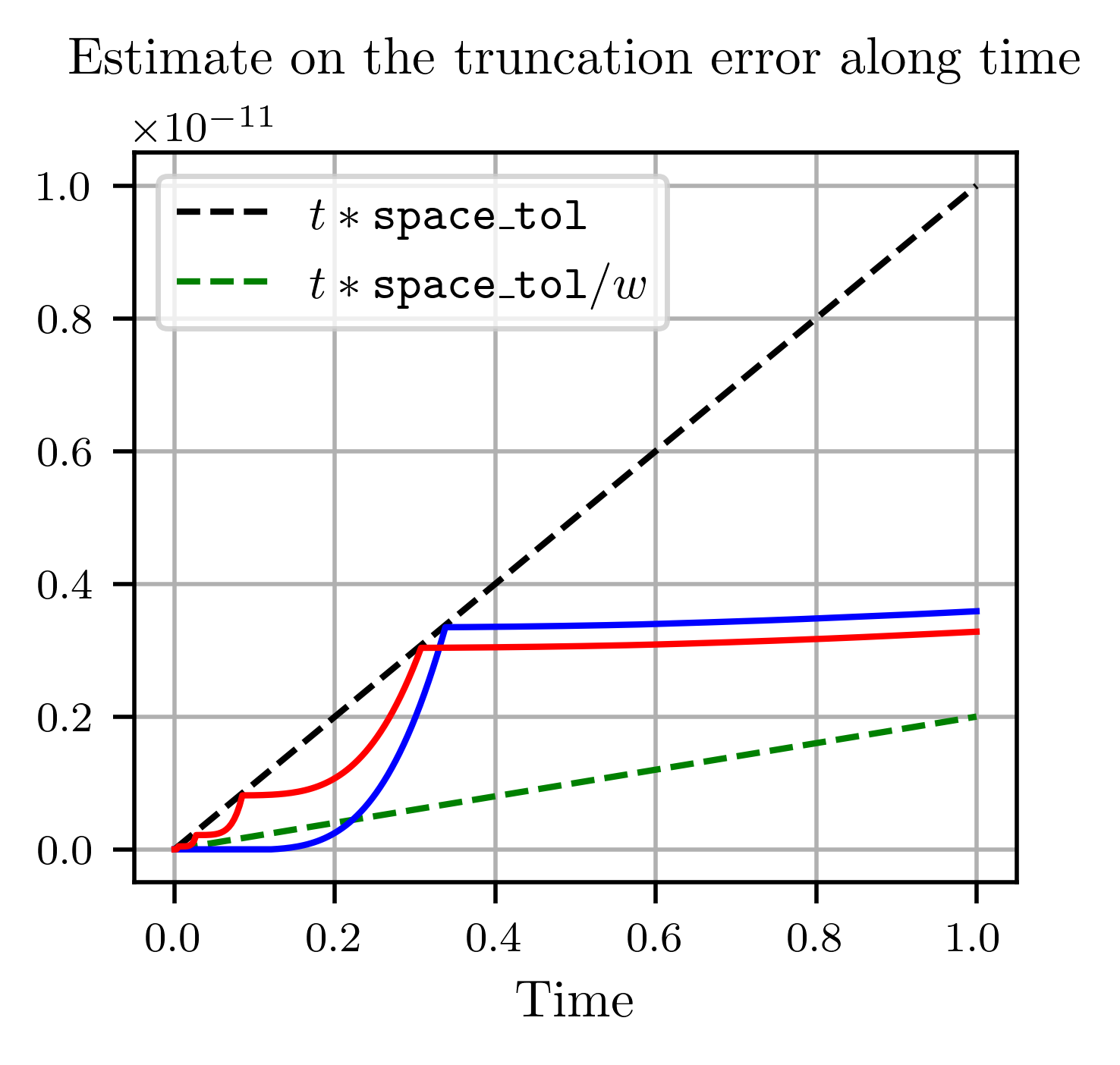}
    \caption{Left plot: Evolution of the sizes of the density matrices for two simulations, in red when starting from a small truncation $N=15$ and in blue when starting from a large truncation $N=55$. Right plot: The value of the estimators associated with the two simulations mentioned on the left figure together with the upper limit (black) and lower limit (green) for extending and reducing the matrix size. $w$ is 5 on this figure.
    Evolution of the size of $\rN(t)$ and the value of the estimator $\xi$ when following \cref{algo_1mode} on the example described in \cref{ex:cat} with two different initial truncations. Parameters of the simulations are $w=5$, $\text{\texttt{space\_tol}}=$1e-11, $n_+=4$, $n_-=4$, the time solver is an adaptive $\text{4}^{th}$ order Runge-Kutta with $\text{\texttt{time\_tol}}$ set at 1e-14.\\
    The red plots are associated to a simulation starting with a small truncation. In this case, the space estimates hit the upper bound for $\text{\texttt{space\_tol}}*t$ multiple times, leading to an enlargement of the truncation until the estimator is stabilized with a truncation size of 31. On the blue plots, associated with initial large truncation, we observe that the truncation size shrinks until its estimator hits the upper bound $\text{\texttt{space\_tol}}*t$, then it is enlarged one more time before stabilizing again to a size of 31. }
    \label{fig:reshaping_overview}
    \end{figure}

\subsection{Dynamical reshapings, several modes}
\label{sec:dynamical_severalmodes}
When dealing with $m$ bosonic modes, the previous algorithm can be straightforwardly implemented by replacing the scalar $N$ with the $m$-tuple $(N_1, \ldots, N_m)$. By fixing $n_\pm = (n_{1,\pm}, \ldots, n_{m,\pm})$, we obtain a direction to expand and reduce the size of the density matrix. However, the reduced Hilbert space $\xH_{(N_1, \ldots, N_m)} = \{ \otimes_{j=1}^m \ket{k_j} \mid 0 \leq k_j \leq N_j \}$ is not always a good choice for simulations. For example, instead of bounding the number of excitations of each mode separately, we can bound the total number of excitations, leading to the following definition: $\xH_{N_{\textrm{tot}}} = \{ \otimes_{j=1}^m \ket{k_j} \mid 0 \leq \sum_{j=1}^m k_j \leq N_{\textrm{tot}} \}$.

These choices are problem-dependent. For example, in \cref{ex:catplusbuffer}, the Hamiltonian includes a two-photon exchange term $(\create)^2 \destroyb + \destroy^2 \createb$. As this term commutes with the operator $\frac{\create \destroy}{2}+ \createb \destroyb$, it leads to the natural choice of truncation $\xH_{N_{\textrm{cat}}} = \{ \ket{k_1} \otimes \ket{k_2} \mid 0 \leq k_1/2 + k_2 \leq N_{\textrm{cat}} \}$. Note that in \cref{fig:estimator_2D}, where the truncated Hilbert space is $\xH_{(N_1,N_2)}$, the truncation error is mostly determined by biggest $N_\textrm{cat}$ such that $\xH_{N_\textrm{cat}}\subset \xH_{N_1,N_2}$. \cref{fig:reshaping_2D_overview} illustrates the result of the adaptation of \cref{algo_1mode} with the single truncation parameter $N_{\textrm{cat}}$.
\begin{figure}[h!]
    \centering
    \includegraphics[width=0.45\textwidth]{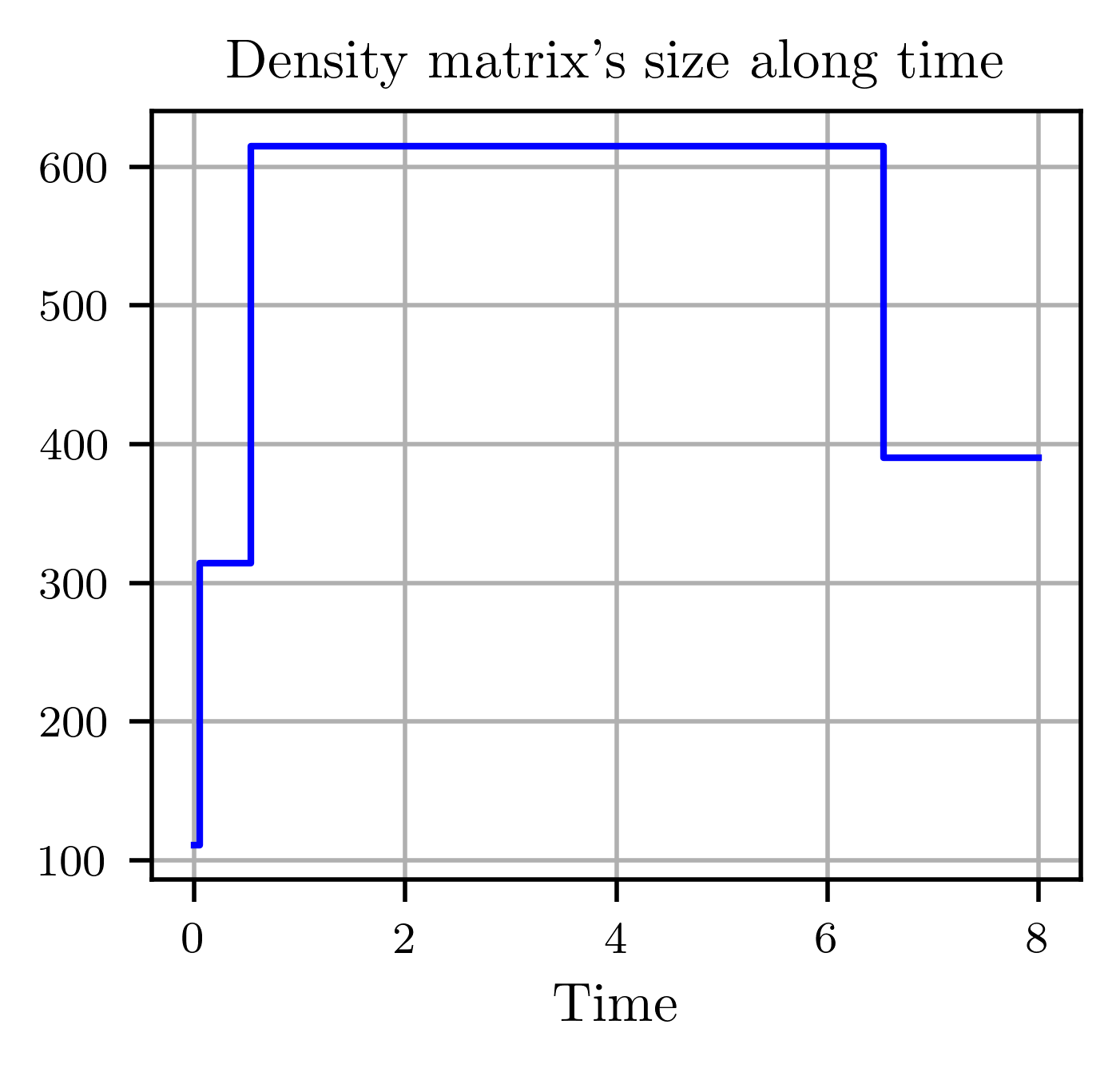}
    \hfill
    \includegraphics[width=0.45\textwidth]{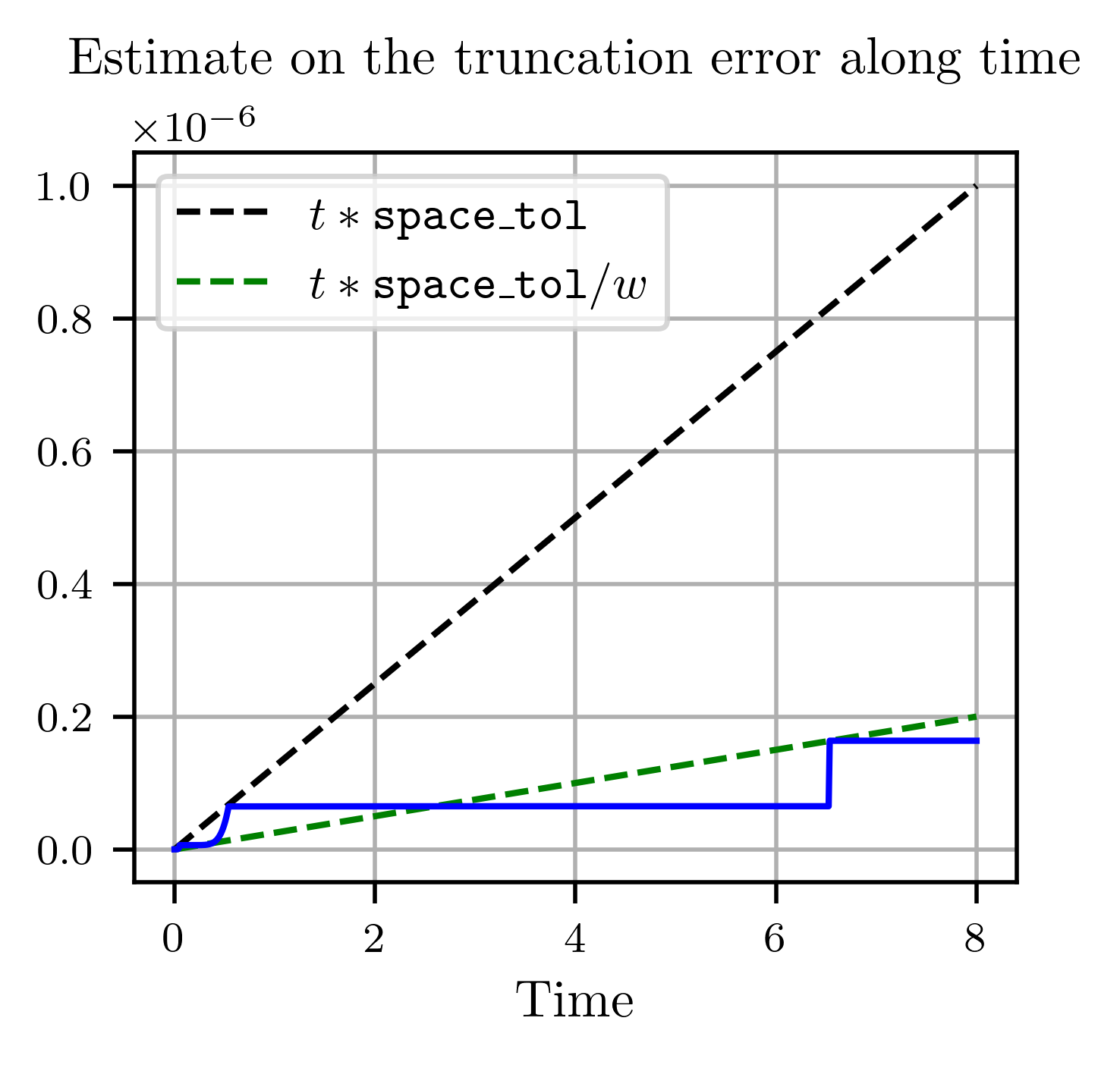}
    \caption{Left plot: Evolution of the size of the Hilbert space. Right plot: The value of the estimator associated with the simulation described below together with the upper limit (black) and lower limit (green) for extending and reducing the matrix size. Evolution of the size of $\rN(t)$ and the value of the estimator $\xi$ when following \cref{algo_1mode} on a slightly different system than the example described in \cref{ex:catplusbuffer}: The parameter $\alpha$ is set at $1.5$ for $t$ in $(0,1.5)$, then $\alpha=0$ until the end. The parameters of the simulations are $w=5$, $\text{\texttt{space\_tol}}=$1e-11, $n_+=7$, $n_-=5$, the time solver is an adaptive $\text{4}^{th}$ order Runge-Kutta with $\text{\texttt{time\_tol}}$ set at 1e-14.\\
    As the initial state is $\rho_0=\ket{0}\bra{0}\otimes \ket{0}\bra{0}$, setting $\alpha=1.5$ increases the population of the excited Fock states, meaning that the required truncation should increase. In a second time, the state should converge toward the vacuum, meaning that it is interesting to decrease the size of the truncated Hilbert space. In the simulation, we start with $\xH_{N_{\textrm{cat}}} = \{ \ket{k_1} \otimes \ket{k_2} \mid 0 \leq k_1/2 + k_2 \leq N_{\textrm{cat}} \}$ with $N_{\textrm{cat}}=6$, we observe that the truncation is first increasing to capture the true solution up to the estimate's tolerance (here 1e-6). When $\alpha$ is set to 0, no error is made due to the dynamics. Around $t=6.75$, the density matrix is truncated, leading to a loss of the truncated information. We have to add $\|\rN(t)-\Proj{N-n_-} \rN(t) \Proj{N-n_-}\|_1$ to the estimate, this is the jump one can see.
    }
    \label{fig:reshaping_2D_overview}
\end{figure}

\subsection{\textsc{Dynamiqs\_adaptive}, a new library to perform space adaptive simulations}
\label{sub_sec:dynamiqs_adaptive}
The first author developed an extension of the library \textsc{Dynamiqs} \cite{guilmin2024dynamiqs}, which is a python library for high-performance quantum systems simulation, using JAX and GPU acceleration. This new library is named \textsc{dynamiqs\_adaptive} and implements the following features:
\begin{itemize}
    \item Running a bosonic quantum simulation with any Runge-Kutta (possibly time-adaptive) solvers, with a theoretical warranty on the truncation errors made by the simulation, using the estimator developed above \cref{eq_estimate_space} and showcased in \cref{sec_ex1} and \cref{sec_faster}.
    \item Space-adaptive simulations using \cref{algo_1mode} for one or several bosonic modes with polynomial operators. Several options are available for the space truncations on multi-modes simulation.
\end{itemize}
	\textsc{dynamiqs\_adaptive} is available on github at the following address \url{https://github.com/etienney/dynamiqs_adaptive/} and is distributed under the open-source license Apache 2.0.

 \section{Time and space-time estimates}
\label{sec:time_estimator}

Up to this point, we have focused solely on the space-truncation error. In this part, we do not assume that we have access to $(\rN(t))_{t\geq0}$ but only to the discrete time trajectory $(\boldsymbol{\rho}_{n,\delta t})_{n\geq 0}$. Note that the space estimate we derived in \cref{prop-aposteriori} cannot be easily combined with a time estimate. Indeed, if $\xL$ is an unbounded operator, it is not continuous from $\xK^1$ to $\xK^1$. This implies that $\|\xL(\tilde{ \boldsymbol{\rho}}) - \xL(\boldsymbol{\rho})\|_1$ might be large even for arbitrarily small $\|\tilde{ \boldsymbol{\rho}} - \boldsymbol{\rho}\|_1$, preventing a naive approximation of the integral in \cref{prop-aposteriori} by a $\xK^1$-approximation of $(\rN(t))_{t\geq0}$. We address this issue by presenting space-time estimates. We start by proving that the total distance between the continuous solution $e^{T\xL}\boldsymbol{\rho}_0$ and the space and time discretized approximation $\xF_{\delta t}^{N_{step}}(\boldsymbol{\rho}_0)$ is smaller than the sum of errors made in each time-step: assuming that $N_{step}\delta t =T$, we have the following inequalities that start with a telescoping argument:
\begin{align}
    \notag \|e^{T\xL}\boldsymbol{\rho}_0- \xF^{N_{step}}_{\delta t}(\boldsymbol{\rho}_0)\|_1 &= \left\|\sum_{n=0}^{N_{step}-1} e^{(N_{step}-n)\delta t \xL} \xF^{n}_{\delta t}\boldsymbol{\rho}_0- e^{(N_{step}-n-1)\delta t \xL} \xF^{n+1}_{\delta t}\boldsymbol{\rho}_0 \right\|_1\\
    \notag &\leq \sum_{n=0}^{N_{step}-1} \left\| e^{(N_{step}-n-1)\delta t \xL} (e^{\delta t \xL} \xF^{n}_{\delta t}\boldsymbol{\rho}_0- \xF^{n+1}_{\delta t}\boldsymbol{\rho}_0) \right\|_1\\
    &\leq \sum_{n=0}^{N_{step}-1} \left\|e^{\delta t \xL} \xF^{n}_{\delta t}\boldsymbol{\rho}_0- \xF^{n+1}_{\delta t}\boldsymbol{\rho}_0 \right\|_1,
\end{align}
where we used that the CPTP map $e^{\delta t \xL}$ contracts the trace-norm.
If we denote again $\boldsymbol{\rho}_{n,\delta t}= \xF_{\delta t}^{n}\boldsymbol{\rho}_0$, we got
\begin{align}
    \label{eq_sum_error_dt}
    \|e^{N_{step}\delta t \xL}\boldsymbol{\rho}_0- \xF^{N_{step}}_{\delta t}(\boldsymbol{\rho}_0)\|_1 \leq \sum_{n=0}^{N_{step}-1} \left\|(e^{\delta t \xL}- \xF_{\delta t})\boldsymbol{\rho}_{n,\delta t}\right\|_1.
\end{align}
To compute a bound on the terms inside the sum, we restrict ourselves to the following cases:
\begin{itemize}
    \item Time-independent Lindbladian with a time solver based on $k^{th}$ order Taylor truncation.
    \item Time-dependent Lindbladian with a first order explicit Euler scheme.
\end{itemize}

\subsection{\texorpdfstring{Time-independent Lindbladian with $k^{th}$ order Taylor scheme}{Time-independent Lindbladian with kth order Taylor scheme}}

\label{paragraph:time_invariant_taylor}
For a fixed integer $k \geq 1$, we define the following discrete scheme that is simply the truncation of the Taylor expansion of $e^{\delta t \xL_N}$, which we call the $k^{\text{th}}$ order Taylor scheme:

\begin{align}
    \xF_{\delta t}(\boldsymbol{\rho})= \sum_{j=0}^{k} \frac{\delta t^j}{j!} \xL_N^j(\boldsymbol{\rho}).
\end{align}
Then, we have
\begin{lemma}
    \label{lemma_taylor}
    For any $\boldsymbol{\rho} \in \xK_s^1(\xH_N)$ that belongs to the domain of $\xL^{k+1}$, we have
    \begin{align}
        \|\xF_{\delta t}(\boldsymbol{\rho})-e^{\delta t \xL}(\boldsymbol{\rho})\|_1 \leq \left\|\xF_{\delta t}(\boldsymbol{\rho})- \sum_{j=0}^{k} \frac{\delta t^j}{j!}\xL^j(\boldsymbol{\rho})\right\|_1 + \frac{\delta t^{k+1}}{(k+1)!} \left\|\xL^{k+1}(\boldsymbol{\rho})\right\|_1.
    \end{align}
\end{lemma}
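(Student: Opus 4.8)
The plan is to insert the degree-$k$ Taylor polynomial of the \emph{exact} flow applied to $\boldsymbol{\rho}$, namely $\sum_{j=0}^{k} \frac{\delta t^j}{j!}\xL^j(\boldsymbol{\rho})$, as an intermediate term between $\xF_{\delta t}(\boldsymbol{\rho})$ and $e^{\delta t \xL}(\boldsymbol{\rho})$. The triangle inequality for $\|\cdot\|_1$ then gives directly that $\|\xF_{\delta t}(\boldsymbol{\rho})-e^{\delta t \xL}(\boldsymbol{\rho})\|_1$ is bounded by $\bigl\|\xF_{\delta t}(\boldsymbol{\rho})- \sum_{j=0}^{k} \frac{\delta t^j}{j!}\xL^j(\boldsymbol{\rho})\bigr\|_1$ plus $\bigl\|\sum_{j=0}^{k} \frac{\delta t^j}{j!}\xL^j(\boldsymbol{\rho}) - e^{\delta t \xL}(\boldsymbol{\rho})\bigr\|_1$. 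The first of these is exactly the first term in the statement; note it is the genuinely computable part, since $\xF_{\delta t}$ is built from $\xL_N$ whereas the Taylor polynomial uses $\xL$, and by \cref{sec_ex1} (for polynomial operators) both $\xL_N^j(\boldsymbol{\rho})$ and $\xL^j(\boldsymbol{\rho})$ are explicitly representable on a slightly enlarged truncation. It therefore remains to bound the Taylor remainder of the exact semigroup, i.e. to prove $\bigl\|\sum_{j=0}^{k} \frac{\delta t^j}{j!}\xL^j(\boldsymbol{\rho}) - e^{\delta t \xL}(\boldsymbol{\rho})\bigr\|_1 \le \frac{\delta t^{k+1}}{(k+1)!}\|\xL^{k+1}(\boldsymbol{\rho})\|_1$.

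For this, I would consider the $\xK^1$-valued curve $t\mapsto e^{t\xL}\boldsymbol{\rho}$. Using that $\boldsymbol{\rho}\in B(\xH_N)$ lies in the domain of $\xL^{k+1}$ (the standing assumption of \cref{sec:time_estimator} recalled in \cref{subsec_notation}) and that the semigroup commutes with its generator, one has $\frac{d^{j}}{dt^{j}}e^{t\xL}\boldsymbol{\rho} = e^{t\xL}\xL^{j}\boldsymbol{\rho}$ for $0\le j\le k+1$, so Taylor's theorem with integral remainder (equivalently, $k+1$ iterations of Duhamel's formula) yields
\[
e^{\delta t\xL}\boldsymbol{\rho} - \sum_{j=0}^k \frac{\delta t^j}{j!}\xL^j\boldsymbol{\rho} = \int_0^{\delta t}\frac{(\delta t-s)^k}{k!}\,e^{s\xL}\,\xL^{k+1}\boldsymbol{\rho}\,ds .
\]
Then I would take $\|\cdot\|_1$ inside the integral; since $\xL$ preserves self-adjointness, $\xL^{k+1}\boldsymbol{\rho}\in\xK^1_s$, so \cref{prop:CPTP} applies to the CPTP map $e^{s\xL}$ and gives $\|e^{s\xL}\xL^{k+1}\boldsymbol{\rho}\|_1\le\|\xL^{k+1}\boldsymbol{\rho}\|_1$. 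Pulling this constant out of the integral and evaluating $\int_0^{\delta t}\frac{(\delta t-s)^k}{k!}\,ds=\frac{\delta t^{k+1}}{(k+1)!}$ closes the estimate.

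The main obstacle is not the algebra but the functional-analytic bookkeeping in the unbounded setting: one must know that $\xL^{k+1}\boldsymbol{\rho}$ is trace class (so the right-hand side is finite and the integrand lives in $\xK^1$), that $s\mapsto e^{s\xL}\xL^{k+1}\boldsymbol{\rho}$ is $\xK^1$-continuous so the Bochner integral is well defined, and that one may differentiate under the flow $k+1$ times — all of which follow from the domain hypotheses on $\xH_N$. A way to keep everything at the level of absolutely convergent $\xK^1$-valued integrals and avoid invoking Taylor's theorem in a Banach space directly is to iterate $e^{t\xL}\boldsymbol{\rho}=\boldsymbol{\rho}+\int_0^t e^{s\xL}\xL\boldsymbol{\rho}\,ds$ exactly $k+1$ times and then collapse the resulting iterated integral over the simplex $\{0\le s_{k+1}\le\cdots\le s_1\le\delta t\}$ to $\int_0^{\delta t}\frac{(\delta t-s)^k}{k!}\,e^{s\xL}\xL^{k+1}\boldsymbol{\rho}\,ds$ by Fubini; this is the same computation, and the trace-norm bound proceeds identically.
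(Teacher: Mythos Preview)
Your proposal is correct and follows essentially the same route as the paper: triangle inequality to split off the computable term, then the integral Taylor remainder $\int_0^{\delta t}\frac{(\delta t-s)^k}{k!}\,e^{s\xL}\xL^{k+1}\boldsymbol{\rho}\,ds$ bounded via the trace-norm contraction of the CPTP semigroup $e^{s\xL}$. The additional care you take with domain hypotheses and the Bochner integral is more explicit than the paper, but the argument is the same.
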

Before proving this Lemma, note that together with \cref{eq_sum_error_dt}, it implies that
\begin{align}
    \|e^{T\xL}\boldsymbol{\rho}_0- \xF^{N_{step}}_{\delta t}(\boldsymbol{\rho}_0)\|_1 \leq \sum_{n=0}^{N_{step}-1} \left( \left\|\sum_{j=0}^{k} \frac{\delta t^j \xL^j(\boldsymbol{\rho}_{n,\delta t})}{j!}-\xF_{\delta t}(\boldsymbol{\rho}_{n,\delta t})\right\|_1 + \frac{\delta t^{k+1}}{(k+1)!} \left\|\xL^{k+1}(\boldsymbol{\rho}_{n,\delta t})\right\|_1 \right) .
\end{align}

\begin{remark}
    In the case where $\xH$ is finite-dimensional and $\xH_N = \xH$, we have $\| \sum_{j=0}^{k} \frac{\delta t^j}{j!} \xL^j(\boldsymbol{\rho}) - \xF_{\delta t}(\boldsymbol{\rho}) \|_1 = 0$. Consequently, the estimate contains only the terms $\frac{\delta t^{k+1}}{(k+1)!} \|\xL^{k+1}(\boldsymbol{\rho}_{n,\delta t})\|_1$.
\end{remark}
\begin{proof}[Proof of \cref{lemma_taylor}]
    With the triangle inequality, we get
    \begin{align}\|\xF_{\delta t}(\boldsymbol{\rho})-e^{\delta t \xL}(\boldsymbol{\rho})\|_1 \leq \left\|\xF_{\delta t}(\boldsymbol{\rho})-\sum_{j=0}^{k} \frac{\delta t^j}{j!}\xL^j(\boldsymbol{\rho})\right\|_1 +  \left\|\sum_{j=0}^{k} \frac{\delta t^j}{j!}\xL^j(\boldsymbol{\rho}) - e^{\delta t \xL}(\boldsymbol{\rho})\right\|_1.
    \end{align}
    To deal with the second term, we use the integral form of the remainder of the Taylor expansion
    \begin{align}
        \notag \left\|e^{\delta t \xL}(\boldsymbol{\rho})- \sum_{j=0}^{k} \frac{\delta t^j}{j!}\xL^j(\boldsymbol{\rho})\right\|_1& = \left\|\frac{1}{k!}\int_0^{\delta t} \xL^{k+1}(e^{s\xL}\boldsymbol{\rho})(\delta t-s)^k ds\right\|_1\\
        \notag &= \left\|\frac{1}{k!}\int_0^{\delta t} e^{s\xL}\xL^{k+1}(\boldsymbol{\rho})(\delta t-s)^k ds\right\|_1\\
        \notag &\leq \frac{1}{k!}\int_0^{\delta t}\| \xL^{k+1}(\boldsymbol{\rho})(\delta t-s)^k ds\|_1\\
        &=\frac{\delta t^{k+1}\|\xL^{k+1}(\boldsymbol{\rho})\|_1}{(k+1)!}.
    \end{align}
\end{proof}

\subsection{Time-dependent Lindbladian with a first order explicit Euler scheme.}
\label{subsubsec_time_dep}
Let us now consider a time-dependent Lindbladian that we denote $\xL(t,\cdot)$. For $t_1 \leq t_2$, the flow of the Lindbladian is the bounded linear map $\Phi(t_1,t_2,\cdot): \xK^1\to \xK^1$ characterized by the following set of equations for all $\boldsymbol{\rho}_0$ smooth enough\footnote{See e.g. \cite[Section 3.2]{gondolfEnergyPreservingEvolutions2023}}:
\begin{align}
    \label{eq_lind_time_dependant}
    \partial_{t_2}\Phi(t_1,t_2,\boldsymbol{\rho}_0)=\xL(t_2,\Phi(t_1,t_2,\boldsymbol{\rho}_0)), \quad \Phi(t_1,t_1,\boldsymbol{\rho}_0)=\boldsymbol{\rho}_0, \quad \forall t_2 \geq t_1
\end{align}

The first order explicit Euler scheme is
\begin{align}
    \xF_{\delta t}(t,\boldsymbol{\rho})=\boldsymbol{\rho} + \delta t \xL_N(t,\boldsymbol{\rho}).
\end{align}
The applications $\boldsymbol{\rho} \mapsto \Phi(t_1,t_2,\boldsymbol{\rho})$ are CPTP map, so \cref{eq_sum_error_dt} is modified as follows:
\begin{align}
    \|\boldsymbol{\Phi}(0,T,\boldsymbol{\rho}_0)- \boldsymbol{\rho}_{N_{step},\delta t}\|_1 \leq \sum_{n=0}^{N_{step}-1} \|\boldsymbol{\Phi}(t_n,t_{n+1},\boldsymbol{\rho}_{n,\delta t})- \xF_{\delta t}(t_n,\boldsymbol{\rho}_{n,\delta t})\|_1,
\end{align}
with $t_n=n\delta t$ and
\begin{align}
    \boldsymbol{\rho}_{n+1,\delta t}= \xF_{\delta t}(n\delta t,\boldsymbol{\rho}_{n,\delta t}),\quad \boldsymbol{\rho}_{0,\delta t}=\boldsymbol{\rho}_0.
\end{align}

\begin{lemma}
    \label{lem_time_dep_estimate}
    The norm of the error on each time step is bounded by
    \begin{align}
        \label{eq_lem_time_dep_estimate}
        \notag
        \|\boldsymbol{\Phi}(t_n,t_{n+1},\boldsymbol{\rho}_{n,\delta t})- \xF_{\delta t}(t_n,\boldsymbol{\rho}_{n,\delta t})\|_1 \leq \delta t \sup_{t_n \leq s \leq t_{n+1}} \| (\xL(s,\cdot)-\xL(t_n,\cdot))\boldsymbol{\rho}_{n,\delta t}\|_1 \\
        + \frac{\delta t ^2}{2} \sup_{t_n \leq s \leq t_{n+1}} \|\xL(s,(\xL(t_n,\boldsymbol{\rho}_{n,\delta t})))\|_1+ \delta t \|(\xL(t_n,\cdot)-\xL_N(t_n,\cdot))\boldsymbol{\rho}_{n,\delta t}\|_1.
    \end{align}
\end{lemma}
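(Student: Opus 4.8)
The plan is to peel off the space-truncation contribution and then control the remaining exact-generator Euler error by interpolating between the exact solution and the Euler iterate along a curve that the Lindblad propagator — which is \emph{linear} and, by \cref{prop:CPTP}, trace-norm contracting on $\xK^1_s$ — maps in a controllable way. Write $\boldsymbol{\sigma}=\boldsymbol{\rho}_{n,\delta t}$, $t_{n+1}-t_n=\delta t$ and recall $\xF_{\delta t}(t_n,\boldsymbol{\sigma})=\boldsymbol{\sigma}+\delta t\,\xL_N(t_n,\boldsymbol{\sigma})$. Inserting $\pm\,\delta t\,\xL(t_n,\boldsymbol{\sigma})$ and using the triangle inequality,
\begin{align}
    \|\boldsymbol{\Phi}(t_n,t_{n+1},\boldsymbol{\sigma})-\xF_{\delta t}(t_n,\boldsymbol{\sigma})\|_1
    &\le \|\boldsymbol{\Phi}(t_n,t_{n+1},\boldsymbol{\sigma})-\boldsymbol{\sigma}-\delta t\,\xL(t_n,\boldsymbol{\sigma})\|_1 \notag\\
    &\quad + \delta t\,\|(\xL(t_n,\cdot)-\xL_N(t_n,\cdot))\boldsymbol{\sigma}\|_1,
\end{align}
which already yields the last term of \cref{eq_lem_time_dep_estimate}. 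It remains to bound the local error $\|\boldsymbol{\Phi}(t_n,t_{n+1},\boldsymbol{\sigma})-\boldsymbol{\sigma}-\delta t\,\xL(t_n,\boldsymbol{\sigma})\|_1$ of the untruncated explicit Euler step.

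For this I would introduce, for $s\in[t_n,t_{n+1}]$, the curve $h(s)=\boldsymbol{\Phi}\!\left(s,t_{n+1},\,\boldsymbol{\sigma}+(s-t_n)\xL(t_n,\boldsymbol{\sigma})\right)$, which satisfies $h(t_n)=\boldsymbol{\Phi}(t_n,t_{n+1},\boldsymbol{\sigma})$ (the exact solution) and $h(t_{n+1})=\boldsymbol{\sigma}+\delta t\,\xL(t_n,\boldsymbol{\sigma})$ (the Euler iterate for the exact generator), so that the quantity to estimate equals $\left\|\int_{t_n}^{t_{n+1}}h'(s)\,ds\right\|_1$. Using that $\boldsymbol{\Phi}(s,t_{n+1},\cdot)$ is linear and obeys the backward equation $\partial_s\boldsymbol{\Phi}(s,t_{n+1},\cdot)=-\boldsymbol{\Phi}(s,t_{n+1},\xL(s,\cdot))$, the chain rule gives
\begin{align}
    h'(s)=\boldsymbol{\Phi}\!\left(s,t_{n+1},\ \xL(t_n,\boldsymbol{\sigma})-\xL\!\left(s,\boldsymbol{\sigma}+(s-t_n)\xL(t_n,\boldsymbol{\sigma})\right)\right),
\end{align}
and, expanding by linearity of $\xL(s,\cdot)$,
\begin{align}
    \xL\!\left(s,\boldsymbol{\sigma}+(s-t_n)\xL(t_n,\boldsymbol{\sigma})\right)-\xL(t_n,\boldsymbol{\sigma})
    =(\xL(s,\cdot)-\xL(t_n,\cdot))\boldsymbol{\sigma}+(s-t_n)\,\xL\!\left(s,\xL(t_n,\boldsymbol{\sigma})\right).
\end{align}

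To conclude, I would take the trace norm, apply the triangle inequality to these two pieces, and bound $\|\boldsymbol{\Phi}(s,t_{n+1},\mathbf{X})\|_1\le\|\mathbf{X}\|_1$ via \cref{prop:CPTP} — applicable because $\xL$ preserves self-adjointness and, since $\boldsymbol{\sigma}\in B(\xH_N)$ lies in the domain of $\xL$ and of $\xL\circ\xL$ by the standing assumption of this section, both $(\xL(s,\cdot)-\xL(t_n,\cdot))\boldsymbol{\sigma}$ and $\xL(s,\xL(t_n,\boldsymbol{\sigma}))$ are self-adjoint trace-class operators. This yields $\|h'(s)\|_1\le\|(\xL(s,\cdot)-\xL(t_n,\cdot))\boldsymbol{\sigma}\|_1+(s-t_n)\|\xL(s,\xL(t_n,\boldsymbol{\sigma}))\|_1$; integrating over $[t_n,t_{n+1}]$, bounding each factor by its supremum over $s$ and using $\int_{t_n}^{t_{n+1}}(s-t_n)\,ds=\delta t^2/2$, one obtains precisely the first two terms of \cref{eq_lem_time_dep_estimate}.

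The delicate point is the rigorous justification of the differentiation of $h$ and of the backward equation for the non-autonomous, possibly unbounded generator: this is handled by staying inside the domain of $\xL\circ\xL$ — where, after at most two applications of $\xL$, everything reduces to self-adjoint trace-class operators fed into the bounded, contracting propagator — together with the well-posedness of the Lindblad evolution recalled in the introduction. I would also note why the naive alternative does not close: Taylor-expanding $\boldsymbol{\Phi}(t_n,t_{n+1},\boldsymbol{\sigma})-\boldsymbol{\sigma}=\int_{t_n}^{t_{n+1}}\xL(s,\boldsymbol{\rho}(s))\,ds$ (with $\boldsymbol{\rho}(\cdot)$ the exact solution) produces a second-order remainder involving $\xL(s,\xL(u,\boldsymbol{\rho}(u)))$ along the unavailable exact sub-trajectory rather than $\xL(s,\xL(t_n,\boldsymbol{\sigma}))$, and it is exactly the interpolation through $h$ that replaces $\boldsymbol{\rho}(u)$ by $\boldsymbol{\sigma}$.
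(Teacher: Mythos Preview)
Your proof is correct and is essentially the paper's argument in slightly different clothing. The paper defines the error $\mathbf{r}(t)=\boldsymbol{\rho}_t-\boldsymbol{\sigma}-t\,\xL(t_n,\boldsymbol{\sigma})$, writes $\dot{\mathbf{r}}(t)=\xL(t,\mathbf{r}(t))+\bigl[\xL(t,\boldsymbol{\sigma}+t\,\xL(t_n,\boldsymbol{\sigma}))-\xL(t_n,\boldsymbol{\sigma})\bigr]$ and applies Duhamel's principle, arriving at $\mathbf{r}(\delta t)=\int_{t_n}^{t_{n+1}}\boldsymbol{\Phi}(s,t_{n+1},g(s))\,ds$ with the very same integrand $g(s)=\xL(s,\boldsymbol{\sigma}+(s-t_n)\xL(t_n,\boldsymbol{\sigma}))-\xL(t_n,\boldsymbol{\sigma})$ you obtain; your interpolation curve $h(s)=\boldsymbol{\Phi}(s,t_{n+1},\boldsymbol{\sigma}+(s-t_n)\xL(t_n,\boldsymbol{\sigma}))$ is precisely the variation-of-constants curve whose derivative yields that Duhamel integral, so the two derivations coincide line by line after the identification $h(t_n)-h(t_{n+1})=\mathbf{r}(\delta t)$.
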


\begin{proof}
    To lighten the notation, we focus on the first time-step and denote $\boldsymbol{\rho}_t=\Phi(t,0,\boldsymbol{\rho}_0)$. We introduce the error between the exact solution and the first order euler scheme using $\xL$ and not $\xL_N$:
    \begin{align}
        \mathbf{r}(t)=\boldsymbol{\rho}_{t}-\boldsymbol{\rho}_{0} -t \xL(0,\boldsymbol{\rho}_0).
    \end{align}
    Taking the time derivative, we have,
    \begin{align}
        \notag \dot{\mathbf{r}}(t)&=\xL(t,\boldsymbol{\rho}(t))-\xL(0,\boldsymbol{\rho}_0)\\
        &=\xL(t,\mathbf{r}(t) ) + \xL(t, \boldsymbol{\rho}_0+ t \xL(0,\boldsymbol{\rho}_0))-\xL(0,\boldsymbol{\rho}_0).
    \end{align}
    Using Duhamel principle, we get
    \begin{align}
        \mathbf{r}(t)=\boldsymbol{\Phi}(0,t,\mathbf{r}(0))+\int_0^t\boldsymbol{\Phi}(s,t,\xL(s, \boldsymbol{\rho}_0+ s \xL(0,\boldsymbol{\rho}_0))-\xL(0,\boldsymbol{\rho}_0)) ds.
    \end{align}
    As $\Phi(s,t,\cdot)$ contracts the trace norm and $\mathbf{r}(0)=0$, we get
    \begin{align}
        \notag \|\mathbf{r}(t)\|_1 &\leq \int_0^t \|\xL(s, \boldsymbol{\rho}_0 + s \xL(0,\boldsymbol{\rho}_0))-\xL(0,\boldsymbol{\rho}_0)\|_1\\
        &\leq \int_0^t \|(\xL(s,\cdot)-\xL(0,\cdot))(\boldsymbol{\rho}_0)\|_1 + \|s \xL(s,\xL(0,\boldsymbol{\rho}_0))\|_1 ds.
    \end{align}
    Hence,
    \begin{align}
        \|e(\delta t)\|_1 &\leq\delta t \sup_{0 \leq s \leq \delta t} \| (\xL(s,\cdot)-\xL(0,\cdot))\boldsymbol{\rho}_0\|_1 + \frac{\delta t ^2}{2} \sup_{0 \leq s \leq \delta t} \|\xL(s,(\xL(0,\boldsymbol{\rho}_0)))\|_1.
    \end{align}
    As 
    \begin{align}
        \notag \boldsymbol{\Phi}(0,\delta t,\boldsymbol{\rho}_0)- \xF_{\delta t}(0,\boldsymbol{\rho}_0)&=\mathbf{r}(\delta t)+\boldsymbol{\rho}_0 + \delta t \xL(0,\boldsymbol{\rho}_0)- \boldsymbol{\rho}_0 - \delta t \xL_N(0,\boldsymbol{\rho}_0)\\
        &=\mathbf{r}(\delta t) + \delta t(\xL(0,\boldsymbol{\rho}_0)-\xL_N(0,\boldsymbol{\rho}_0)),
    \end{align}
    we use a triangle inequality to finish the proof of \cref{lem_time_dep_estimate}.
\end{proof}
Note that in the time-independent case, we recover 
\begin{align}
\|\mathbf{r}(\delta t)\|_1 \leq \frac{\delta t^2}{2} \|\xL(0,\xL(0,\boldsymbol{\rho}_0))\|_1+\delta t \|(\xL(0,\cdot)-\xL_N(0,\cdot))\boldsymbol{\rho}_0\|_1.
\end{align}
corresponding to \cref{lemma_taylor} for $k=1$.

It is important to notice that this estimate requires prior knowledge on the regularity of $s\mapsto \xL(s,\cdot)$. A typical case is when the Lindbladian (or part of it) is of the form $u(t)\xL_0$ where $\xL_0$ is time-independent. For example, if $u$ is a $\mathcal{C}^1$ function, then
\begin{align}
    \sup_{0 \leq s \leq \delta t} \| (\xL(s,\cdot)-\xL(0,\cdot))\boldsymbol{\rho}_0\|_1 &\leq \delta t \|u'\|_{L^\infty([0,\delta t])} \|\xL_0(\boldsymbol{\rho}_0)\|_1,\\
    \sup_{0 \leq s \leq \delta t} \|\xL(s,(\xL(0,\boldsymbol{\rho}_0)))\|_1 & \leq |u(0)| \|u\|_{L^\infty([0,\delta t])} \|\xL_0^2(\boldsymbol{\rho}_0)\|_1.
\end{align}
\subsection{Applications}
\subsubsection{\texorpdfstring{Frequency driven damped oscillator revisited - $\Hamil = u(t) \create \destroy$ and $\cD_{\destroy}$}{Frequency driven damped oscillator revisited}}
\label{ex:aadag_st}
Let us now generalize the truncation estimate obtained in \cref{ex:aadag}. We assume that we use a first order explicit Euler scheme for the time integration, and that $u(t)$ is a $\mathcal{C}^1$ function.
Then, following \cref{subsubsec_time_dep}, we can bound the error between the continuous solution $(\boldsymbol{\rho}_t)_{t\geq 0}$ and the discrete time approximation with step-size $\delta t$, $(\boldsymbol{\rho}_{n,\delta t})_{n\in \xN}$, by
\begin{align}
    \notag
    \|\boldsymbol{\rho}_{n\delta t}-\boldsymbol{\rho}_{n,\delta t}\|_1 &\leq \delta t ^2 \|u'\|_\infty \sum_{k=0}^n \|[\create_N \destroy_N,\boldsymbol{\rho}_{n,\delta t}]\|_1 \\
    &+\frac{\delta t^2}{2} \sum_{k=0}^n \|\cD[\destroy_N](\xL_N(k \delta t,\boldsymbol{\rho}_{k,\delta t}))\|_1 + \|u\|_\infty \|[\create_N \destroy_N,(\xL_N(k \delta t,\boldsymbol{\rho}_{k,\delta t}))]\|_1.
\end{align}

\subsubsection{\texorpdfstring{Driven oscillator revisited - $\Hamil = u(t)(\destroy + \create)$}{Driven oscillator revisited}}
\label{ex:aplusadag_st}
In \cref{ex:aplusadag}, we had the space estimate
\begin{align}
    \|\boldsymbol{\rho}_t-\rN(t)\|_1 \leq \int_0^t 2|u(s)|\sqrt{N+1} \sqrt{\bra{N}{\rN(s)}^2 \ket{N}}ds.
\end{align}
To obtain a space-time estimate for a first order explicit Euler scheme, we get from \cref{lem_time_dep_estimate}
\begin{align}
    \notag \|\boldsymbol{\rho}_{n\delta t}-\boldsymbol{\rho}_{n,\delta t}\|_1 &\leq \delta t ^2 \|u'\|_\infty \sum_{k=0}^n \|[\destroy_{N+1}+\create_{N+1},\boldsymbol{\rho}_{k,\delta t}]\|_1\\
    \notag &+ \delta t ^2 \|u\|_\infty \sum_{k=0}^n |u(k\delta t)|\|[\destroy_{N+2}+\create_{N+2},[\destroy_{N+1}+\create_{N+1}, \boldsymbol{\rho}_{k,\delta t}]]\|_1\\
    &+2\delta t \sum_{k=0}^n |u(k\delta t)|\sqrt{N+1} \sqrt{\bra{N}{\boldsymbol{\rho}_{k,\delta t}}^2 \ket{N}}.
\end{align}
Note that it is needed to perform computation in the space $\xK^1_s(\xH_{N+2})$ for the second sum.

\subsubsection{General bosonic modes}
In this short subsection, we explain how to extend the result of \cref{sec_ex1}, and briefly show that for a bosonic mode with polynomials in creation and annihilation, we can also compute the space-time estimate. We only focus on the one mode case, the generalization to several modes is similar to \cref{subsec:ap1general_case}.
As a consequence of \cref{co:Hamiltonian,co:dissipator}, we get:
\begin{corollary}
    \label{co:L-LN}
    If the Hamiltonian $\Hamil$ and the dissipators $\LL_i$, are polynomials in $\destroy$ and $\create$, of degree $d_{\Hamil}$ and $d_i$ resp., then defining $d=\max(d_H, 2\max_i(d_i))$ we have:
    \begin{align}
        (\mathcal{L}-\mathcal{L}_N)(\boldsymbol{\rho}_N)&=(\mathcal{L}_{N + d}-\mathcal{L}_N)(\boldsymbol{\rho}_N)\in \xK_s^1(\xH_{N+d}).
    \end{align} 
 This naturally extends to 
    \begin{align}
        (\mathcal{L}^k-\mathcal{L}_N^k)(\boldsymbol{\rho}_N)&=(\mathcal{L}_{N + kd}^k-\mathcal{L}_N^k)(\boldsymbol{\rho}_N) \in \xK_s^1(\xH_{N+kd}).
    \end{align} 
\end{corollary}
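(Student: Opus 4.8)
The plan is to first establish the $k=1$ identity, which is essentially a repackaging of \cref{co:Hamiltonian,co:dissipator}, and then to bootstrap to general $k$ by induction using the fact that applying $\xL$ or $\xL_N$ to an operator supported on a Fock-truncated space produces an operator supported on a (controlled) larger Fock-truncated space. The key structural input is \cref{proposition:polyink}: a polynomial of degree $d$ in $\destroy,\create$ maps $\xH_N$ into $\xH_{N+d}$, from which \cref{co:Hamiltonian} gives $-i[\Hamil,\boldsymbol{\sigma}_N]=-i[\Hamil_{N+d_\Hamil},\boldsymbol{\sigma}_N]$ and \cref{co:dissipator} gives $\cD_{\LL_i}(\boldsymbol{\sigma}_N)=\cD_{(\LL_i)_{N+2d_{\LL_i}}}(\boldsymbol{\sigma}_N)$. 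With $d=\max(d_\Hamil,2\max_i d_{\LL_i})$, summing these identities and noting that all the truncated operators involved are obtained by restricting to a space no larger than $\xH_{N+d}$, we get $\xL(\boldsymbol{\rho}_N)=\xL_{N+d}(\boldsymbol{\rho}_N)$; subtracting $\xL_N(\boldsymbol{\rho}_N)$ from both sides yields $(\xL-\xL_N)(\boldsymbol{\rho}_N)=(\xL_{N+d}-\xL_N)(\boldsymbol{\rho}_N)$. Membership in $\xK^1_s(\xH_{N+d})$ follows because both $\xL_{N+d}(\boldsymbol{\rho}_N)$ and $\xL_N(\boldsymbol{\rho}_N)$ are supported on $\xH_{N+d}$, and self-adjointness is preserved since $\boldsymbol{\rho}_N$ is self-adjoint and $\xL,\xL_N$ are Lindbladians.

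For the iterated statement, I would argue by induction on $k$. The base case $k=1$ is the above. For the inductive step, suppose $(\xL^k-\xL_N^k)(\boldsymbol{\rho}_N)=(\xL_{N+kd}^k-\xL_N^k)(\boldsymbol{\rho}_N)\in\xK^1_s(\xH_{N+kd})$ and, more to the point, that both $\xL^k(\boldsymbol{\rho}_N)$ and $\xL_N^k(\boldsymbol{\rho}_N)$ lie in $\xK^1_s(\xH_{N+kd})$ (the latter trivially since $\xL_N$ preserves $\xH_N\subseteq\xH_{N+kd}$; the former because each application of $\xL$ enlarges the support by at most $d$ Fock levels). Write $\xL^{k+1}(\boldsymbol{\rho}_N)=\xL\bigl(\xL^k(\boldsymbol{\rho}_N)\bigr)$. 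Since $\xL^k(\boldsymbol{\rho}_N)\in\xK^1_s(\xH_{N+kd})$, the $k=1$ result applied with truncation index $N+kd$ gives $\xL\bigl(\xL^k(\boldsymbol{\rho}_N)\bigr)=\xL_{N+(k+1)d}\bigl(\xL^k(\boldsymbol{\rho}_N)\bigr)\in\xK^1_s(\xH_{N+(k+1)d})$. A parallel (and simpler) computation handles $\xL_N^{k+1}(\boldsymbol{\rho}_N)$, and subtracting gives the claimed identity at level $k+1$, with membership in $\xK^1_s(\xH_{N+(k+1)d})$.

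The main point requiring a little care — and the closest thing to an obstacle — is making sure the inductive hypothesis is strong enough: one must carry along not just the \emph{difference} identity but the separate support statements $\xL^j(\boldsymbol{\rho}_N),\xL_N^j(\boldsymbol{\rho}_N)\in\xK^1_s(\xH_{N+jd})$, since it is $\xL^k(\boldsymbol{\rho}_N)$ (not the difference) that must be fed back into the $k=1$ lemma. There is also a mild subtlety in that \cref{co:dissipator} used $2d$ rather than $d$ for the dissipator truncation because of the $\LL^\dagger\LL$ term, so one should note that the choice $d=\max(d_\Hamil,2\max_i d_{\LL_i})$ already absorbs this and the argument goes through unchanged; similarly one should observe $\xH_N\subseteq\xH_{N+jd}$ so that the smaller truncations appearing in $\xL_N^j$ cause no trouble. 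Everything else is routine bookkeeping of Fock-level indices.
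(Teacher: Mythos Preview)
Your proposal is correct and matches the paper's approach: the paper states this corollary as an immediate consequence of \cref{co:Hamiltonian,co:dissipator} without spelling out the induction, and you have supplied exactly the natural argument the paper implicitly relies on. The only minor point worth making explicit in the inductive step is that $\xL^k(\boldsymbol{\rho}_N)=\xL_{N+(k+1)d}^k(\boldsymbol{\rho}_N)$ (not just $\xL_{N+kd}^k$), which follows from the same support-growth reasoning and lets you identify $\xL_{N+(k+1)d}(\xL^k(\boldsymbol{\rho}_N))$ with $\xL_{N+(k+1)d}^{k+1}(\boldsymbol{\rho}_N)$.
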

Hence, we can always numerically compute the estimate of \cref{lemma_taylor} for an order $k$ scheme by embedding $\boldsymbol{\rho}_{n,\delta t}$ in $\xK_s^1(\xH_{N+(k+1)d})$, and computing the trace norm of $(\mathcal{L}^j_{N + (k+1)d}-\mathcal{L}^j_N)(\boldsymbol{\rho}_{n,\delta t})$ for $1\leq j \leq k$ and of $\mathcal{L}^{k+1}_{N + (k+1)d}\boldsymbol{\rho}_{n,\delta t}$ in this finite dimensional space.

\section{Conclusion}
\label{sec_ccl}
In this article, we provided \textit{a posteriori} computable error bounds for the space-truncation error and/or time discretization. We have demonstrated numerically the efficiency of our approach for a large class of problems involving bosonic modes.

Several promising avenues for future research are worth pursuing:

\begin{enumerate}
\item In \cref{sec_ex2}, we have demonstrated how space estimates can be applied to various operators related to unitary operators. Extending these methods to other types of systems with different dissipators would be a valuable next step.
\item We have not addressed the proof of convergence of the approximations toward the solution $(\boldsymbol{\rho}_t)_{t\geq 0}$ in this work. This problem has been initiated for time discretization with unbounded operators in \cite{robin2025unconditionallystabletimediscretization} and in \cite{robin2025convergenceanalysisgalerkinapproximations} for space discretization, but space-time discretization remains an open question.
\item A powerful property that would be beneficial to prove with \textit{a posteriori} error bounds is an efficiency result. Specifically, while we know that the error is smaller than the estimate, it would be advantageous to prove the existence of a constant such that the estimate is controlled by this constant times the error. Although such a property may not hold universally, identifying verifiable criteria that ensure its validity would be a significant advancement.
\item We believe that maintaining and extending the library \textsc{dynamiqs\_adaptive} (see \cref{sub_sec:dynamiqs_adaptive}) should be a valuable asset for the community. Several improvements, both on the numerical and algorithmic side could be implemented. In particular, general support of the space-time estimates would be very interesting.
\item In this article, we have restricted ourselves to linear approximations for space discretization, specifically using a Hilbert space $\xH_N$. Many recent works, however, consider non-linear approximations, such as low-rank approximations, tensor networks, and many others. These settings are fundamentally different as they do not yield a Lindblad equation on the approximation manifold. Nevertheless, investigating \textit{a posteriori} error estimates in these contexts would be extremely interesting and could provide valuable insights.
\item Another worth considering extension is the stochastic unraveling of the Lindblad master equation. In this case, providing either strong or weak \textit{a posteriori} estimates on the truncation and/or time discretization errors has not been explored yet.
\end{enumerate}

\paragraph{Acknowledgements}

The authors would like to express their gratitude to Alexandre Ern and Claude Le Bris for their invaluable and fruitful discussions. Additionally, we extend our thanks to Ronan Gautier, Pierre Guilmin, Mazyar Mirrahimi, Alexandru Petrescu, Alain Sarlette, Lev-Arcady Sellem, and Antoine Tilloy for their insightful feedback.

This project has received
funding from the European Research Council (ERC) under the European Union’s Horizon
2020 research and innovation program (grant agreement No. 884762) and Plan France 2030 through the project ANR-22-PETQ-0006.
\appendix
\crefalias{section}{appendix}
\section{Some pathological examples}
\label{sec:pathological}
A classical approach to get a rough idea of the truncation error is to compare numerical solutions while increasing the truncation dimension $N$, i.e. comparing $\boldsymbol{\rho}_N$ and $\boldsymbol{\rho}_{N+k}$. While we do not claim that in most of the reasonable cases this naive approach is misleading, we provide here some pathological examples where it fails.

First, due to symmetries, it is usually a bad idea to compare two simulations with truncation dimensions $N$ and $N+k$ with $k$ too small. For example, if the initial condition is even in the Fock basis, and if the Lindbladian preserves this parity, then all simulations with $2N$ and $2N+1$ will give the exact same result; meaning that for all $N$, $\boldsymbol{\rho}_{(2N)}=\boldsymbol{\rho}_{(2N+1)}$ independently of the convergence of $\rN$ towards the exact solution. More generally, as soon as $\xL_N$ and $\xL_{N+k}$ coincide on $\xK(\xH_N)$, the two simulations will give the same result. Using for example only monomials of the form $\boldsymbol{a}^{k+1}, \boldsymbol{a}^{\dagger, k+1}$, in the Lindbladian will lead to this issue.

A more striking example appears in~\cite{ashhab2025finitedimensionalapproximationsgeneralizedsqueezing}. There the authors study the Hamiltonian
\begin{equation*}
    \Hamil = i \left({\boldsymbol{a}^\dagger}^3 - \boldsymbol{a}^3\right),
\end{equation*}
which is not essentially self-adjoint on the finite span of Fock states. Because the dynamics commutes with the rotation by $e^{2\pi i/3}$, the Hilbert space splits into three invariant subspaces. Starting from the vacuum, it is therefore natural to truncate on

\begin{equation}\label{eq:N_Burgath}
    \xH_{3N}=\operatorname{Span}\{\ket{0},\ket{3},\ldots,\ket{3N}\}, \quad N\in\mathbb{N}.
\end{equation} 

Ashhab et al. show that the unitary evolutions generated by the truncated Hamiltonians on $\xH_{3 (2N)}$ and on $\xH_{3 (2N+1)}$ each converge as $N\to\infty$, but to two different limits -- namely, the unitary evolutions associated with two distinct self-adjoint extensions of $H$. Hence, comparing $\psi_{3N_1}$ and $\psi_{3N_2}$ misleadingly suggests convergence of the simulations if the integers $N_1$ and $N_2$ have the same parity. In contrast, our estimator detects a nontrivial interaction with the tail of the Hilbert space and therefore signals that the naive comparison is unreliable, see \cref{fig:fullwidth_example}.

More precisely, our estimator trivially applies to the Schr\"odinger equation, and for any choice of self-adjoint extension of $H$, we have
\begin{align}
    \label{eq:pathological_estimator}
    \|\ket{\psi(t)}-\ket{\psi(t)}_N\| &\leq \|\ket{\psi(0)}-\ket{\psi(0)}_N\| + \int_0^t \|\left(\Hamil-\Hamil_N \right) \ket{\psi(s)}_N\| ds.
\end{align}

Note that while $\psi(t)$ depends on the choice of self-adjoint extension, the estimator does not, since it only involves applying $\Hamil_N$ and $\Hamil$ to $\ket{\psi(s)}_N$, which belongs to the finite span of Fock states. This means that it does not depend on the choice of the self-adjoint extension of $\Hamil$.

As a consequence, we can define the estimator $\xi_S$ as
\begin{align*}
    \frac{d\xi_S(t)}{dt} = \|\left(\Hamil-\Hamil_N\right) \ket{\psi(t)}_N\|, \quad \xi_S(0)  = \|\ket{\psi(0)}-\ket{\psi(0)}_N\|,
\end{align*}
    and we get $\|\ket{\psi(t)}-\ket{\psi(t)}_N\| \leq \xi_S(t)$ for all $t\geq 0$, and for any choice of self-adjoint extension of $\Hamil$.

\begin{figure}[ht]
    \centering
    \includegraphics[width=0.9\textwidth]{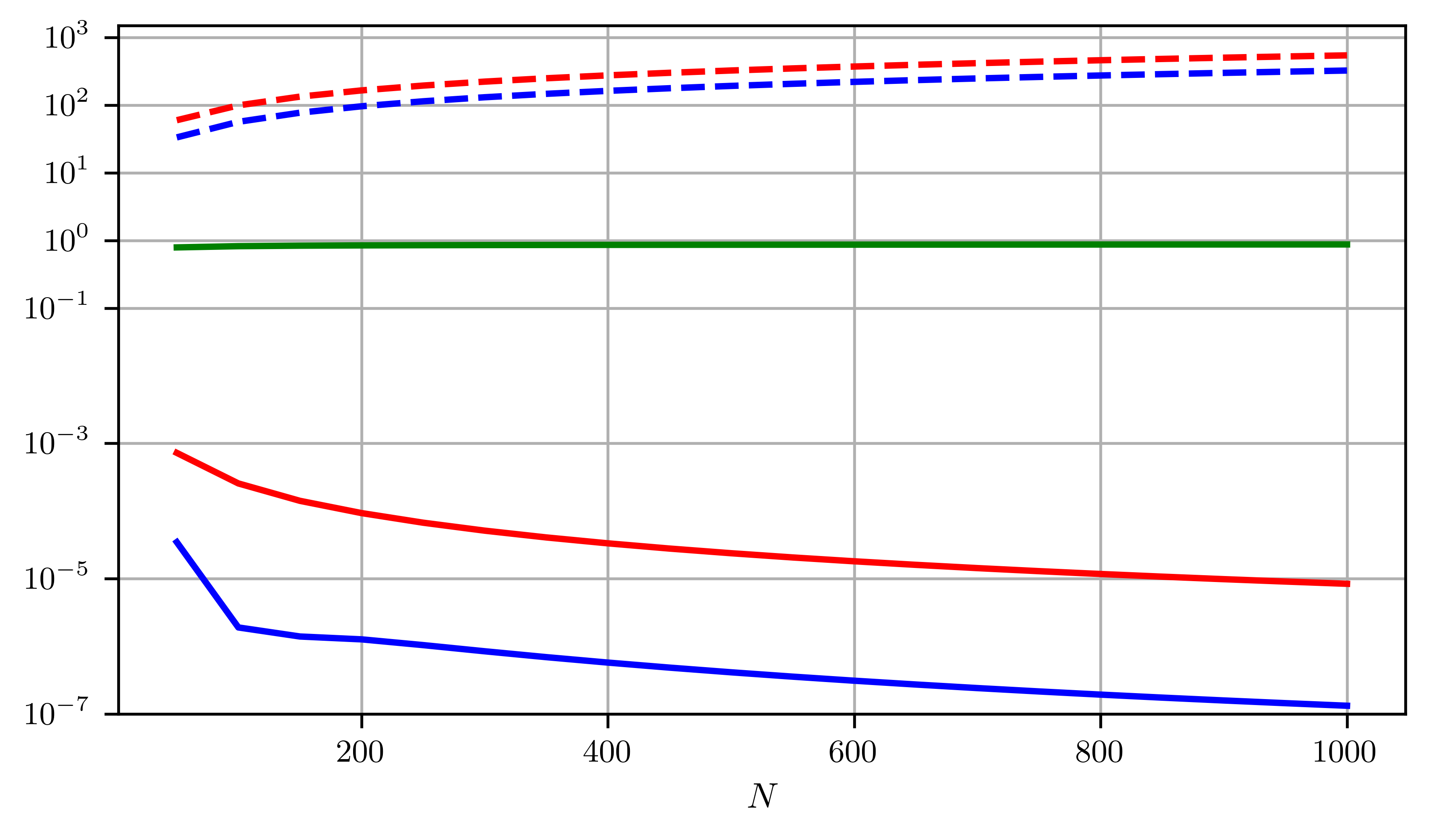}
    \caption{Comparison at the time $T=1$ starting from vacuum of the estimator $\xi_S(T)$ (dotted lines) with the differences between successive truncations of same parity (full lines) and between successive truncations of different parity (green line). More precisely, in red dotted lines is $\xi_S(T)$ for $\psi_{6N}$, while in blue dotted lines is $\xi_S(T)$ for $\psi_{6N+3}$. The full red lines are $\|\ket{\psi}_{6N}(T)-\ket{\psi}_{6(N + 1)}(T)\|$ and the full blue lines are $\|\ket{\psi}_{6N+3}(T)-\ket{\psi}_{6(N + 1)+3}(T)\|$. The green line is $\|\ket{\psi}_{6N}(T)-\ket{\psi}_{6N+3}(T)\|$.}
    \label{fig:fullwidth_example}
\end{figure}







\section{Numerical comparison with \cite{PRL_estimator}}
\label{sec:PRL_comp}
In this section we compare numerically our spatial-truncation estimator with the estimator derived in Woods et al.~\cite{PRL_estimator}. The test problem in~\cite{PRL_estimator} is a two-level system coupled to a bosonic bath modelled as a finite chain of $L$ linearly coupled harmonic oscillators. The total Hamiltonian reads
\begin{align*}
    \mathbf{H} &= -\Delta \tfrac{1}{2}\boldsymbol{\sigma}_x + \omega_c \sqrt{\tfrac{2\alpha}{s+1}}\,\tfrac{1}{2}\boldsymbol{\sigma}_z\,\mathbf{x}^0 + \mathbf{H}^B,
\end{align*}
with the bath Hamiltonian
\begin{align*}
    \mathbf{H}^B = \tfrac{1}{2}\sum_{i,j=0}^{L-1} \bigl(\mathbf{x}^i X_{ij} \mathbf{x}^j + \mathbf{p}^i P_{ij} \mathbf{p}^j\bigr),
\end{align*}
where $\mathbf{x}^i$ and $\mathbf{p}^i$ are the position and momentum operators of the $i$-th oscillator. We use the particle mapping (so that $X_{ij}=P_{ij}$), where the tridiagonal coefficients are
\begin{align*}
X_{i+1,i+1} &= \tfrac{\omega_c}{2}\Bigl(1 + \tfrac{s^2}{(s+2i)(s+2i+2)}\Bigr), \quad i=0,\dots,L-2, \\
X_{i,i+1} &= X_{i+1,i}= \omega_c\,\tfrac{(i+1)(i+1+s)}{(s+2i+2)(s+2i+3)}\sqrt{\tfrac{s+2i+3}{s+2i+1}},\quad i=0,\dots,L-2.
\end{align*}
As in Fig. 2 of~\cite{PRL_estimator}, we fix the model parameters to $\alpha=0.8$, $s=3$ and $\omega_c=\Delta=1$. The initial state is
\begin{equation*}
    \ket{\psi(0)} = \ket{\uparrow} \otimes {\ket{0}}^{\otimes L},
\end{equation*}
where $\ket{\uparrow}$ is the $+1$ eigenstate of $\boldsymbol{\sigma}_z$.
Their estimator bounds the error made on the expectation value of any bounded observable $\mathbf{O}$ on the system when truncating the Hilbert space of each oscillator to its first $m$ levels. Our estimator bounds the trace norm error between the exact solution on the full Hilbert space and the solution on the truncated space, i.e., system and truncated bath. Hence, our estimator, which by duality bounds the error on the expectation value of any bounded observable $\mathbf{O}$ on the total system, also bounds the error on the expectation value of any bounded observable on the system alone. That is, the error their estimator addresses is always encompassed by the (potentially larger) error our estimator bounds. Nevertheless, we observe that our estimator has better accuracy on this example than that of~\cite{PRL_estimator} as shown in~\cref{fig:woods_comparison}.
\begin{figure}[!htbp]
    \centering
    \begin{minipage}[b]{0.49\textwidth}
        \centering
        \includegraphics[width=\linewidth]{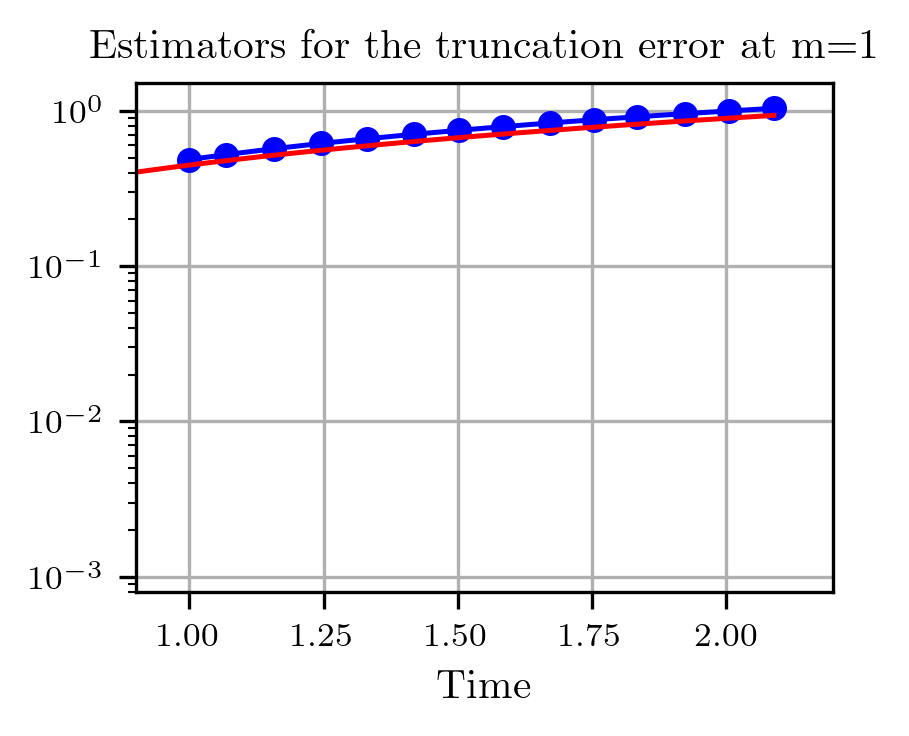}
    \end{minipage}  
    \begin{minipage}[b]{0.49\textwidth}
        \centering
        \includegraphics[width=\linewidth]{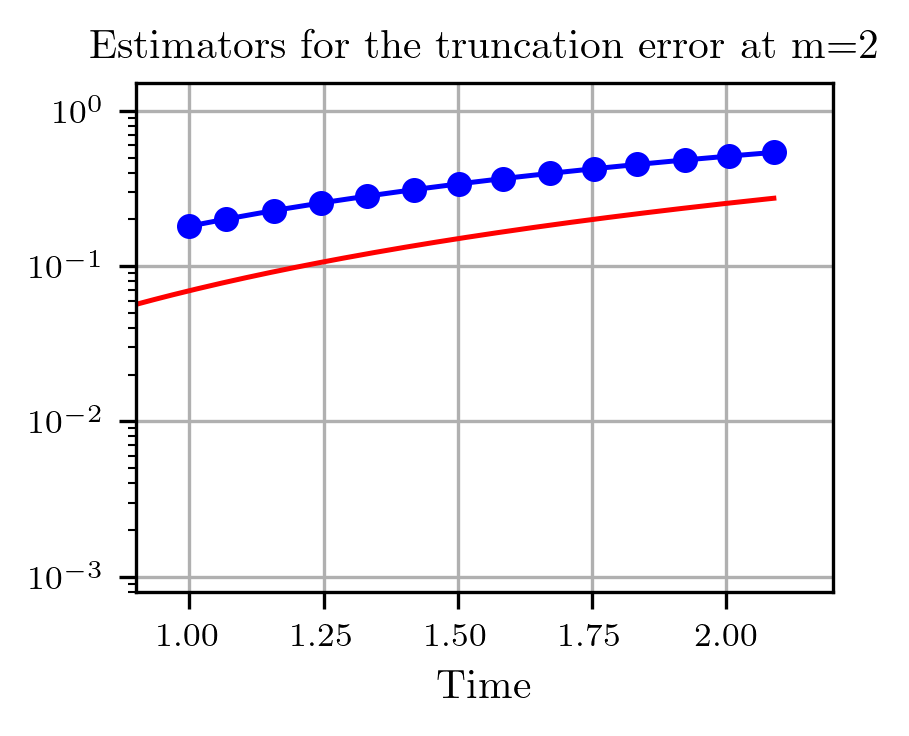}
    \end{minipage}

    \begin{minipage}[b]{0.49\textwidth}
        \centering
        \includegraphics[width=\linewidth]{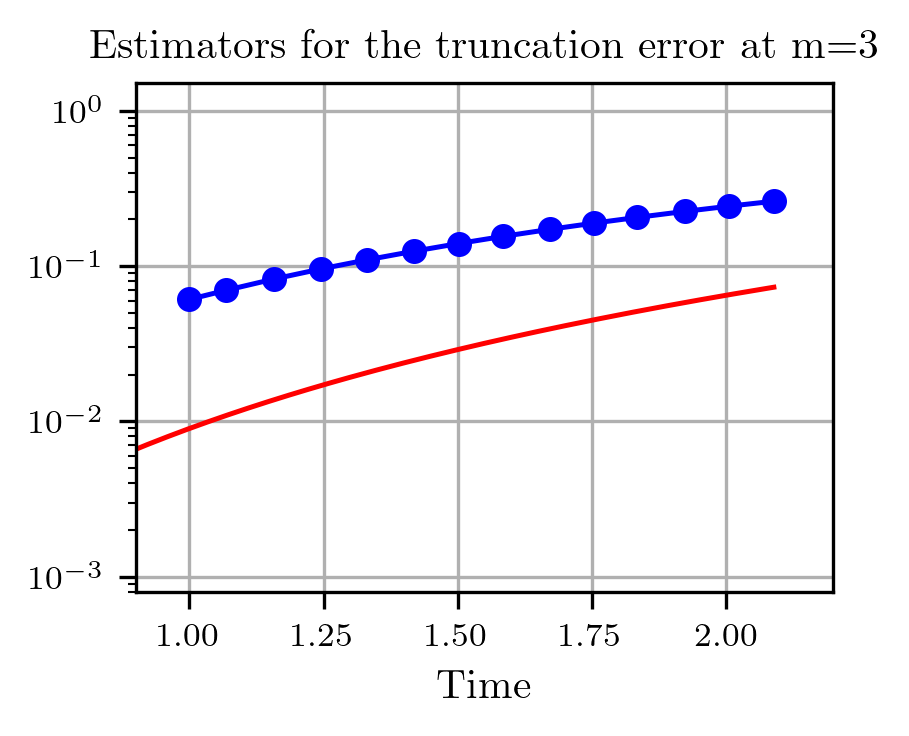}
    \end{minipage}
    \begin{minipage}[b]{0.49\textwidth}
        \centering 
        \includegraphics[width=\linewidth]{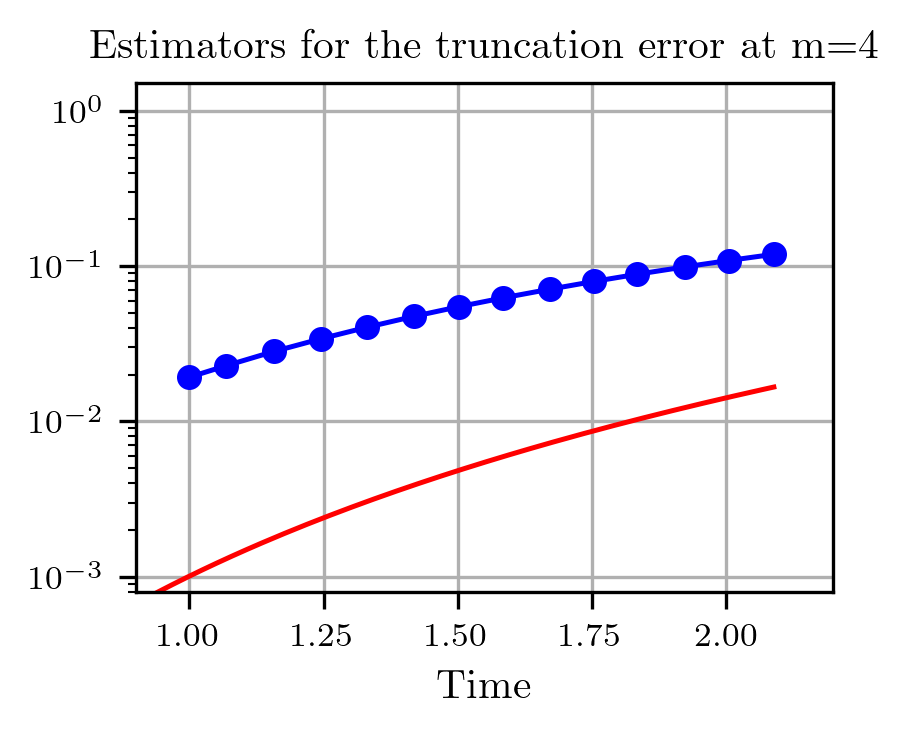}
    \end{minipage}
    \caption{Plots of the estimators obtained from~\cite{PRL_estimator}[Figure 2] in blue compared to our work, in red, at various truncations $m$ and for $L=3$ linearly coupled harmonic oscillators as described in the main text of \cref{sec:PRL_comp}.}
    \label{fig:woods_comparison}
\end{figure}

\section{Technical computations of \cref{ex:cat}}

\label{eq:cat}
First we want to simplify the expression of $(\cD_\LL-\cD_{\LL_N})(\boldsymbol{\rho}_N)$ for $\LL = \destroy^2- \alpha^2 \Id$. We decompose the three terms of the dissipator
\begin{align}
    \LL \boldsymbol{\rho}_N \LL^\dag-\LL_N \boldsymbol{\rho}_N \LL_N^\dag&=
    (\LL-\LL_N)\boldsymbol{\rho}_N(\LL^\dag-\LL_N^\dag)+ \LL_N\boldsymbol{\rho}_N(\LL^\dag-\LL_N^\dag)+(\LL-\LL_N)\boldsymbol{\rho}_N\LL_N^\dag,\\
    \LL^\dag \LL \boldsymbol{\rho}_N- \LL_N^\dag \LL_N \boldsymbol{\rho}_N&=
    (\LL^\dag-\LL_N^\dag) (\LL-\LL_N)\boldsymbol{\rho}_N+ (\LL^\dag-\LL_N^\dag) \LL_N\boldsymbol{\rho}_N+\LL_N^\dag (\LL-\LL_N)\boldsymbol{\rho}_N,\\
    \boldsymbol{\rho}_N \LL^\dag \LL-\boldsymbol{\rho}_N  \LL_N^\dag \LL_N &=(\LL^\dag \LL \boldsymbol{\rho}_N- \LL_N^\dag \LL_N \boldsymbol{\rho}_N)^\dagger .
\end{align}
For $\LL = \destroy^2- \alpha^2 \Id$ we also have:
\begin{align}
    \notag (\LL-\LL_N)\Proj{N}&=((\destroy^2-\alpha^2\Id)-(\Proj{N}\alpha^2\Id\Proj{N}-\Proj{N}\alpha^2\Id\Proj{N}))\Proj{N},\\
    &=0, \qquad \text{using } \Proj{N} \destroy=\destroy_N.
\end{align}
Thus, we obtain
\begin{align}
    \LL \boldsymbol{\rho}_N \LL^\dag-\LL_N \boldsymbol{\rho}_N \LL_N^\dag&=0,\\
    \LL^\dag \LL \boldsymbol{\rho}_N- \LL_N^\dag \LL_N \boldsymbol{\rho}_N&=(\LL^\dag-\LL_N^\dag) \LL_N\boldsymbol{\rho}_N=(\boldsymbol{\rho}_N \LL_N^\dag (\LL-\LL_N))^\dagger,\\
    \boldsymbol{\rho}_N \LL^\dag \LL-\boldsymbol{\rho}_N  \LL_N^\dag \LL_N &=\boldsymbol{\rho}_N \LL_N^\dag (\LL-\LL_N).
\end{align}
Using $\LL = \destroy^2- \alpha^2 \Id$, one has
\begin{align}
    \notag \LL_N^\dag (\LL-\LL_N)&=(\Proj{N} \create2\Proj{N} - \Proj{N}\alpha^2\Id\Proj{N})((\destroy^2 - \alpha^2\Id) - (\Proj{N} \destroy^2\Proj{N} - \Proj{N}\alpha^2\Id\Proj{N}))\\
    \notag &=-\Proj{N} \create2 \Proj{N} \destroy^2 \Proj{N} + \Proj{N} \create2 \Proj{N} \alpha^2\Id \Proj{N} + \Proj{N} \create2 \Proj{N} \destroy^2 - \Proj{N} \create2 \Proj{N} \alpha^2\Id\\
    \notag & +  \Proj{N}\alpha^2\Id\Proj{N} \destroy^2\Proj{N} - \Proj{N}\alpha^2\Id\Proj{N}\alpha^2\Id\Proj{N} - \Proj{N}\alpha^2\Id\Proj{N} \destroy^2 + \Proj{N}\alpha^2\Id\Proj{N}\alpha^2\Id\\
    &=-\alpha^2 \sqrt{N(N+1)}\ket{N-1}\bra{N+1}-\alpha^2 \sqrt{(N+1)(N+2)}\ket{N}\bra{N+2}.
\end{align}
Hence, 
\begin{align}
    \notag (\cD_\LL-\cD_{\LL_N})(\boldsymbol{\rho}_N)&=  -\frac{\alpha^2}{2}  \Big( \sqrt{(N\!+\!1)(N\!+\!2)} \notag\\
    &\quad\times \big( \ket{N\!+\!2}\bra{N} \rN(t) + \rN(t) \ket{N}\bra{N\!+\!2} \big)  \\
    & + \sqrt{N(N\!+\!1)} \big( \ket{N\!+\!1}\bra{N\!-\!1} \rN(t) \notag\\
    &\quad + \rN(t) \ket{N\!-\!1}\bra{N\!+\!1} \big) \Big).
\end{align}
Next, $(\cD_\LL-\cD_{\LL_N})(\boldsymbol{\rho}_N)$ is an operator of rank lower than $4$, as it is supported on
\[
\{ \rN \ket{N},\; \rN \ket{N-1},\; \ket{N+1},\;\ket{N+2}\}
\]
and one can efficiently compute numerically its trace norm.
The square of this operator is:
\begin{align}
    \notag ((\cD_\LL-\cD_{\LL_N})(\boldsymbol{\rho}_N))^2=&  \frac{\alpha^4}{4}  \big((N\!+\!1)(N\!+\!2) \big( \ket{N\!+\!2}\bra{N} \rN^2(t)\ket{N}\bra{N\!+\!2} \notag\\
    &\quad + \rN(t)\ket{N}\bra{N}\rN(t)\big)\\
    &+ N(N\!+\!1) \big( \ket{N\!+\!1}\bra{N\!-\!1} \boldsymbol{\rho}^2_N(t)\ket{N\!-\!1}\bra{N\!+\!1}\notag\\
    &\quad + \rN(t) \ket{N\!-\!1}\bra{N\!-\!1} \rN(t)\big)\big).
\end{align}
It remains to compute the following part, expressed in the decomposition $\xH= \xH_N \oplus \xH_N^\perp$:
\begin{align}
    \notag &\|(\cD_\LL-\cD_{\LL_N})(\boldsymbol{\rho}_N)\|_1 = \sqrt{N+1}\frac{\alpha^2}{2} \\
    &\times \xtr{\sqrt{ \left(
        \begin{array}{cc}
            \begin{aligned}
                &N\rN \ket{N\!-\!1}\bra{N\!-\!1} \rN\\
                & + (N\!+\!2) \rN\ket{N}\bra{N}\rN
            \end{aligned}
            & 0 \\
        0
        &\begin{aligned}
            &(N\!+\!2) \ket{N\!+\!2}\bra{N} \rN^2\ket{N}\bra{N\!+\!2}\\
            & + N \ket{N\!+\!1}\bra{N\!-\!1} \boldsymbol{\rho}^2_N\ket{N\!-\!1}\bra{N\!+\!1}
        \end{aligned}
        \end{array} \right)}
    }.
\end{align}
It is again block diagonal so we can simplify it as follows
\begin{align}
    \notag &\|(\cD_\LL-\cD_{\LL_N})(\boldsymbol{\rho}_N)\|_1 = \sqrt{N\!+\!1}\frac{\alpha^2}{2}\Biggl(\xtr{\sqrt{
        \begin{aligned}
            &N\rN \ket{N\!-\!1}\bra{N\!-\!1} \rN\\
            & + (N\!+\!2) \rN\ket{N}\bra{N}\rN
        \end{aligned}}
        }\\
    \notag & + \xtr{\sqrt{
        \begin{aligned}
            &(N\!+\!2) \ket{N\!+\!2}\bra{N} \rN^2\ket{N}\bra{N\!+\!2}\\
            & + N \ket{N\!+\!1}\bra{N\!-\!1} \boldsymbol{\rho}^2_N\ket{N\!-\!1}\bra{N\!+\!1}
        \end{aligned}
    }}\Biggr)\\
    \notag &=\sqrt{N+1}\frac{\alpha^2}{2} \Biggl(\xtr{\sqrt{\rN \Big(\sqrt{N}\ket{N-1}\bra{N-1} + \sqrt{N+2}\ket{N}\bra{N}\Big)\rN}}\\
    & + \Big(\sqrt{N}\bra{N-1}\rN^2\ket{N-1} + \sqrt{N+2}\bra{N}\rN^2\ket{N}\Big) \Biggr).
\end{align}

\printbibliography
\end{document}